\newtheorem{theorem}{Theorem}[section]
\newtheorem{corollary}[theorem]{Corollary}
\newtheorem{lemma}[theorem]{Lemma}
\newtheorem{proposition}[theorem]{Proposition}
\par\noindent{\bf Proposition \ref{res:hiper}.}\!\!
\par\noindent{\bf Theorem \ref{result43}.}\!\!
\par\noindent{\it Sketch of the proof}.  
\hfill\linebreak[2]\hspace*{\fill}$\circlearrowleft$}
\par\noindent{\it Proof of Proposition }\ref{prop:stab:smc}.  
\hfill\linebreak[2]\hspace*{\fill}$\circlearrowleft$}
\par\noindent{\it Proof of Propositions }\ref{adap:mon}{\it and }\ref{simult:adap}.\!\!\!
\hfill\linebreak[2]\hspace*{\fill}$\circlearrowleft$}
\theoremstyle{definition}
	\newtheorem{definition}[theorem]{Definition}
	\newtheorem{remark}[theorem]{Remark}
	\newtheorem{example}[theorem]{Example}
	\newtheorem{parrafo}[theorem]{{\!}}}
\numberwithin{equation}{section}
	\DeclareMathOperator{\ord}{ord}
	\DeclareMathOperator{\Spec}{Spec}
	\DeclareMathOperator{\MaxSpec}{MaxSpec}
	\DeclareMathOperator{\Hord}{H-ord}
	\DeclareMathOperator{\Slaux}{-sl}
	\DeclareMathOperator{\In}{In}
	\DeclareMathOperator{\Gr}{Gr}
	\DeclareMathOperator{\wwin}{w-in} 
	\newcommand{\q}{{\mathfrak q}}
	\newcommand{\p}{{\mathfrak p}}
	\newcommand{\m}{{\mathfrak {m}}}
	\newcommand{\SSl}{\mathcal{S}\!\Slaux}
	\DeclareMathAlphabet{\mathpzc}{OT1}{pzc}{m}{it}
	\newcommand{\nub}{\overline{\nu}}
	\title{On some properties of the asymptotic Samuel function}
	\author{A. Bravo,  S. Encinas, J. Guill\'an-Rial}
\thanks{
The first two authors were partially supported by PID2022-138916NB-I00 funded by MCIN/AEI/10.13039/501100011033 and by ERDF A way of making Europe;
The first and third authors  were partially  supported from the Spanish Ministry of Science and Innovation, through the ``Severo Ochoa'' Programme for Centres of Excellence in R\&D (CEX2019-000904-S).
The first author was partially supported by the Madrid Government (Comunidad de Madrid - Spain) under the multiannual Agreement with UAM in the line for the Excellence of the University Research Staff in the context of the V PRICIT (Regional Programme of Research and Technological Innovation) 2022-2024. The third author was supported by the Spanish State Research Agency, through the Severo Ochoa and Mar\'ia de Maeztu Program for Centers and Units of Excellence in R\&D (CEX2020-001084-M). The third author also thanks CERCA Programme/Generalitat de Catalunya for institutional support.}
	\keywords{Integral Closure; Asymptotic Samuel Function.}
	\subjclass[2010]{13B22, 13H15}
			\noindent\textsc{Depto. \'Algebra, An\'alisis Matem\'atico, Geometr\'{\i}a y Topolog\'{\i}a, and IMUVA, Instituto de Matem\'aticas.	Universidad de Valladolid, Campus Miguel Delibes, 47011 Valladolid, Spain} \newline
			\noindent\textsc{Centre de Recerca Matem\`atica, Edifici C, Campus Bellaterra, 08193 Bellaterra, Spain} \newline
\begin{document}
		
\begin{abstract}
The asymptotic Samuel function generalizes to arbitrary rings the usual order function of a regular local   ring. Here we explore some natural properties in the context of excellent, equidimensional rings containing a field. In addition, we establish some results regarding the  Samuel slope of a local ring. This is an invariant related with algorithmic resolution of singularities of algebraic varieties. Among other results, we study its behavior after certain faithfully flat extensions. 
\end{abstract}
		
		\maketitle		
		%\tableofcontents

\section{Introduction}

Let $X$ be an algebraic variety over a field $k$. To give a constructive resolution of singularities of $X$ means to describe a procedure to construct the centers of a finite sequence of blow ups at regular centers, 
\begin{equation}
	\label{resolucion} 
	X_0=X \leftarrow X_1\leftarrow \ldots \leftarrow X_n
\end{equation}
so that $X_n$ is regular. This is usually accomplished (when known to exist) by defining some upper-continuous functions 
$$\Gamma_i: X_i\to (\Lambda, \leq),$$
where $(\Lambda, \leq)$ is some well ordered set, and where the   maximum value of $\Gamma_i$  determines the center of the monoidal transformation   $\pi_i: X_i\to X_{i-1}$, for $i=1,\ldots,n$. These {\em resolution functions} also provide us with a criterion to determine that the variety $X_{i+1}$ is {\em less singular} than $X_i$. 

\medskip

The construction of the functions  $\Gamma_i$ is somehow  involved, and yet, it  is strongly supported on the usual order function that one defines in a regular local ring. Furthermore,   it vastly exploits the nice properties of the  order function when defined in a smooth scheme of finite type over a perfect field. See for instance  the approach to resolution followed in \cite{EnVi} where this fact becomes quite evident.   

\medskip

\noindent{\bf Some properties of the order function in regular rings that play a key role in resolution} 

\medskip

Let $S$ be a regular ring and let $\mathfrak{q}\subset S$ be a prime ideal. 
The usual order of an element $s\in S$ at ${\mathfrak q}$ is
$$\nu_{\mathfrak{q}S_{\mathfrak{q}}}(s):=\max \{\ell: s\in {\mathfrak q}^{\ell}S_{\mathfrak{q}} \}.$$

{\bf (A)} The function  $\nu_{\mathfrak{q}S_{\mathfrak{q}}}$ is a valuation, and therefore for 
$a,b\in S$,
$\nu_{\mathfrak{q}S_{\mathfrak{q}}}(ab)=\nu_{\mathfrak{q}S_{\mathfrak{q}}}(a)+\nu_{\mathfrak{q}S_{\mathfrak{q}}}(b)$.

\medskip

{\bf (B)}  When $S$ is essentially of finite type over a perfect field $k$, for a fixed $s\in S$ the  function
$$\begin{array}{rrcl}
	\nu(s): & \Spec(S) & \longrightarrow & \mathbb{N}\cup\{\infty\} \\
	& \mathfrak{q} & \mapsto & \nu_{\mathfrak{q}S_{\mathfrak{q}}}(s)
\end{array}$$ $\nu(s)$ is upper semicontinuous. In particular,
for $\mathfrak{q}_1\subset\mathfrak{q}_2$
\begin{equation}
	\label{desigualdad_orden} 
	\nu_{{\mathfrak q}_1S_{\mathfrak{q}_1}}(s)\leq
	\nu_{{\mathfrak q}_2S_{\mathfrak{q}_2}}(s).
\end{equation}
Actually, the inequality (\ref{desigualdad_orden}) holds  for regular rings in general   (see \cite{DDGH}), and it can also be read in terms of the symbolic powers of ${\mathfrak q}_i$, namely, for all $\ell\in {\mathbb N}$, 
\begin{equation}
	\label{inclusion_potencias_simbolicas} 
	{\mathfrak q}_1^{(\ell)}\subseteq {\mathfrak q}_2^{(\ell)}. 
\end{equation}

{\bf (C)} When $S$ contains a field, and ${\mathfrak q}$ defines a regular subscheme in $\Spec(S)$, i.e., if  $S/{\mathfrak q}$ is regular, then  the ordinary and the symbolic powers of ${\mathfrak q}$ coincide,   
\begin{equation}
	\label{igualdad_symbolic_ordinary} 
	{\mathfrak q}^{(\ell)}={\mathfrak q}^\ell,
\end{equation}  
or in other words, 
for all $s\in S$,  
\begin{equation}
	\label{igualdad_primo_regular}
	\nu_{\mathfrak q}(s)=\nu_{{\mathfrak q}S_{\mathfrak q}}(s).
\end{equation} 
  This last property plays a special role, for instance,  to control the transforms of the resolution invariants after each of  the blow ups at the regular centers  in sequence (\ref{resolucion}).   

\medskip

\noindent{\bf The order function is used to define the resolution functions}

\medskip

Let us start by considering a special case. Let $S$ be a smooth  $k$-algebra of finite type over a perfect field $k$,  let $f(Z)\in S[Z]$ be a polynomial defining a hypersurface $X$ of maximum multiplicity $m>1$ in $\Spec(S[Z])$, and suppose we can write, 
\begin{equation}
	\label{polinomio} 
	f(Z)=Z^{m}+a_1Z^{m-1}+\ldots+a_m.
\end{equation}
  Already the order   stratifies the singularities of $f$ into locally closed strata.  Thus, to approach a resolution of $X$, one may think that the problem can be  reduced to  lowering the maximum order of a strict transform of $f$ below $m$. This is usually referred to as {\em resolving the pair $(\langle f\rangle, m)$}.
However,  just this information might not be enough to construct the resolution function $\Gamma: X\to (\Lambda, \geq)$. In particular, resolving the pair $(\langle f\rangle, m)$ requires the definition of new   functions, usually referred as    {\em resolution invariants}.  And, again, the main  source to defining them  relies, in one way or another, on  some order function of a  suitably defined local regular ring.  Thus,   $\Gamma$   would look something like this: 
\begin{equation}
	\label{funcion_resolucion}
\begin{array}{rrcl} 
	\Gamma: & X & \longrightarrow & \Lambda:= {\mathbb N}\times {\mathbb Q}_{>0}\times \ldots \times {\mathbb Q}_{>0}\\
	& \xi & \mapsto & (m, h_1(\xi), h_2(\xi),\, \dots\, ,  h_{\ell}(\xi)), 
\end{array}	
\end{equation}
where the set $\Lambda$ is ordered lexicographically. In particular this  means that if $\xi, \eta\in X$ are two points with the same multiplicity and if $\xi\in \overline{\eta}$, then, 
\begin{equation}
	\label{comparacion_h_1}
	h_1(\eta)\leq h_1(\xi).
	\end{equation}  
	 The value of $h_1(\xi)$ is the {\em weighted-order} at $\xi$ of   some ideal $J\subset S$ that collects information    coming from the coefficients of $f$. There are different strategies to define $h_1$: the so called {\em $\delta$-invariant}  coming from  {\em Hironaka's polyhedron of the singularity},  the order of the {\em coefficient ideal of the pair $(\langle f\rangle, m)$}, or the order of an  {\em elimination algebra of $(\langle f\rangle, m)$}, $\ord^{(d)}_X$, or the function  $\Hord_X$,  among others (see \cite{Benito_V}, \cite{Benito_V_Compositio}, \cite{BVIndiana},  \cite{Br_V1}, \cite{CJS2020}, \cite{CJSch2019}, \cite{HironakaPoliedro}, \cite{Schober}, \cite{Villa_Advances}). The rest of the functions $h_i$, $i>1$, depend in some sense on the construction of $h_1$. 

\medskip

The previous example covers the case of a hypersurface, since the defining equation $f$ of a hypersurface $X$ can be assumed to have the form in (\ref{polinomio}) after choosing a suitable local  (\'etale) embedding in a neighborhood of a singular point. In addition, the case of an arbitrary variety can be  {\em reduced to the hypersurface case}, also, after considering a suitable local (\'etale) embedding (see   \cite{HironakaIdealistic}, or   \cite{V}). Thus our initial example already gives us a rough picture  of a procedure to construct the function  $\Gamma$.

\medskip 

Notice that the definition of   the resolution functions  strongly uses local-\'etale embeddings of $X$ into smooth ambient spaces, where the good properties of the usual order function come in handy.  As a counterpart, some (non-trivial) work has to be done in order to show   that the resolution functions are   independent of the   embeddings. This is needed to prove, for instance,    that  the centers to blow up in sequence (\ref{resolucion}), which  are   determined locally, patch as to define global centers on $X$ that ultimately lead   to  a resolution of singularities of $X$. 
And sometimes the use of the \'etale topology is not enough. For instance, the invariants provided by Hironaka's polyhedron are constructed at the   completion of a local regular ring  where the ideal of the variety is defined.  In this line, we should mention the works of Cossart-Piltant in  \cite{CosPil2015} and  Cossart-Schober in \cite{CosSch2021},  where  it is shown that to construct the Hironaka's polyhedron,  the completion can be avoided. 

\medskip

We can go one step further and   explore  properties of the local rings at the singular points of $X$ that allow us to collect information regarding the  resolution functions:  Can we avoid the use of a local embedding in a regular ring and get information directly from the singular local ring of a variety?

\medskip

When $\xi\in X$ is a singular point, it is still possible to consider the order function at ${\mathcal O}_{X,\xi}$, but  this does not behave very nicely. To start with, it is far from being  upper semicontinuous. 
A function that has a much nicer behavior is the {\em asymptotic Samuel function}. 

\medskip 

In this paper we study some properties of the asymptotic Samuel function which could be useful to understand resolution functions from an intrinsic point of view. In this sense, we approach two different problems. On the one hand, we explore the properties of the asymptotic Samuel function in comparison to the properties {\bf (A), (B)} and {\bf (C)} listed before. In particular we will see that {\bf (B)} and {\bf (C)} hold for equimultiple primes (compare to the inequality (\ref{comparacion_h_1})).  On the other hand, we continue the work started in \cite{BeBrEn}, where we used the asymptotic Samuel function to define  an invariant, the {\em Samuel slope} of a local ring,  which can be defined for any local Noetherian ring.  In \cite{BeBrEn} we showed that the Samuel slope is  connected to some resolution functions that appear naturally when working with algebraic varieties over perfect fields of prime characteristic. In particular, there seems to be a strong  connection between the Samuel slope of a singular point   and the value of the function $h_1$ mentioned above (see \cite{BeBrEn2}).  Here  we do not restrict to algebraic varieties and    explore further properties of this invariant in the wider context of excellent local equicharacteristic equidimensional rings, and, among other results, we prove inequalities in the line of (\ref{comparacion_h_1}) when comparing the Samuel slope at equimultiple primes. 

\bigskip

\noindent{\bf Some definitions and main results}

\medskip

\begin{definition}\label{definicion_Samuel} 
 	Let  $A$ be a Noetherian ring and let  $I\subset A$ be a proper ideal.  The {\em asymptotic Samuel function at $I$}, $\bar{\nu}_I:A\to\mathbb{R}\cup\{\infty\}$, is defined as: 
	\begin{equation}\label{Def_Sam_lim}
		\bar{\nu}_I(f)=\lim_{n\to\infty}\frac{\nu_I(f^n)}{n}, \qquad f\in A.
	\end{equation}
\end{definition}

This function was first introduced by Samuel in \cite{Samuel}, when studying the behavior of powers of ideals.  Afterwards,   Rees pursued the use of this function in \cite{Rees1955}, \cite{Rees_Local}  where it is shown that the limit exists, see also \cite[Lemma 6.9.2]{Hu_Sw},  \cite{Rees_Ideals}, \cite{Rees_Ideals_II}. If  $(A,{\mathfrak m})$ is a local regular ring, then $\nub_{\mathfrak m}$ is   the ordinary order function at the maximal ideal of $A$ (and then we write $\nu_{\mathfrak m}$). 

\medskip

The asymptotic Samuel function  measures how deep a given element lies into the integral closure of an ideal, i.e., 
\begin{equation}
	\label{nub_integral}
	\bar{\nu}_I(f)\geq \frac{n}{\ell} \Longleftrightarrow f^{\ell}\in\overline{I^n},
\end{equation}
see \cite[Corollary 6.9.1]{Hu_Sw}.   If      $I$ is  not contained in a minimal prime of $A$ and $\{v_1,\ldots,v_s\}$ is  a set of Rees valuations of  $I$, then
\begin{equation}
	\label{Samuel_valorativo}
	\bar{\nu}_I(f)=\min\left\{\frac{v_i(f)}{v_i(I)}\mid i=1,\ldots,s\right\}, 
\end{equation}
(see \cite[Lemma 10.1.5, Theorem 10.2.2]{Hu_Sw} and \cite[Proposition 2.2]{Irena}). Therefore, if $f\in A$ is not nilpotent, $\nub_I(f)\in {\mathbb Q}$.  

{\begin{remark} \label{properties_order_function} The asymptotic Samuel function is an order function. It can be checked that the following hold: 
		\begin{enumerate}
		\item[(i)] for $f,g\in A$, $\nub_I(f+g)\geq \min\{\nub_I(f), \nub_I(g)\}$, with equality if $\nub_I(f)\neq\nub_I(g)$;
			\item[(ii)]  for $f,g\in A$, $\nub_I(fg)\geq  \nub_I(f)+ \nub_I(g)$.
		\end{enumerate}
In addition, it is worthwhile noticing that for $f\in A$ and $\ell\in {\mathbb N}$, $\nub_I(f^{\ell})=\ell\cdot \nub_I(f)$. 		  
\end{remark}}
	
\medskip

Definition \ref{definicion_Samuel}  can be extended to the case in which arbitrary filtrations of ideals are considered. This  has been studied by Cutkosky and Praharaj in  \cite{CutPraj}.  On the other hand,  we refer to the work of Hickel in   \cite{Hickel13} for some results on the explicit computation of the asymptotic Samuel function on complete local rings. Some of these results will play a role in our arguments, and they will be precisely stated and properly referred in section \ref{seccion_finite_transversal}.  

\medskip

\begin{parrafo}
{\bf Properties (A), (B), (C)} 
\end{parrafo}

In the following lines we will revisit properties \textbf{(A)},$\,$\textbf{(B)} and \textbf{(C)} in the case of the asymptotic Samuel function.
\medskip

{\bf (A)} In general, the asymptotic Samuel function is not a valuation and it is not hard to find examples.
\begin{example}
Let $k$ be a field.
Consider the ring $B=k[x,y,z]/\langle xy-z^3\rangle$, with maximal ideal 
${\mathfrak m}=\langle \overline{x},\overline{y},\overline{z}\rangle$. 
We have that:
             
$$\nub_{\mathfrak m}(\overline{x})=\nub_{\mathfrak m}(\overline{y})=\nub_{\mathfrak m}(\overline{z})=1;$$
$$\nub_{\mathfrak m}(\overline{xy})=3; \  \  \nub_{\mathfrak m}(\overline{xz})=2;  \  \ \nub_{\mathfrak m}(\overline{yz})=2.$$ 
\end{example}

Here, it can be checked that $\langle \overline{x},\overline{y} \rangle$ is not a reduction of $\m$. However, there are minimal reductions of $\m$ which contain the element $\overline{z}$. In fact,  
under some assumptions, using minimal reductions of $\m$, one can identify a regular subring of $B$
where the restriction of $\nub_{\mathfrak{m}}$ behaves as a valuation.
The following result clarifies what is going on in the previous example, in fact,
a little bit more can be stated:
\medskip

\noindent \textbf{Proposition \ref{MiniMax}}
{\em Let $(B,{\mathfrak m})$ be an equidimensional excellent equicharacteristic local ring of dimension $d\geq 1$ containing  a field $k$. Suppose ${\mathfrak m}$ has a reduction generated by $d$ elements, $y_1,\ldots, y_d\in {\mathfrak m}$. Set $A:=k[y_1,\ldots, y_d]_{\langle y_1,\ldots, y_d\rangle}\subset B$.
Then, for $a\in A$ and $b\in B$, 
	$$\nub_{\mathfrak m}(ab)=\nub_{\mathfrak m}(a)+  \nub_{\mathfrak m}(b).$$
}

\medskip

{\bf (B), (C)} Let $B$ be a  non-necessarily regular ring. For our discussions,  the case $\dim(B)=0$ can be left out, thus through the paper we will be assuming that $\dim(B)\geq 1$.  For a prime ideal  ${\mathfrak p}\subset B$, using  the asymptotic Samuel function we can  define the following filtrations:  for $r\in {\mathbb Q}_{>0}$,   
$${\mathfrak p}^{\geq r} :=\{b\in B: \nub_{\mathfrak p}(b)\geq r\}, \qquad {\mathfrak p}^{(\geq r)} :=\{b\in B: \nub_{{\mathfrak p}B_{\mathfrak p}}(b)\geq  r\}.$$

In general, we do not expect that  properties {\bf (B)} and {\bf (C)} hold: 
\begin{example}
	Let $k$ be a field, and  let $B=\left(k[x,y,z]/\langle y^2 + zx^3\rangle\right)_{\m}$, where ${\mathfrak m}=\langle \overline{x},\overline{y},\overline{z}\rangle$. Set ${\mathfrak p}=\langle \overline{y},\overline{z} \rangle$. Notice that 	
	$$\nub_{\mathfrak m}(\overline{z})=1 \leq 	  \nub_{{\mathfrak p}B_{\mathfrak p}}(\overline{z})=2.$$ \\
	In addition,  ${\mathfrak p}$ defines a regular prime in $B$, i.e. $B/\mathfrak{p}$ is regular,  however, 
	$$1=\nub_{\mathfrak p}(\overline{z})< \nub_{{\mathfrak p}B_{\mathfrak p}}(\overline{z})=2.$$ 
\end{example}
As indicated before,  we will see that, to expect similar properties as in the regular case, we have to   restrict  to  primes with the same multiplicity. In other words, the asymptotic Samuel function {\em behaves as expected} when restricted to a  (locally closed) stratum   of constant multiplicity  of $\Spec(B)$.   
 
 \medskip 
 
To fix notation, for a prime ${\mathfrak p}$  in $\Spec({B})$ and for a ${\mathfrak p}$-primary ideal, ${\mathfrak a}\subset B$, we will use $e_{B_{\mathfrak p}}({\mathfrak a}B_{\mathfrak p})$ to denote the multiplicity of the local ring $B_{\mathfrak p}$ at  ${\mathfrak a}B_{\mathfrak p}$. Now properties {\bf (B)} and {\bf (C)} have the following reformulation in the context of singular rings: 

\medskip

\noindent{\bf Theorem \ref{casi_semi_continuo}}
{\em Let $B$ be an equidimensional excellent ring containing a field.
Let ${\mathfrak p}_1\subset{\mathfrak p}_2\subset B$ be two prime ideals such that   $e_{B_{\mathfrak{p}_1}}({\mathfrak p}_1B_{\mathfrak{p}_1})=
e_{B_{\mathfrak{p}_2}}({\mathfrak p}_2B_{\mathfrak{p}_2})$.
Then $\nub_{{\mathfrak p}_1B_{\mathfrak{p}_1}}(b)\leq
\nub_{{\mathfrak p}_2B_{\mathfrak{p}_2}}(b)$ for $b\in B$. }

\medskip

In particular, this says that for ${\mathfrak p}_1\subset{\mathfrak p}_2$ as in the theorem, and $r\in\mathbb{Q}_{>0}$,
$${\mathfrak p}_1^{(\geq r)}\subseteq {\mathfrak p}_2^{(\geq r)}.$$

\medskip 	

\noindent {\bf Theorem \ref{casi_semi_continuo_sin_localizar}}
{\em  Let $B$ be an equidimensional excellent ring containing a field.
Let ${\mathfrak p}\subset B$ be a prime in the top multiplicity locus of $B$,
and assume that $B/{\mathfrak p}$ is regular. Then 
$\nub_{\mathfrak p}(b)=\nub_{{\mathfrak p}B_{\mathfrak p}}(b)$ for $b\in B$. }

\medskip

Hence, in particular, for ${\mathfrak p}$ as in the theorem,
and  $r\in\mathbb{Q}_{>0}$,
$${\mathfrak p}^{(\geq r)}={\mathfrak p}^{\geq r}.$$

\medskip

To conclude this part,  if $(S,\mathfrak{n})$ is a regular local ring, the usual order induces the natural filtration $\{{\mathfrak n}^{\ell}\}_{\ell \in {\mathbb N}}$, where
$${\mathfrak n}^{\ell}=\{s\in S: \nu_{\mathfrak n}(s)\geq \ell\},$$
which in turns leads to the usual graded ring 
$\text{Gr}_{\mathfrak n}(S)=\bigoplus\limits_{\ell \in {\mathbb N}}{\mathfrak n}^{\ell}/{\mathfrak n}^{\ell+1}$ that  is graded over ${\mathbb N}$ and finitely generated over $S/{\mathfrak n}$. For an arbitrary   local ring $(B,\mathfrak{m},k)$,
we can consider the graded ring associated to the filtration induced by the asymptotic Samuel function: setting 
		${\mathfrak m}^{>r}:=\{b\in B: \nub_{\mathfrak m}(b)>r\}$, define
		 $$\overline{\Gr}_{\mathfrak m}(B):= \bigoplus_{r\in\mathbb{Q}_{\geq 0}}{\mathfrak m}^{\geq r}/{\mathfrak m}^{> r}.$$
	Notice that by (\ref{Samuel_valorativo}),  there is some integer $n\in {\mathbb N}$ so that $\overline{\Gr}_{\mathfrak m}(B)$ is graded over $\frac{1}{n}{\mathbb N}$. And actually $n$ can be taken as $m!$  if $m=e_{B_{\text{red}}}({\mathfrak m}_{\text{red}})$, where $B_{\text{red}}$ denotes $B/\text{Nil(B)}$ and ${\mathfrak m}_{\text{red}}={\mathfrak m}/\text{Nil}(B)$ (see Remark \ref{factorial}). Imposing some (mild) conditions  on $B$, we can show that $\overline{\Gr}_{\mathfrak m}(B)$ is a  $k$-algebra of finite type:

		\medskip 
		
		\noindent
{\bf Theorem \ref{graduado_barra_finito}}
{\em  Let $(B,{\mathfrak m},k)$ be an excellent local ring. Then $\overline{\Gr}_{\mathfrak m}(B)$ is a $k$-algebra of finite type.
 }

\begin{parrafo}\label{DefSamSlope}
{\bf The Samuel slope of a local ring}	 
\end{parrafo}

\noindent 
The notion of {\em Samuel slope of a local ring} was introduced in \cite{BeBrEn}. More precisely, for  a Noetherian local ring $(B, \mathfrak{m},k)$ of dimension $d$ we consider the natural map:
$$\lambda_{\mathfrak{m}}: {\mathfrak m}/ {\mathfrak m}^2 \to {\mathfrak m}^{\geq 1}/{\mathfrak m}^{> 1}. $$
  If  $(B, \mathfrak{m}, k)$ is not regular, then  $\ker(\lambda_{\mathfrak{m}})$   might be non trivial, and
 its  dimension as a $k$-vector space is an invariant of the ring. To start with, it can be proven that $\dim_k\left(\ker(\lambda_{\mathfrak{m}}) \right)$   is bounded above by the {\em excess of embedding dimension of $B$},
$\text{exc-emb-dim}(B)$, i.e., 
$$0\leq \dim_{k}\left(\ker(\lambda_{\mathfrak{m}})\right) \leq \text{exc-emb-dim}(B):= \dim_{k}\left({\mathfrak m}/ {\mathfrak m}^2\right)-\dim(B).$$
This follows from the fact that elements $a\in \ker(\lambda_{\mathfrak m})$ are nilpotent in $\text{Gr}_{\mathfrak m}(B)$.
\medskip

Local non-regular rings where the upper bound is not achieved seem to have milder singularities than the others, and in such case we say that  {\em Samuel slope of $B$, $\SSl(B)$},   is 1. 
\medskip

When $\dim_{k}(\ker(\lambda_{\mathfrak{m}})) =  \text{exc-emb-dim}(B)>0$  we say that  $B$ is  in the {\em extremal case}, and  we define  the Samuel slope   as follows.
\medskip

Set  $t=\text{exc-emb-dim}(B)>0$.
We   say that the elements $\gamma_1,\ldots, \gamma_t\in {\mathfrak m}$ form a  {\em $\lambda_{\mathfrak{m}}$-sequence} if their classes in
${\mathfrak m}/ {\mathfrak m}^2$ form a basis of $\ker(\lambda_{\mathfrak{m}})$.
Then, 
\begin{equation*}
\SSl(B):=
\sup\limits_{{\lambda_{\mathfrak{m}}}{\text{-sequence}}} \left\{
\min\left\{\bar{\nu}_{\mathfrak{m}}(\gamma_1),\ldots,\bar{\nu}_{\mathfrak{m}}(\gamma_t)\right\} 
\right\}\in\mathbb{R}_{\geq 0}\cup\{\infty\} ,
\end{equation*}
where the supremum is taken over all the $\lambda_{\mathfrak{m}}$-sequences of $B$. 

\medskip

Equivalently, $\SSl(B)$ can also defined in the  following way. Let $\mathbf{x}=\{x_1,\ldots,x_{d+t}\}\subset\mathfrak{m}$ be a minimal set of generators of $\mathfrak{m}$.
	We define the \emph{slope with respect to $\mathbf{x}$}  as
	$$\text{sl}_{\mathbf{x}}(B):=\min\{\nub_{\mathfrak{m}}(x_{d+1}),\ldots,\nub_{\mathfrak{m}}(x_{d+t})\}.$$
	And then,   
	\begin{equation*}
		\SSl(B):=
		\sup\limits_{\mathbf{x}}\text{sl}_{\mathbf{x}}(B)=
		\sup\limits_{\mathbf{x}} \left\{
		\min\left\{\bar{\nu}_{\mathfrak{m}}(x_{d+1}),\ldots,\bar{\nu}_{\mathfrak{m}}(x_{d+t})\right\} 
		\right\},
	\end{equation*}
	where the supremum is taken over all possible minimal  ordered sets   of generators $\mathbf{x}$ of $\mathfrak{m}$. To conclude,  if $(B,{\mathfrak m})$ is regular we set $\SSl(B):=\infty$. 
\medskip 

\begin{example}
Let $k$ be a field of characteristic 2.
Set $B=\left(k[x,y]/\langle x^2+y^4+y^5 \rangle\right)_{\mathfrak{m}}$, where
$\mathfrak{m}=\langle \overline{x},\overline{y}\rangle$.
Then $(B,\mathfrak{m})$ is in the extremal case and
$\ker(\lambda_{\mathfrak{m}})=\langle \In_{\mathfrak{m}}(\overline{x})\rangle$.
Both $\{\overline{x}\}$ and $\{\overline{x}+\overline{y}^2\}$ are 
$\lambda_{\mathfrak{m}}$-sequences.
However $\nub_{\mathfrak{m}}(\overline{x})=2$, while 
$\nub_{\mathfrak{m}}(\overline{x}+\overline{y}^2)=5/2$.
In fact $\SSl(B)=5/2$, see Corollary \ref{noentero}.
\end{example}

Here we also study the Samuel slope in the wider setting of equicharacterictic equidimensional excellent local rings.  First of all, just from the definition, it is not clear that this invariant is finite in the case of non-regular rings. We show:
\medskip

\noindent {\bf Theorem \ref{main_theorem}} {\em  
Let $(B,{\mathfrak m},k)$ be a
non-regular reduced equicharacteristic equidimensional excellent local ring.  
Then $\SSl(B)\in\mathbb{Q}$.  
}

\medskip
Next we consider the case of non-reduced rings. Recall that the multiplicity induces the same stratification on both, $B$ and $B_{\text{red}}$. We show that both rings share the same Samuel slope.

\medskip

\noindent {\bf Theorem \ref{ThSlopeReduced}}
{\em Let $(B,{\mathfrak m},k)$ be a non-reduced equicharacteristic equidimensional  excellent local ring. Then $\SSl(B)=\SSl(B_{\text{\tiny{\rm red}}})$.  
}
\medskip

\noindent From here it follows that $\SSl(B)=\infty$ if and only if
$B_{\text{\tiny{\rm red}}}$ is a regular local ring (Corollary \ref{Pendienteinfinita}).
\medskip

Since the Samuel slope is an invariant of the local ring of a singularity, one would expect that it be preserved under \'etale extensions and completion. In \cite{BeBrEn} it was shown that  $(B',{\mathfrak m}')$ is a local \'etale extension of $(B,{\mathfrak m})$  with the same residue field, then $\SSl(B)=\SSl({B}')$. The argument given there also  shows that $\SSl(B)=\SSl(\widehat{B})$, where $\widehat{B}$ is the ${\mathfrak m}$-adic completion of $B$.  Here we treat the case of  arbitrary local \'etale extensions, which requires a different strategy. 

\medskip

\noindent{\bf Theorem \ref{ThSlopeEtale}} 	{\em  Let $(B,{\mathfrak m},k)$ be an  equicharacteristic equidimensional excellent local ring,   and let  $(B,{\mathfrak m})\to (B',{\mathfrak m}')$ be a local-\'etale extension. Then $\SSl(B)=\SSl(B')$.  }
\medskip 

To conclude, as we mentioned the Samuel slope of a local ring seems to be connected to the resolution invariant $h_1$ in (\ref{funcion_resolucion}) which has the following property:   if   ${\mathfrak p}\subset { \mathfrak m}$ is an equimultiple prime then 
$$h_1(\mathfrak p) \leq h_1({ \mathfrak m}).$$
In fact, when the characteristic is zero, then $h_1$ is upper semi-continuous when restricted to points with the same multiplicity.  For a Noetherian ring $B$ we can consider the function
$$\begin{array}{rrcl}
\SSl: & \Spec(B) & \longrightarrow & \mathbb{Q}\cup\{\infty\} \\
 & \mathfrak{p} & \mapsto & \SSl(B_{\mathfrak{p}}).
\end{array}$$
In general, $\SSl$ is not upper semicontinuous,  see Example \ref{Whitney}, but  it 
has the following nice property on the maximal spectrum, $\MaxSpec(B)$, of $B$:  
\medskip

\noindent{\bf Theorem \ref{ThSlopeSemicont}} {\em 
Let $B$ be an equidimensional equicharacteristic excellent ring   and let
$\mathfrak{p}\in\Spec(B)$.
Then there is a dense open set $U\subset\MaxSpec(B/\mathfrak{p})$ such that
$$\SSl(B_{\mathfrak p})\leq\SSl(B_{\mathfrak{m}})\qquad
\text{for all }\quad \mathfrak{m}/\mathfrak{p}\in U.$$
}

\medskip

The paper is organized as follows. One of the main ingredients in our proofs is the  use of the so called \textit{finite-transversal projections} together with Hickel's result on the computation of the asymptotic Samuel function. These are treated in section \ref{seccion_finite_transversal}, where we also address  Proposition \ref{MiniMax}. The proofs of Theorems \ref{casi_semi_continuo}, \ref{casi_semi_continuo_sin_localizar}, and
\ref{graduado_barra_finito} are addressed in section \ref{seccion_naturalprops}. The rest of the paper is dedicated to the Samuel slope of a local ring. Theorems \ref{main_theorem} and  \ref{ThSlopeReduced} are proved in section \ref{seccion_finitud}, while Theorem \ref{ThSlopeEtale} is proved in section \ref{SeccFFlatExt}. Finally, a proof of Theorem \ref{ThSlopeSemicont} is given in section \ref{seccion_comparing slopes at prime ideals}.
\medskip

{\em Acknowledgments.} We profited from conversations with A. Benito, C. del-Buey-de-Andr\'es and O. E. Villamayor. 
We also want to thank the referee for several useful suggestions and constructive 
criticism of the paper.

\section{Finite-transversal projections}\label{seccion_finite_transversal}

Finite-transversal projections were considered in \cite{V} for the construction of {\em local presentations of the multiplicity function }of algebraic varieties defined over a perfect field. The existence of such presentations implies that resolution of singularities of algebraic varieties can be achieved via successive simplifications of the multiplicity (in characteristic zero). Finite-transversal projections were further explored in \cite{COA}, where they were considered between (non-necessarily regular) algebraic varieties defined over perfect fields. Some other properties of such morphisms are discussed in \cite{BrEnHandbook}. In this section we treat this notion in a more general setting, dropping the assumption that the rings involved be  $k$-algebras of finite type.

\begin{definition} \label{DefTransv} %\label{DefIdealTransv}
	Let $S\subset B$ be a finite extension of excellent rings 
	with $S$ regular and $B$ equidimensional and reduced. Let $K$ be the fraction field of $S$ and let $L:=B\otimes_S K$. 
	Suppose that no non-zero element of $S$ is a zero divisor in $B$.
	We say that the projection $\Spec(B)\to\Spec(S)$ (or the extension $S\subset B$) is \emph{finite-transversal with respect to}
	${\mathfrak p}\in\Spec(B)$ if:
	$$e_{B_{{\mathfrak p}}}({\mathfrak p} B_{{\mathfrak p}})=[L:K].$$
If $(S,\mathfrak{n})\to(B,\mathfrak{m})$ is a finite-transversal extension
of local rings with respect to $\mathfrak{m}$ then we simply say that 
$(S,\mathfrak{n})\to(B,\mathfrak{m})$ is a \emph{finite-transversal extension}.
\end{definition}

Using  Zariski's formula for the multiplicity for finite projections, \cite[Theorem 24, page 297 and Corollary 1, page 299]{Z-SII}, one can get the following characterization of finite-transversal projections:  

\begin{proposition} \label{ZarCond}
	\cite[Corollary 4.9]{V} 
 Let $S\subset B$ be a finite extension of excellent rings  with $S$ regular, and  $B$  equidimensional and reduced. Suppose that no non-zero element of $S$ is a zero divisor in $B$.  	Let ${\mathfrak p}\subset B$ be a prime ideal, and let 
	$\mathfrak{q}=\mathfrak{p}\cap S$. 
	Then the following are equivalent:
	\begin{enumerate}
		\item $e_{B_{\mathfrak{p}}}(\mathfrak{p} B_\mathfrak{p})=[L:K]$.
		\item The following three conditions hold:
		\begin{itemize}
			\item[(i)] $\mathfrak{p}$ is the only prime of $B$ dominating $\mathfrak{q}$,
			\item[(ii)] $k(\mathfrak{p})=B_{\mathfrak{p}}/\mathfrak{p}B_{\mathfrak{p}}=S_{\mathfrak{q}}/\mathfrak{q}S_{\mathfrak{q}}=k(\mathfrak{q})$,
			\item[(iii)] $e_{B_{\mathfrak p}}({\mathfrak q}B_{\mathfrak p})=e_{B_{\mathfrak p}}({\mathfrak p}B_{\mathfrak p})$.
		\end{itemize}
	\end{enumerate}
\end{proposition}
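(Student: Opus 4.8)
The plan is to reduce everything to the localization at $\mathfrak{q}$ and then extract both implications from a single application of Zariski's multiplicity formula. First I would localize: set $S_{\mathfrak{q}}\subset B_{\mathfrak{q}}:=B\otimes_S S_{\mathfrak{q}}$, a finite extension with $S_{\mathfrak{q}}$ a regular local ring and $B_{\mathfrak{q}}$ torsion-free over it (the zero-divisor hypothesis gives $B\hookrightarrow L$, so the generic rank of $B$ over $S$ is exactly $[L:K]$, and this rank is unchanged by localization). By going-up for the integral extension $S_{\mathfrak{q}}\subset B_{\mathfrak{q}}$, the primes of $B$ lying over $\mathfrak{q}$ are precisely the finitely many maximal ideals of $B_{\mathfrak{q}}$; write them $\mathfrak{p}=\mathfrak{p}_1,\dots,\mathfrak{p}_r$. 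Since $B_{\mathfrak{q}}/\mathfrak{q}B_{\mathfrak{q}}=B\otimes_S k(\mathfrak{q})$ is a finite, hence Artinian, $k(\mathfrak{q})$-algebra, it splits as the product of its localizations; in particular each $\mathfrak{q}B_{\mathfrak{p}_i}$ is $\mathfrak{p}_iB_{\mathfrak{p}_i}$-primary, so the multiplicities $e_{B_{\mathfrak{p}_i}}(\mathfrak{q}B_{\mathfrak{p}_i})$ are well defined.

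Next I would invoke Zariski's multiplicity formula for the finite extension $S_{\mathfrak{q}}\subset B_{\mathfrak{q}}$ \cite[Theorem~24, Corollary~1]{Z-SII}. Because $S_{\mathfrak{q}}$ is regular, $e_{S_{\mathfrak{q}}}(\mathfrak{q}S_{\mathfrak{q}})=1$, so the formula reads
$$[L:K]=\sum_{i=1}^{r}[k(\mathfrak{p}_i):k(\mathfrak{q})]\cdot e_{B_{\mathfrak{p}_i}}(\mathfrak{q}B_{\mathfrak{p}_i}).$$
This single identity is the heart of the argument. The only extra ingredient is the monotonicity of multiplicity under inclusion of primary ideals: since $\mathfrak{q}B_{\mathfrak{p}_i}\subseteq \mathfrak{p}_iB_{\mathfrak{p}_i}$ are both $\mathfrak{p}_i$-primary, we have $e_{B_{\mathfrak{p}_i}}(\mathfrak{q}B_{\mathfrak{p}_i})\geq e_{B_{\mathfrak{p}_i}}(\mathfrak{p}_iB_{\mathfrak{p}_i})\geq 1$, while trivially $[k(\mathfrak{p}_i):k(\mathfrak{q})]\geq 1$.

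Combining these, I would keep only the term indexed by $\mathfrak{p}=\mathfrak{p}_1$ and discard the (positive) contributions of the remaining $\mathfrak{p}_i$:
$$[L:K]\ \geq\ [k(\mathfrak{p}):k(\mathfrak{q})]\cdot e_{B_{\mathfrak{p}}}(\mathfrak{q}B_{\mathfrak{p}})\ \geq\ e_{B_{\mathfrak{p}}}(\mathfrak{p}B_{\mathfrak{p}}),$$
so the inequality $e_{B_{\mathfrak{p}}}(\mathfrak{p}B_{\mathfrak{p}})\le[L:K]$ holds unconditionally. Statement $(1)$ is the equality case, which forces each inequality above to be an equality. Forcing the discarded terms to vanish forces $r=1$, i.e.\ $(i)$, because every summand is at least $1$; forcing $[k(\mathfrak{p}):k(\mathfrak{q})]=1$ is exactly $(ii)$ (the residue tower $k(\mathfrak{q})\hookrightarrow k(\mathfrak{p})$ is trivial); and forcing $e_{B_{\mathfrak{p}}}(\mathfrak{q}B_{\mathfrak{p}})=e_{B_{\mathfrak{p}}}(\mathfrak{p}B_{\mathfrak{p}})$ is $(iii)$. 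Conversely, $(i)$--$(iii)$ collapse the formula to $[L:K]=e_{B_{\mathfrak{p}}}(\mathfrak{p}B_{\mathfrak{p}})$, giving $(1)$.

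The main obstacle is marshalling Zariski's formula in the present generality: it is classically stated for a finite extension of local \emph{domains}, whereas here $B_{\mathfrak{q}}$ is only reduced, so $L=\prod_j K_j$ is a product of finitely many finite field extensions of $K$ rather than a single field. The standing hypotheses are exactly what legitimizes the passage: reducedness embeds $B$ into $L$ and pins the rank to $[L:K]=\sum_j[K_j:K]$, while excellence and equidimensionality control the catenary and dimension behaviour under which the localized fibres $B_{\mathfrak{p}_i}/\mathfrak{q}B_{\mathfrak{p}_i}$ are Artinian of the expected dimension. Once the formula is justified in this setting, the equality analysis is purely formal.
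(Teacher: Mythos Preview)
Your proposal is correct and follows exactly the approach the paper indicates: the paper does not give its own proof of this proposition but simply cites \cite[Corollary 4.9]{V} and prefaces the statement with ``Using Zariski's formula for the multiplicity for finite projections, \cite[Theorem 24, page 297 and Corollary 1, page 299]{Z-SII}, we have the following characterization\ldots''. Your argument---localize at $\mathfrak{q}$, apply Zariski's formula to the finite extension $S_{\mathfrak{q}}\subset B_{\mathfrak{q}}$, and read off conditions (i)--(iii) as precisely the equality case of the resulting chain of inequalities---is the standard derivation and is what lies behind that citation; your closing paragraph correctly flags that the hypotheses (reduced, equidimensional, excellent) are there to make Zariski's formula available when $B_{\mathfrak{q}}$ is only semilocal reduced rather than a local domain.
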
 

Observe that by Rees' Theorem,  condition (2) (iii) is equivalent to asking that 
	$\mathfrak{q} B_{\mathfrak{p}}$ be  a reduction of the ideal $\mathfrak{p}B_{\mathfrak{p}}$. To be able to use Rees' Theorem we will be assuming that $B$ is an excellent ring.

\medskip

\noindent{\bf {\em On finite-transversal morphisms and the   asymptotic Samuel function}}

\medskip 

\noindent Finite-transversal projections will play a central role in our arguments,
mainly because of the combination of  the outputs of 
Proposition \ref{CoeffenS},  due to Villamayor, and  a theorem of M. Hickel for the computation of the asymptotic Samuel function, Theorem \ref{Hickel_refinado} below.
In addition in \S\ref{setting_1} we briefly describe how to construct 
finite-transversal morphisms for some faithfully flat extension of a given ring $B$.
This will be frequently used in the rest of the paper.

\begin{proposition} \label{CoeffenS}
	\cite[Lemma 5.2]{V}
	Let $S\subset B$ be  a finite extension such that the non-zero elements of $S$ are non-zero divisors in $B$.  
	Assume that $S$ is a regular ring and let $K=K(S)$   be the quotient field of $S$. Let $\theta\in B$ and let $f(Z)\in K[Z]$   be  the monic polynomial of minimal degree such that $f(\theta)=0$.
	If $S[\theta]$ denotes the $S$-subalgebra of $B$ generated by $\theta$, then:
	\begin{enumerate}
		\item the coefficients of $f$ are in $S$, i.e. $f(Z)\in S[Z]$, and
		\item $S[\theta]\cong S[Z]/\langle f(Z)\rangle$.
	\end{enumerate}
\end{proposition}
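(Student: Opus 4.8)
The plan is to pass to the localization of $B$ along $S$, where $\theta$ becomes algebraic over $K$ and $f$ is recognized as the generator of the ideal of polynomials killing $\theta$; then to use that a regular domain is normal in order to push the coefficients of $f$ down into $S$; and finally to deduce the presentation in (2) from the uniqueness of Euclidean division by the monic polynomial $f$. The only genuinely nontrivial input will be the normality of $S$.

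First I would set $L:=B\otimes_S K=(S\setminus\{0\})^{-1}B$. The hypothesis that no non-zero element of $S$ is a zero divisor in $B$ makes the localization map $B\to L$ injective, and likewise $K\hookrightarrow L$; thus $\theta$, $K$ and $S$ all sit inside $L$, and $f(\theta)$ for $f\in K[Z]$ is meaningful there. Since $B$ is finite over $S$, the element $\theta$ is integral over $S$, so there is a monic $g(Z)\in S[Z]$ with $g(\theta)=0$. The set $\{h\in K[Z]:h(\theta)=0\text{ in }L\}$ is an ideal of the principal ideal domain $K[Z]$, and by definition $f$ is its monic generator; in particular $f\mid g$ in $K[Z]$.

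The heart of part (1) is to upgrade $f\in K[Z]$ to $f\in S[Z]$. Here I would use that a regular domain is normal, so $S$ is integrally closed in $K$. Factoring $f$ over an algebraic closure $\overline{K}$, its roots lie among the roots of $g$ (because $f\mid g$), and the roots of $g$ are integral over $S$ since $g$ is monic with coefficients in $S$. Hence the coefficients of $f$, being up to sign the elementary symmetric functions of its roots, are integral over $S$; as they also lie in $K$, the normality of $S$ forces them into $S$, giving $f\in S[Z]$. This is precisely the classical fact that over an integrally closed domain the minimal polynomial of an integral element has coefficients in the base ring, and I expect the implication \emph{regular} $\Rightarrow$ \emph{normal} to be the one step that does real work.

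For part (2), consider the surjective $S$-algebra map $\varphi\colon S[Z]\to S[\theta]$ with $\varphi(Z)=\theta$. Since $f(\theta)=0$ we have $\langle f\rangle\subseteq\ker\varphi$, and it remains to prove equality. Given $h\in\ker\varphi$, I would divide by the monic $f$ inside $S[Z]$ — division with remainder by a monic polynomial is valid over an arbitrary commutative ring — obtaining $h=fq+r$ with $q,r\in S[Z]$ and $\deg r<\deg f$. Evaluating at $\theta$ (legitimate in $B\hookrightarrow L$) gives $r(\theta)=h(\theta)-f(\theta)q(\theta)=0$; since $\deg r<\deg f$, the minimality of the degree of $f$ among polynomials in $K[Z]$ killing $\theta$ forces $r=0$. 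Therefore $h=fq\in\langle f\rangle$, so $\ker\varphi=\langle f\rangle$ and $S[\theta]\cong S[Z]/\langle f(Z)\rangle$.
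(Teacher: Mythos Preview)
Your argument is correct and is the standard one: normality of the regular domain $S$ forces the coefficients of the minimal polynomial into $S$, and Euclidean division by the monic $f$ in $S[Z]$ identifies the kernel of $S[Z]\twoheadrightarrow S[\theta]$. Note that the paper does not give its own proof of this proposition --- it simply cites \cite[Lemma 5.2]{V} --- so there is nothing to compare against here; your write-up is exactly the classical argument one would expect behind that citation.
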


\begin{theorem}\label{Hickel_refinado} \cite[Theorem 2.1]{Hickel13}
	Let $(B, {\mathfrak m})$ be a Noetherian equicharacteristic equidimensional and excellent local ring of Krull dimension $d$.
	Assume that there is a  a 
	faithfully flat extension  $(B, {\mathfrak m}) \to (\widetilde{B}, \widetilde{\mathfrak m})$  with ${\mathfrak m}\widetilde{B}=\widetilde{\mathfrak m}$ together with  a 
  finite-transversal morphism  with respect to $\widetilde{\mathfrak m}$,  $S\subset \widetilde{B}$.
	Let $b\in B$. If 
	$$p(Z)=Z^{\ell}+a_1Z^{\ell-1}+\ldots+a_{\ell}$$
	is the minimal polynomial of $b\in \widetilde{B}$ over the fraction field of $S$,  $K(S)$, then 
	$p(Z)\in S[Z]$ and 
	\begin{equation}
		\label{calculo_orden}
		\nub_{\mathfrak m}(b)=\nub_{\widetilde{\mathfrak m}}(b)=\min_i \left\{\frac{\nu_{{\mathfrak n}} (a_i)}{i}: i=1,\ldots,\ell\right\},
	\end{equation}
	where ${\mathfrak n}=\widetilde{\mathfrak m}\cap S$. 
\end{theorem}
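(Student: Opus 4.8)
The plan is to prove the statement in three successive reductions and then a two-sided valuation estimate. First, the assertion that $p(Z)\in S[Z]$ is immediate from Proposition~\ref{CoeffenS} applied to $\theta=b$: the minimal polynomial of $b$ over $K(S)$ already has all its coefficients in $S$, so each $a_i\in S$ and $\nu_{\mathfrak n}(a_i)$ makes sense. Next I would establish the first equality $\nub_{\mathfrak m}(b)=\nub_{\widetilde{\mathfrak m}}(b)$ using only faithful flatness. Since $\mathfrak m\widetilde B=\widetilde{\mathfrak m}$ one has $\widetilde{\mathfrak m}^n=\mathfrak m^n\widetilde B$ for every $n$, and for a faithfully flat extension $\mathfrak m^n\widetilde B\cap B=\mathfrak m^n$; hence for every $g\in B$ and every $n$, $g\in\widetilde{\mathfrak m}^n\iff g\in\mathfrak m^n$, i.e. $\nu_{\mathfrak m}(g)=\nu_{\widetilde{\mathfrak m}}(g)$. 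Taking $g=b^n$ and passing to the limit in Definition~\ref{definicion_Samuel} yields $\nub_{\mathfrak m}(b)=\nub_{\widetilde{\mathfrak m}}(b)$, reducing everything to a computation inside $\widetilde B$.

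The third reduction replaces $\widetilde{\mathfrak m}$ by $\mathfrak n\widetilde B$. By Proposition~\ref{ZarCond} the finite-transversal hypothesis forces $e_{\widetilde B}(\mathfrak n\widetilde B)=e_{\widetilde B}(\widetilde{\mathfrak m})$, and by the observation following that proposition (Rees' Theorem) $\mathfrak n\widetilde B$ is a reduction of $\widetilde{\mathfrak m}$. Reductions share the integral closure of every power, $\overline{(\mathfrak n\widetilde B)^j}=\overline{\widetilde{\mathfrak m}^j}$, so by the characterization (\ref{nub_integral}) the two asymptotic Samuel functions agree: $\nub_{\widetilde{\mathfrak m}}(b)=\nub_{\mathfrak n\widetilde B}(b)$. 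It therefore remains to prove $\nub_{\mathfrak n\widetilde B}(b)=\min_i\{\nu_{\mathfrak n}(a_i)/i\}$; write $r$ for the right-hand side and recall that $v:=\nu_{\mathfrak n S_{\mathfrak n}}$ is the order valuation of the regular local ring $S_{\mathfrak n}$, which is a rank-one valuation of $K(S)$ with $v(\mathfrak n)=1$.

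For the inequality $\nub_{\mathfrak n\widetilde B}(b)\ge r$ I would argue valuatively and uniformly. Since $a_i\in\mathfrak n^{\nu_{\mathfrak n}(a_i)}$ we have $a_i\in(\mathfrak n\widetilde B)^{\nu_{\mathfrak n}(a_i)}$, so for every valuation $w$ centered on $\widetilde B$ we get $w(a_i)\ge\nu_{\mathfrak n}(a_i)\,w(\mathfrak n\widetilde B)$. Feeding the integral relation $b^{\ell}=-\sum_{i=1}^{\ell}a_i b^{\ell-i}$ into $w$ gives $\ell\,w(b)\ge\min_{1\le i\le\ell}\bigl(w(a_i)+(\ell-i)w(b)\bigr)$, whence $i_0\,w(b)\ge w(a_{i_0})$ for some $i_0$ and thus $w(b)\ge\min_i w(a_i)/i\ge r\,w(\mathfrak n\widetilde B)$. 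As this holds for all $w$, the valuative description (\ref{Samuel_valorativo}) of $\nub_{\mathfrak n\widetilde B}$ gives $\nub_{\mathfrak n\widetilde B}(b)\ge r$.

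For the reverse inequality I would exhibit one valuation that realizes $r$, and this is the step I expect to be the main obstacle. The idea is to extend the order valuation $v$ of $K(S)$ to a valuation $\bar v$ centered on $\widetilde B$ with $\bar v(b)=r$. Since $v$ is rank one on the field $K(S)$, Newton-polygon theory applies to $p(Z)=Z^{\ell}+a_1Z^{\ell-1}+\cdots+a_\ell$: plotting the points $(i,\nu_{\mathfrak n}(a_i))$ and taking the lower convex hull, the least slope equals $\min_i\nu_{\mathfrak n}(a_i)/i=r$, and this least slope is exactly the smallest $v$-valuation of a root of $p$. Because $b$ is a root of its minimal polynomial, its conjugates realize every root valuation, so some extension $\bar v$ of $v$ to $K(S)(b)\subset\mathrm{Frac}(\widetilde B)$ satisfies $\bar v(b)=r$; as $\widetilde B$ is integral over $S$ this $\bar v$ is centered on $\widetilde B$ and $\bar v(\mathfrak n\widetilde B)=v(\mathfrak n)=1$. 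Then for every rational $a/c>r$ the valuative criterion for integral closure (\cite{Hu_Sw}) shows $b^c\notin\overline{(\mathfrak n\widetilde B)^a}$, so by (\ref{nub_integral}) $\nub_{\mathfrak n\widetilde B}(b)\le r$; combining the two inequalities proves (\ref{calculo_orden}). The delicate bookkeeping here is that $\widetilde B$ is only reduced, not a domain: $\mathrm{Frac}(\widetilde B)$ is a product of fields, $p$ decomposes as a product of the minimal polynomials over the several components, and one must select the component and the extension of $v$ on which $b$ attains the minimal root valuation. I expect this to be the only genuinely technical point.
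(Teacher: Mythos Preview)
Your argument is correct, and in fact the paper does not supply a proof at all: it simply records that Hickel's original \cite[Theorem~2.1]{Hickel13} handles the case $\widetilde B=\widehat B$ via a reduction to domains, and defers the general faithfully-flat statement to \cite[Theorem~11.6.8]{BrEnHandbook}. What you have written is essentially the standard direct argument. Your three reductions---Proposition~\ref{CoeffenS} for $p\in S[Z]$, faithful flatness for $\nub_{\mathfrak m}=\nub_{\widetilde{\mathfrak m}}$, and passing to the reduction $\mathfrak n\widetilde B$ of $\widetilde{\mathfrak m}$ via Proposition~\ref{ZarCond} and (\ref{nub_integral})---are all sound, as is the lower bound $\nub\ge r$ through Rees valuations and (\ref{Samuel_valorativo}).

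For the upper bound you correctly flag the non-domain case as the only delicate point, and your sketch can be made precise as follows. Since $\widetilde B$ is reduced (part of Definition~\ref{DefTransv}) with minimal primes $P_1,\ldots,P_s$, one has $\widetilde B\otimes_S K(S)=\prod_j L_j$ with $L_j$ the fraction field of $\widetilde B/P_j$; the minimal polynomial $p_j$ of the image $b_j\in L_j$ is irreducible over $K(S)$, and $p=\mathrm{lcm}_j(p_j)$. Hence the irreducible factor of $p$ carrying a root of $v$-valuation $r$ equals some $p_{j_0}$, and then some extension $\bar v$ of $v$ to $L_{j_0}$ (discrete, since $L_{j_0}/K(S)$ is finite) satisfies $\bar v(b_{j_0})=r$ with $\bar v(\mathfrak n)=1$. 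The valuative criterion for integral closure then yields $b^c\notin\overline{(\mathfrak n\widetilde B)^a}$ for $a/c>r$, as you claim. Your phrase ``its conjugates realize every root valuation'' tacitly presupposes $p$ irreducible; the component-wise argument above is what replaces it in general.
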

\begin{proof}
Theorem 2.1 in \cite{Hickel13} is stated in the case in which $\widetilde{B}=	\widehat{B}$, and then a reduction to the domain case is considered.
See \cite[Theorem 11.6.8]{BrEnHandbook}, where it is checked that Hickel's Theorem
holds in this more general setting.
\end{proof}

\begin{parrafo} \label{setting_1} {\bf Constructing finite-transversal projections.} Given an  excellent reduced  equidimensional ring $B$, and  a point 
${\mathfrak p}\in\Spec(B)$, in general, there might not be  
a regular ring $S$ and  a finite-transversal projection with respect to ${\mathfrak p}$, $\Spec(B)\to\Spec(S)$. To start with, it is a  necessary condition  that ${\mathfrak p}$ has a reduction generated by $\dim(B_{\mathfrak p})$-elements. But even if such condition is satisfied, the existence of the required finite  projection is not guaranteed (see \cite[Example 11.3.11]{BrEnHandbook}). However, finite-transversal projections can be constructed if we are allowed to extend our ring $B$.    For instance, in \cite{V} (see \cite[Proposition 31.1]{Br_V2}) it is proven that if   $B$ is  essentially of finite type over a perfect field $k$, then a finite-transversal projection can be constructed in some \'etale extension of $B$.   
\medskip	

\noindent
\textbf{Existence of finite transversal projection.}
Suppose that $(B,{\mathfrak m})$ is a local equicharacteristic equidimensional excellent reduced ring. Assume also that $B$ contains  a reduction of ${\mathfrak m}$
generated by $d=\dim (B)$ elements, $x_1,\ldots,x_d\in B$.
Now, denote by $k'$ a coefficient field of the $\mathfrak{m}$-adic completion
$(\widehat{B},\widehat{\mathfrak{m}})$ and set
$S=k'[[x_1,\ldots,x_d]]\subset \widehat{B}$.
Since $x_1,\ldots,x_d$ are analytically independent, 
$S$ is a ring of power series in $d$ variables.
The extension 
\begin{equation} \label{ExtCompletado}
S=k'[[x_1,\ldots,x_d]]\subset \widehat{B}
\end{equation}
is finite by \cite[Theorem 8, page 68]{Cohen} and, moreover, finite-transversal with respect to  $\widehat{\mathfrak{m}}$.
See also \cite[Proof of Theorem 1.1]{Hickel13}.

The extension $B\to\widehat{B}$ is faithfully flat, this means that for any ideal $I\subset B$ and $b\in B$ we have
\begin{equation} \label{EqualComplete}
\nub_{I}(b)= \nub_{I\widehat{B}}(b).
\end{equation}

Note that if the residue field of $B$ is infinite then
by \cite[Proposition 8.3.7]{Hu_Sw}, $B$ contains a reduction of ${\mathfrak m}$
generated by $d=\dim (B)$ elements.
\medskip

\noindent
\textbf{Extension to the case with reduction with $d$ elements.}
If $B$ does not contain a reduction of ${\mathfrak m}$ generated by $d$ elements, then we want to produce a faithfully flat extension $(B,\mathfrak{m})\to (B_1,\mathfrak{m}_1)$ such that $\mathfrak{m}_1$ has such reduction.
We consider two possibilities as follows:
\begin{description}
\item[(a)] If $B$ contains a field $k$ consider a suitable \'etale extension of $k$, ${k}_1\supset k$, so that, after localizing at a maximal ideal
${\mathfrak m}_1\subset B\otimes_k k_1$, the local ring 
\begin{equation} \label{ExtEtale}
{B}_1:=(B\otimes_k k_1)_{{\mathfrak m}_1}
\end{equation} 
contains a reduction generated by $d$-elements.
\item[(b)] Other possibility is to set the ring
\begin{equation} \label{ExtBx}
B_1=(B[x])_{\mathfrak{m}[x]}
\end{equation}
which has infinite residue field.
\end{description}
\medskip

Note that in both cases we have that
\begin{equation} \label{EqualB1}
\nub_{I}(b)= \nub_{IB_1}(b),
\end{equation}
for any ideal $I\subset B$ and $b\in B$.
\medskip
	
\noindent
\textbf{Reduction to reduced rings.}
In general we will be dealing with a local ring $(B,{\mathfrak m})$ of Krull dimension $d\geq 1$.
And we will be interested in proving results  concerning the asymptotic Samuel function,   
	$\nub_{\mathfrak m}: B\to {\mathbb Q}_{\geq 0}$. Now, observe, first of all, that if $b\in B$ and $\widetilde{b}\in B_{\text{\tiny{red}}}$ is the  image of $b$ in $B/\text{Nil}(B)$,  then 
\begin{equation} \label{nub_reducido} 
\nub_{\mathfrak m}(b)= \nub_{{\mathfrak m}_{\text{\tiny{red}}}}(\widetilde{b}).
\end{equation}
Hence, in many situations we may reduce our proofs to the case in which the ring in consideration is reduced.
\medskip

\noindent
\textbf{Reduction to complete rings.}
Summing up, for a local ring $(B,\mathfrak{m})$,
let be $B_1$ as in (\ref{ExtEtale}) or as in (\ref{ExtBx}).
Then we have a chain of faithfully flat extensions,
\begin{equation} \label{ExtChain}
(B,\mathfrak{m})\to (B_1,\mathfrak{m}_1)\to 
(\widehat{B}_1,\widehat{\mathfrak{m}}_1).
\end{equation}
For any ideal $I\subset B$ and any $b\in B$,
the chain of equalities 
\begin{equation} \label{EqualChain}
\nub_{I}(b)=\nub_{I{B}_1}(b)= \nub_{I\widehat{B}_1}(b),
\end{equation}
is guaranteed (see \cite[Proposition 1.6.2]{Hu_Sw}).
In particular, if $I={\mathfrak m}$, then
\begin{equation} \label{MaxIdChain}
\nub_{\mathfrak m}(b)=\nub_{{\mathfrak m}{B}_1}(b)=\nub_{{\mathfrak m}_1}(b)=\nub_{{\mathfrak m}_1\widehat{B}_1}(b)=\nub_{\widehat{\mathfrak m}_1}(b).
\end{equation}
Thus, given an excellent, equidimensional, equicharacteristic local ring  $(B,\mathfrak{m})$, in most situations we will be able to reduce our proofs to the case of a complete reduced local ring containing either an infinite residue field or a field with sufficient scalars. By assuming that $B$ is excellent we will guarantee that $B$ is formally equidimensional and analytically unramified. The former condition allows us to use Rees' Theorem in Proposition \ref{ZarCond}, and the second will be implicitely used when reducing to the case of the completion of a reduced ring. 
\medskip

\noindent
\textbf{Good behaviour for equimultiple prime ideals.}
Let be ${\mathfrak p}\subset {\mathfrak m}$ a prime ideal in $B$
such that $e_{B_{\mathfrak p}}({\mathfrak p}B_{\mathfrak p})=e_B({\mathfrak m})$.
Then this condition is preserved if we consider some of the above extensions.
Set 
$(B',\mathfrak{m}')$ equal to either $B'=B_{\text{red}}$, or $B'=B_1$ as in
(\ref{ExtEtale}) or as in (\ref{ExtBx}), or $B'=\widehat{B}_1$ then
there exists a prime ideal $\mathfrak{p}'\subset B'$ dominating $\mathfrak{p}$, and
\begin{equation} \label{ExtPrime}
e_{B'_{\mathfrak p'}}({\mathfrak p'}B'_{\mathfrak p'})=e_{B'}({\mathfrak m'}).
\end{equation}
Moreover, if $\mathfrak{p}$ defines a regular subscheme in $\text{Spec}(B)$,
then $\mathfrak{p}'$ also defines a regular subscheme in $B'$.
\end{parrafo}
\medskip

\noindent{\bf {\em When does $\nub$ behave as a valuation?}}

\medskip

After the discussion in \S \ref{setting_1}, we get the following result:
\begin{proposition} \label{MiniMax}
	Let $(B,{\mathfrak m})$ be an equidimensional excellent equicharacteristic local ring of dimension $d\geq 1$. Suppose ${\mathfrak m}$ has a reduction generated by $d$ elements, $y_1,\ldots, y_d\in {\mathfrak m}$, and let $k\subset B$ be a field. Set $A:=k[y_1,\ldots, y_d]_{\langle y_1,\ldots, y_d\rangle}\subset B$.
Then, for $a\in A$ and $b\in B$, 
	$$\nub_{\mathfrak m}(ab)=\nub_{\mathfrak m}(a)+  \nub_{\mathfrak m}(b).$$
\end{proposition}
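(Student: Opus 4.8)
The plan is to reduce, exactly as in \S\ref{setting_1}, to a situation where Hickel's formula (Theorem \ref{Hickel_refinado}) is available, and then to read off the identity from an explicit description of how a minimal polynomial transforms under multiplication by an element of the regular subring. Since $\nub_{\mathfrak m}$ is unchanged under the faithfully flat extensions of \S\ref{setting_1} and under passing to $B_{\text{red}}$ (see (\ref{nub_reducido})), and since $y_1,\dots,y_d$ remain a reduction of the maximal ideal throughout, I may assume that $B$ is a complete, reduced, equidimensional local ring with infinite residue field and that there is a finite-transversal extension $S=k'[[y_1,\dots,y_d]]\subset B$ (with respect to $\mathfrak m$), where $k'$ is a coefficient field containing $k$ and $\mathfrak n=\mathfrak m\cap S$ is its maximal ideal. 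The whole point of this normalization is that the image of $A=k[y_1,\dots,y_d]_{\langle y_1,\dots,y_d\rangle}$ lands inside the regular ring $S$: an element $a\in A$ is a quotient $f/g$ of polynomials in the $y_i$ with $g$ a unit in $S$, so $a\in S$ and the order $\nu_{\mathfrak n}(a)$ computed in $S$ agrees with the $\langle y_1,\dots,y_d\rangle$-adic order of $a$ in $A$. This is where Proposition \ref{ExtFinNuBar} is meant to be invoked, legitimizing the computation of $\nub_{\mathfrak m}$ of elements of $B$ through the finite-transversal extension $S\subset\widetilde B$.

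First I would record what Theorem \ref{Hickel_refinado} says for the two factors. For $a\in A\subset S$ the minimal polynomial of $a$ over $K(S)$ is simply $Z-a$, whence $\nub_{\mathfrak m}(a)=\nu_{\mathfrak n}(a)$; in particular $\nub_{\mathfrak m}$ restricted to $A$ is the valuation $\nu_{\mathfrak n}$ of the regular ring $S$, which is additive. If $a=0$ both sides of the asserted identity are $\infty$, so assume $a\neq 0$. Write
$$q(Z)=Z^m+b_1Z^{m-1}+\dots+b_m\in S[Z]$$
for the minimal polynomial of $b$ over $K(S)$, whose coefficients lie in $S$ by Proposition \ref{CoeffenS}, so that $\nub_{\mathfrak m}(b)=\min_i \nu_{\mathfrak n}(b_i)/i$.

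The key step is to identify the minimal polynomial of $ab$. Because $a\in K(S)^{\times}$ we have $K(S)(ab)=K(S)(b)$, and multiplication by $a$ carries the conjugates of $b$ to those of $ab$; concretely, the monic polynomial $a^m\,q(Z/a)$ has $ab$ as a root and degree $m=[K(S)(ab):K(S)]$, hence it is the minimal polynomial of $ab$. Expanding,
\begin{equation*}
a^m\,q(Z/a)=Z^m+(ab_1)Z^{m-1}+(a^2b_2)Z^{m-2}+\dots+(a^mb_m),
\end{equation*}
so its $i$-th coefficient is $c_i=a^ib_i\in S$. Applying Theorem \ref{Hickel_refinado} to $ab$ and using that $\nu_{\mathfrak n}$ is a valuation on $S$,
\begin{equation*}
\nub_{\mathfrak m}(ab)=\min_i\frac{\nu_{\mathfrak n}(a^ib_i)}{i}
=\min_i\frac{i\,\nu_{\mathfrak n}(a)+\nu_{\mathfrak n}(b_i)}{i}
=\nu_{\mathfrak n}(a)+\min_i\frac{\nu_{\mathfrak n}(b_i)}{i}
=\nub_{\mathfrak m}(a)+\nub_{\mathfrak m}(b),
\end{equation*}
which is the desired equality.

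The algebraic heart of the argument---the transformation $c_i=a^ib_i$ of the coefficients of the minimal polynomial under scaling by $a$, together with the additivity of $\nu_{\mathfrak n}$---is elementary once the setup is in place. I therefore expect the main obstacle to be bookkeeping in the reductions of \S\ref{setting_1}: one must arrange them so that the image of $A$ genuinely lies in the regular subring $S$ (which is why one takes $S$ to be the power series ring on the reduction generators $y_i$ and insists on $k\subseteq k'$), and one must check that both the finite-transversal hypothesis and the reduction $\langle y_1,\dots,y_d\rangle$ of the maximal ideal survive the passage to $B_{\text{red}}$, the enlargement of the residue field, and completion. Note finally that the inequality $\nub_{\mathfrak m}(ab)\geq \nub_{\mathfrak m}(a)+\nub_{\mathfrak m}(b)$ holds for free, since $\nub_{\mathfrak m}$ is an order function; so strictly only the reverse inequality needs the argument above, although Hickel's formula in fact delivers the equality in one stroke.
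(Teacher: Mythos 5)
Your proof is correct. You perform the same preliminary reduction as the paper (via \S\ref{setting_1}, to a complete reduced $B$ with a finite-transversal extension $S=k'[[y_1,\ldots,y_d]]\subset B$, $k\subset k'$, so that $A$ lands in $S$), but the punchline is genuinely different: the paper concludes in one line by invoking Proposition \ref{ExtFinNuBar}, whose proof runs through the normalized blow-up and the fact that every Rees valuation of $\mathfrak m$ restricts to the unique divisorial valuation of $\mathfrak n$ on $S$, whereas you conclude via Hickel's formula (Theorem \ref{Hickel_refinado}) together with the elementary observation that the minimal polynomial of $ab$ is $a^m q(Z/a)$, with coefficients $a^ib_i$, so that additivity of $\nu_{\mathfrak n}$ on the regular ring $S$ propagates through the $\min_i\nu_{\mathfrak n}(\cdot)/i$ expression. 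Your degree count $\dim_{K(S)}K(S)[ab]=\dim_{K(S)}K(S)[b]=m$ correctly identifies $a^mq(Z/a)$ as the minimal polynomial even though $L=B\otimes_SK(S)$ need not be a field, and the edge cases ($a=0$, vanishing coefficients $b_i$) are handled. What each approach buys: yours is a concrete, self-contained computation once Hickel's theorem is granted, and it makes visible exactly where additivity comes from (the valuation property of $\nu_{\mathfrak n}$ on $S$); the paper's route through Proposition \ref{ExtFinNuBar} is slightly more general in that it applies to any finite extension $A\to C$ with $\mathfrak nC$ a reduction of $\mathfrak q$ and $A/\mathfrak n$ regular (not only to the maximal ideal of a complete local ring), and it simultaneously yields the companion identity $\nub_{\mathfrak q}(a)=\nub_{\mathfrak n}(a)$ without appealing to completion.
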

\begin{proof}
	Using the arguments in \S \ref{setting_1}, we can assume that $B$ is reduced and complete, and consider the finite-transversal extension 
$$S=k'[[y_1,\ldots, y_d]]\to B,$$
where $k'\supset k$ is a coefficient field of $B$. 
Now the result follows from \cite[Proposition 2.10]{BeBrEn}.
\end{proof}

The rest of the section	is devoted to the study of some more properties of finite-transversal projections.
\medskip

\noindent{\bf {\em On finite-transversal projections and the   top multiplicity locus of a ring}}

\medskip

\noindent The following three statements follow as a consequence 
of Proposition \ref{ZarCond} when applied to a local ring $(B,\mathfrak{m})$.
They are results  concerning   the primes  in $\Spec(B)$ that have the same multiplicity as that of $B$ at ${\mathfrak m}$, that is, the primes in the top multiplicity locus of $\Spec(B)$. 

\begin{proposition}\label{extension_transversalidad}
	Suppose  that  $(S, {\mathfrak n})\subset (B,{\mathfrak m})$ is   finite-transversal with respect to ${\mathfrak m}$. Let  ${\mathfrak p}\subset B$ be  a prime ideal with $e_{B_{\mathfrak p}}({\mathfrak p}B_{\mathfrak p})=e_B({\mathfrak m})$. Let ${\mathfrak q}={\mathfrak p}\cap S$.  Then:
	\begin{enumerate}
		\item[(i)] The extension $S_{\mathfrak q}\subset  B_{\mathfrak p}$ 
		is finite transversal with respect to ${\mathfrak p}$; 
		\item[(ii)] The local ring  $B/{\mathfrak p}$ is regular if and only if  $S/{\mathfrak q}$ is regular;
		\item[(iii)] If $B/{\mathfrak p}$ is regular then $S/{\mathfrak q}=B/{\mathfrak p}$. 
	\end{enumerate} 
\end{proposition}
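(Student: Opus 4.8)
The plan is to deduce all three statements from Proposition \ref{ZarCond}. Writing $K$ for the fraction field of $S$ and $L=B\otimes_S K$, the assumption that $(S,\mathfrak{n})\subset(B,\mathfrak{m})$ is finite-transversal at $\mathfrak{m}$ gives $e_B(\mathfrak{m})=[L:K]$; combined with the hypothesis $e_{B_{\mathfrak{p}}}(\mathfrak{p}B_{\mathfrak{p}})=e_B(\mathfrak{m})$ this yields $e_{B_{\mathfrak{p}}}(\mathfrak{p}B_{\mathfrak{p}})=[L:K]$, which is condition (1) of Proposition \ref{ZarCond} for $\mathfrak{p}$. Reading off condition (2), I obtain the three facts that drive the whole proof: $\mathfrak{p}$ is the only prime of $B$ over $\mathfrak{q}$; the residue fields coincide, $k(\mathfrak{p})=k(\mathfrak{q})$; and (by the remark following Proposition \ref{ZarCond}) $\mathfrak{q}B_{\mathfrak{p}}$ is a reduction of $\mathfrak{p}B_{\mathfrak{p}}$.

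For (i), the uniqueness of $\mathfrak{p}$ over $\mathfrak{q}$ shows that the semilocalization $(S\setminus\mathfrak{q})^{-1}B$ has a single maximal ideal, hence equals $B_{\mathfrak{p}}$ and is module-finite over $S_{\mathfrak{q}}$. I would then verify the standing hypotheses of Definition \ref{DefTransv} for $S_{\mathfrak{q}}\subset B_{\mathfrak{p}}$: that $S_{\mathfrak{q}}$ is regular, $B_{\mathfrak{p}}$ is reduced, the condition on nonzero divisors descends under localization, and $B_{\mathfrak{p}}$ is equidimensional. The last point is where care is needed: since $S$ is a normal domain, going-down holds for the integral extensions $S\subset B/\mathfrak{P}$ (for $\mathfrak{P}$ minimal, necessarily with $\mathfrak{P}\cap S=(0)$), so $\dim(B_{\mathfrak{p}}/\mathfrak{P}B_{\mathfrak{p}})=\operatorname{ht}(\mathfrak{q})$ independently of $\mathfrak{P}$. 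Finally $B_{\mathfrak{p}}\otimes_{S_{\mathfrak{q}}}K=(S\setminus 0)^{-1}B=L$, so the generic degree is unchanged and $e_{B_{\mathfrak{p}}}(\mathfrak{p}B_{\mathfrak{p}})=[L:K]$ is exactly finite-transversality with respect to $\mathfrak{p}$.

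For (ii) and (iii) I would study the finite extension of local domains $S/\mathfrak{q}\hookrightarrow B/\mathfrak{p}$. By condition (2) it is birational, since the fraction fields are $k(\mathfrak{q})=k(\mathfrak{p})$, and it has equal residue fields $k(\mathfrak{n})=k(\mathfrak{m})$ coming from transversality at $\mathfrak{m}$. Moreover $\mathfrak{n}B$ is a reduction of $\mathfrak{m}$, and reductions pass to quotients (if $\mathfrak{m}^{r+1}=\mathfrak{n}B\,\mathfrak{m}^{r}$ then the same holds modulo $\mathfrak{p}$), so $\mathfrak{n}(B/\mathfrak{p})$ is a reduction of $\mathfrak{m}/\mathfrak{p}$. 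Now if $S/\mathfrak{q}$ is regular it is integrally closed, and a birational integral extension then forces $B/\mathfrak{p}=S/\mathfrak{q}$, which is regular; this gives the easy implication of (ii) together with half of (iii). Conversely, if $B/\mathfrak{p}$ is regular it is normal, hence equals the normalization of $S/\mathfrak{q}$, and Zariski's multiplicity formula \cite{Z-SII} applied to $S/\mathfrak{q}\subset B/\mathfrak{p}$ (generic degree $1$, residue degree $1$) gives $e(S/\mathfrak{q})=e_{B/\mathfrak{p}}(\mathfrak{n}(B/\mathfrak{p}))=e_{B/\mathfrak{p}}(\mathfrak{m}/\mathfrak{p})=1$, where the middle equality is the reduction just noted and the last is regularity of $B/\mathfrak{p}$. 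As $S/\mathfrak{q}$ is an excellent local domain, hence formally equidimensional, Nagata's criterion (multiplicity one implies regular) shows $S/\mathfrak{q}$ is regular, and normality again gives $S/\mathfrak{q}=B/\mathfrak{p}$, finishing (ii) and (iii).

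The main obstacle is the implication ``$B/\mathfrak{p}$ regular $\Rightarrow S/\mathfrak{q}$ regular'': birationality and equal residue fields alone do not suffice, as the cusp $k[[t^2,t^3]]\subset k[[t]]$ shows. What saves the argument is precisely the top-multiplicity hypothesis, which intervenes twice: through $k(\mathfrak{p})=k(\mathfrak{q})$, which makes the induced extension on the slice birational, and through the reduction $\mathfrak{n}(B/\mathfrak{p})$ of $\mathfrak{m}/\mathfrak{p}$ inherited from transversality at $\mathfrak{m}$, which pins $e(S/\mathfrak{q})$ to $1$. In the cusp this last reduction fails, since $\mathfrak{n}(B/\mathfrak{p})=(t^2)$ is not a reduction of $(t)$, which is exactly why that configuration cannot occur here. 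A secondary technical point, flagged above, is the equidimensionality of $B_{\mathfrak{p}}$ in (i), which rests on the going-down theorem for the normal ring $S$.
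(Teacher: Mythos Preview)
Your argument is correct. The paper does not give an independent proof of this proposition: it simply cites \cite[Corollary~5.9]{V} for part~(i) and \cite[Proposition~6.3]{V} for parts~(ii) and~(iii). What you have written is a self-contained unpacking of exactly the ideas behind those references, all filtered through Proposition~\ref{ZarCond}.

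The one place where your write-up goes a bit beyond what the cited results need is the explicit verification that $B_{\mathfrak p}$ is equidimensional via going-down over the normal ring $S$; this is the right justification and is often left implicit. Your treatment of the hard implication in~(ii), namely ``$B/\mathfrak p$ regular $\Rightarrow S/\mathfrak q$ regular'', is also the standard route: pass to the finite birational extension $S/\mathfrak q\hookrightarrow B/\mathfrak p$ with trivial residue extension, use that $\mathfrak n(B/\mathfrak p)$ is a reduction of $\mathfrak m/\mathfrak p$, apply Zariski's multiplicity formula to get $e(S/\mathfrak q)=1$, and conclude by Nagata's criterion using that $S/\mathfrak q$ is an excellent local domain and hence formally equidimensional. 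Your cusp example is a nice illustration of why the reduction condition is essential. So: same approach as what lies behind the paper's citations, but made explicit rather than deferred.
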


\begin{proof}
Similar results were proven in \cite[Corollary 5.9, Proposition 6.3]{V} and \cite[Corollary 2.8]{COA} in the context of algebraic varieties defined over perfect fields. Here we check that the statement holds for more general rings under the hypotheses of the proposition. 
\medskip 
	
\noindent (i) Consider the following commutative diagram with vertical finite morphisms:
	$$\xymatrix{B \ar[r] &  B\otimes_S S_{\mathfrak q} \ar[r] & L\\
		S \ar[u] \ar[r] & S_{\mathfrak q} \ar[u]\ar[r]  &  K\ar[u].}$$
	Observe that the  generic rank of the extension $ S_{\mathfrak q}\to  B\otimes_S S_{\mathfrak q}$ is $m=e_{B_{\mathfrak p}}({\mathfrak p}B_{\mathfrak p})$. Hence, by definition,  $S_{\mathfrak q}\to  B\otimes_S S_{\mathfrak q}$  is finite-transversal with respect to ${\mathfrak p}$.  By Proposition \ref{ZarCond} (2) (i), 
	$B\otimes_S S_{\mathfrak q}=B_{\mathfrak p}$. In other words,   $S_{\mathfrak q}\subset  B_{\mathfrak p}$ 
	is finite transversal with respect to ${\mathfrak p}$. 
	\medskip
	
	\noindent (ii) Since $S_{\mathfrak q}\subset  B_{\mathfrak p}$ 
	is finite-transversal with respect to ${\mathfrak p}$, by Proposition  \ref{ZarCond} (2) (ii), $k({\mathfrak p})=k({\mathfrak q})$. Now consider the commutative diagram with vertical  finite extensions, 
	$$\xymatrix{B \ar[r] &  B/{\mathfrak p} \ar[r] & k({\mathfrak p})\\
		S \ar[u] \ar[r] & S/{\mathfrak q} \ar[u]\ar[r]  &  k({\mathfrak q}).\ar@2{-}[u]}$$
	Notice that  $S/{\mathfrak q}\to B/{\mathfrak p}$ is a finite extension  of local rings. Since conditions (2) (i)-(iii) of Proposition \ref{ZarCond} hold for $(S, {\mathfrak n})\to (B,{\mathfrak m})$, the same conditions hold for    $(S/{\mathfrak q},  {\mathfrak n}/{\mathfrak q})\to  (B/{\mathfrak p}, {\mathfrak m}/{\mathfrak p})$. Now   apply  Zariski's multiplicity formula for finite projections (Theorem \ref{ZarCond}) to $S/{\mathfrak q} \to  B/{\mathfrak p}$ to obtain, 
	$$1=e_{S/{\mathfrak q}}\cdot [k({\mathfrak p}): k({\mathfrak q})]= e_{B/{\mathfrak p}}\cdot  [k({\mathfrak m}): k({\mathfrak n}))]=e_{B/{\mathfrak p}},$$ 
	from where the claim in (ii) follows.
	
	\medskip
	
	\noindent (iii) By (ii) if $B/{\mathfrak p}$ is regular, then $S/{\mathfrak q}\subset B/{\mathfrak p} $ is a finite extension of regular local rings with the same quotient field. Since $S/{\mathfrak q}$ is regular, it is normal, and hence $S/{\mathfrak q}= B/{\mathfrak p}$.
\end{proof}

\medskip

\begin{proposition}\label{presentaciones_locales}
{ {\bf (Presentations of finite-transversal extensions)}}
	Suppose that  $(S, {\mathfrak n})\subset (B,{\mathfrak m})$ is finite-transversal with respect to ${\mathfrak m}$.  Let ${\mathfrak p}\subset B$ be a prime ideal with $e_{B_{\mathfrak p}}({\mathfrak p}B_{\mathfrak p})=e_B({\mathfrak m})$, and assume in addition that $B/{\mathfrak p}$ is a regular local ring. Let ${\mathfrak q}={\mathfrak p}\cap S$.  There there are $\theta_1,\ldots, \theta_e\in {\mathfrak p}$ such that: 
	\begin{enumerate}
		\item[(i)] $B=S[\theta_1,\ldots, \theta_e]$; 
		\item[(ii)] ${\mathfrak p}={\mathfrak q}B+\langle \theta_1,\ldots,\theta_e\rangle$.
\end{enumerate}
In addition,
\begin{enumerate}
		\item[(iii)]  ${\mathfrak q}B$ is a reduction of ${\mathfrak p}$ (in $B$). 
	\end{enumerate}
	
\end{proposition}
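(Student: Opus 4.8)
The plan is to use the finite-transversal extension $(S,\mathfrak{n})\subset(B,\mathfrak{m})$ together with the hypotheses $e_{B_{\mathfrak p}}(\mathfrak{p}B_{\mathfrak p})=e_B(\mathfrak{m})$ and $B/\mathfrak{p}$ regular, and to extract the generators $\theta_1,\ldots,\theta_e$ directly from the structure of the extension. By Proposition \ref{ZarCond} applied to $\mathfrak{m}$ (with $\mathfrak{q}=\mathfrak{n}$), the residue field extension $k(\mathfrak{n})\to k(\mathfrak{m})$ is trivial, so $B$ and $S$ share the residue field. The number $e$ should be the excess of embedding dimension along $\mathfrak{p}$; more precisely, since $\mathfrak{q}B$ is a reduction of $\mathfrak{p}$ (which is what (iii) asserts and is essentially forced by the multiplicity hypothesis via Rees' theorem, cf.\ the remark after Proposition \ref{ZarCond}), the images of a regular system of parameters of $B/\mathfrak{p}$ inside $\mathfrak{p}/(\mathfrak{q}B+\mathfrak{m}\mathfrak{p})$ give the data I need.

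First I would establish (i). The extension $S\subset B$ is finite, and since $S$ is regular with fraction field $K=K(S)$ and $L=B\otimes_S K$ has $[L:K]=e_B(\mathfrak{m})$, the generic fiber is a finite-dimensional $K$-algebra. I would choose $\theta_1,\ldots,\theta_e\in\mathfrak{p}$ whose images in $\mathfrak{p}/(\mathfrak{q}B+\mathfrak{m}^2\cap\mathfrak{p})$, equivalently in a suitable graded piece, lift a regular system of parameters of the regular local ring $B/\mathfrak{p}$; the point is that these together with $S$ should generate $B$ as an $S$-algebra. To see that $S[\theta_1,\ldots,\theta_e]=B$ I would argue by Nakayama: set $B'=S[\theta_1,\ldots,\theta_e]$, a finite $S$-module, and show $B'_{\mathfrak m}+\mathfrak{n}B_{\mathfrak m}=B_{\mathfrak m}$, so that $B'=B$ after localizing and then globally since the extension is finite and $S$ is the base. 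Here I would lean on Proposition \ref{extension_transversalidad}(iii), which gives $S/\mathfrak{q}=B/\mathfrak{p}$, to control the residues, and on the regularity of $B/\mathfrak{p}$ to see that $\mathfrak{q}B$ together with the $\theta_i$ account for all of $\mathfrak{p}$ modulo higher order terms.

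Next, (ii) should follow once (i) is in hand: the ideal $\mathfrak{q}B+\langle\theta_1,\ldots,\theta_e\rangle$ is contained in $\mathfrak{p}$ by construction, and for the reverse inclusion I would use that modulo $\mathfrak{q}B$ the ring $B/\mathfrak{q}B$ is generated over $S/\mathfrak{q}=B/\mathfrak{p}$ by the $\theta_i$, whose common zero locus cuts out exactly $\mathfrak{p}$ because the $\theta_i$ map to a regular system of parameters of $B/\mathfrak{p}$. In other words, $\mathfrak{p}/\mathfrak{q}B$ is generated by the classes of the $\theta_i$, which is precisely (ii). Finally (iii) is the reduction statement: by Proposition \ref{extension_transversalidad}(i) the localized extension $S_{\mathfrak q}\subset B_{\mathfrak p}$ is finite-transversal with respect to $\mathfrak{p}$, so condition (2)(iii) of Proposition \ref{ZarCond} holds at $\mathfrak{p}$, namely $e_{B_{\mathfrak p}}(\mathfrak{q}B_{\mathfrak p})=e_{B_{\mathfrak p}}(\mathfrak{p}B_{\mathfrak p})$, which by Rees' theorem (the remark following Proposition \ref{ZarCond}) is equivalent to $\mathfrak{q}B_{\mathfrak p}$ being a reduction of $\mathfrak{p}B_{\mathfrak p}$; since $\mathfrak{p}$ is the top multiplicity prime this descends to $\mathfrak{q}B$ being a reduction of $\mathfrak{p}$ in $B$.

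The main obstacle I anticipate is (i), specifically verifying that finitely many $\theta_i$ lifted from a regular system of parameters of $B/\mathfrak{p}$ genuinely generate $B$ as an $S$-algebra, rather than merely generating $\mathfrak{p}$ modulo $\mathfrak{q}B$. The delicate point is bridging the local-at-$\mathfrak{p}$ information (where finite-transversality and the Cohen structure of $B_{\mathfrak p}$ over $S_{\mathfrak q}$ are cleanest) with the global $S$-algebra generation of $B$; I expect to invoke the presentation $S[\theta]\cong S[Z]/\langle f(Z)\rangle$ from Proposition \ref{CoeffenS} for each generator and a counting argument comparing $\dim_K(L)$ with the product of the minimal-polynomial degrees, together with the equality $S/\mathfrak{q}=B/\mathfrak{p}$, to force $S[\theta_1,\ldots,\theta_e]=B$.
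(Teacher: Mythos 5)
The paper proves this proposition purely by citation (\cite[Lemma 6.4]{V} for (i), \cite[Lemma 3.6]{COA} for (ii) and (iii)), so a self-contained argument would be welcome, but yours has two genuine problems. First, in (i) your description of the $\theta_i$ is incoherent: you want $\theta_1,\ldots,\theta_e\in\mathfrak{p}$ whose images ``lift a regular system of parameters of $B/\mathfrak{p}$,'' but elements of $\mathfrak{p}$ map to zero in $B/\mathfrak{p}$ and cannot lift anything nonzero there; moreover, since $S/\mathfrak{q}=B/\mathfrak{p}$ by Proposition \ref{extension_transversalidad}(iii), a regular system of parameters of $B/\mathfrak{p}$ already comes from $\mathfrak{n}\subset S$, so adjoining lifts of it produces nothing outside $S$ and certainly does not generate $B$ as an $S$-algebra. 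The correct construction runs in the opposite direction: $S\subset B$ is finite, hence $B=S[\theta_1,\ldots,\theta_e]$ for \emph{some} $\theta_i\in B$, and since $S\to B/\mathfrak{p}$ is surjective (again by Proposition \ref{extension_transversalidad}(iii)) each $\theta_i$ may be replaced by $\theta_i-s_i$ with $s_i\in S$ so as to lie in $\mathfrak{p}$, without changing the generated $S$-algebra. With that, (ii) follows because $B/(\mathfrak{q}B+\langle\theta_1,\ldots,\theta_e\rangle)$ is a quotient of $S/\mathfrak{q}$ that still surjects onto $B/\mathfrak{p}=S/\mathfrak{q}$, forcing equality of the two ideals; your justification of (ii) via ``the $\theta_i$ map to a regular system of parameters of $B/\mathfrak{p}$'' repeats the same confusion. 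The counting argument you anticipate (comparing $\dim_K(L)$ with the product of minimal-polynomial degrees) would not repair (i): that product does not compute $\dim_K(L)$ in general.

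Second, in (iii) the step ``since $\mathfrak{p}$ is the top multiplicity prime this descends to $\mathfrak{q}B$ being a reduction of $\mathfrak{p}$ in $B$'' is precisely the content of the claim and is not automatic. The localized statement only gives $\mathfrak{p}B_{\mathfrak{p}}\subseteq\overline{\mathfrak{q}B_{\mathfrak{p}}}=(\overline{\mathfrak{q}B})_{\mathfrak{p}}$, and the module $\mathfrak{p}/(\mathfrak{p}\cap\overline{\mathfrak{q}B})$ could a priori be supported at primes strictly containing $\mathfrak{p}$, so the global containment $\mathfrak{p}\subseteq\overline{\mathfrak{q}B}$ does not follow by localization alone. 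A working argument uses (ii): it suffices to show each $\theta_i\in\overline{\mathfrak{q}B}$, and this holds because the coefficients $a_{i,j}$ of the minimal polynomial of $\theta_i$ satisfy $\nu_{\mathfrak{q}S_{\mathfrak{q}}}(a_{i,j})\geq j$ (by Theorem \ref{Hickel_refinado} applied to the finite-transversal extension $S_{\mathfrak{q}}\subset B_{\mathfrak{p}}$, since $\theta_i\in\mathfrak{p}B_{\mathfrak{p}}$), hence $a_{i,j}\in\mathfrak{q}^{(j)}=\mathfrak{q}^j$ because $S/\mathfrak{q}$ is regular and $S$ is a regular ring containing a field; the minimal polynomial is then an equation of integral dependence of $\theta_i$ over $\mathfrak{q}B$. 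This is essentially the argument of \cite[Lemma 3.6]{COA} that the paper invokes.
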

\begin{proof}
We follow ideas from \cite[Lemma 6.4]{V} for part (i), \cite[Lemma 8.10]{BeBrEn} for part (ii) and  \cite[Lemma 3.6]{COA} for part (iii), where similar results were proven in the context of algebraic varieties defined over perfect fields. To facilitate the reading of the paper we check here that the proofs can be adapted to cover a wider class of rings under the hypotheses of the proposition.
\medskip
	
\noindent (i) Write $B=S[\theta_1',\ldots, \theta_e']$. By Proposition \ref{extension_transversalidad} (iii), $S/{\mathfrak q}= B/{\mathfrak p}$, therefore, for each $i\in \{1,\ldots, e\}$, there is some $s_i\in S$ such that $\theta_i'-s_i\in {\mathfrak p}$. Set $\theta_i:=\theta_i'-s_i$ for $i=1\ldots, e$. Then $B=S[\theta_1,\ldots, \theta_e]$. 
	
	\medskip
	
\noindent (ii) By (i), we can write $B=S[\theta_1,\ldots, \theta_e]$ with $\theta_i\in {\mathfrak p}$ for $i=1,\ldots, e$.  Since  $S\subset B$ is finite-transversal at ${\mathfrak m}$, $S/{\mathfrak n}=B/{\mathfrak m}$, therefore, 
	\begin{equation}
		\label{escritura_maximal}
		{\mathfrak m}={\mathfrak n}+\langle \theta_1,\ldots, \theta_e\rangle.
	\end{equation}
	If ${\mathfrak p}={\mathfrak m}$ we are done. Otherwise,   since  ${\mathfrak q} \subset S$ defines a regular suscheme, there is a regular system of parameters in $S$, $y_1,\ldots, y_d$,  such that ${\mathfrak q}=\langle y_1,\ldots, y_r\rangle$ for some $r\in \{1,\ldots, d-1\}$. Then 
	\begin{equation}
		\label{contenido_primo}
		{\mathfrak q}B+\langle \theta_1,\ldots, \theta_e\rangle \subset {\mathfrak p}.
	\end{equation} 
	Now, 
	$$d-r=\dim(S/{\mathfrak q})=\dim (B/{\mathfrak p})\leq \dim (B/({\mathfrak q}B+\langle \theta_1,\ldots, \theta_e\rangle))\leq d-r,$$
	where the last inequality follows because  by  (\ref{escritura_maximal}),  
	$${\mathfrak m}/({\mathfrak q}+\langle \theta_1+\ldots+\theta_e\rangle)= ({\mathfrak n}+\langle \theta_1,\ldots, \theta_e\rangle)/ ({\mathfrak q}+\langle \theta_1+\ldots+\theta_e\rangle),$$
	and therefore, the maximal ideal ${\mathfrak m}/({\mathfrak q}+\langle \theta_1,\ldots, \theta_e\rangle)$ can be generated by $d-r$ elements. Since $B/{\mathfrak p}$ is an integral domain necessarily the containment in (\ref{contenido_primo}) is an equality. 
	
	\medskip
	
\noindent (iii) By (i) we can assume that $B=S[\theta_1,\ldots, \theta_e]$ with $\theta_i\in {\mathfrak p}$ for $i=1,\ldots, e$.  By Proposition \ref{extension_transversalidad} (i) and by Proposition \ref{ZarCond}, we have that ${\mathfrak q}B_{\mathfrak p}$ is a reduction of ${\mathfrak p}B_{\mathfrak p}$. To see that ${\mathfrak p}$ is the integral closure of ${\mathfrak q}B$ in $B$ it suffices to check this condition at all the maximal ideals containing ${\mathfrak p}$. Since $B$ is local, this amounts to checking this condition at $B$. 
	
	Since $B/{\mathfrak p}$ is a regular local ring, by Proposition \ref{extension_transversalidad} (iii), $S/{\mathfrak q}$ is also a regular local ring. Observe that the multiplicity of $S$ is 1, and so is the   multiplicity of  $S_{\mathfrak q}$.  Hence, by Theorem \ref{Hironaka_Schikhoff} below, $\text{ht}({\mathfrak q})=l({\mathfrak q})$ in $S$.  
	
	Now, since the extension $S\subset B$ is finite, $\text{ht}({\mathfrak q}B)=\text{ht}({\mathfrak q})$, and the blow up of $B$ at ${\mathfrak q}B$ is finite over the  blow up of $S$ at ${\mathfrak q}$. Hence, the fibers over the closed points have the same dimensions, and therefore, $l({\mathfrak q}B)=l({\mathfrak q})$. Therefore, $l({\mathfrak q}B)=\text{ht}({\mathfrak q}B)$ in $B$. Recall that  $e_{B_{\mathfrak p}}({\mathfrak q}B_{\mathfrak p})=e_{B_{\mathfrak p}}({\mathfrak p}B_{\mathfrak p})$,  by Proposition \ref{ZarCond} (iii). Finaly, since ${\mathfrak p}$ is the only minimal prime of ${\mathfrak q}B$, the statement follows from Theorem \ref{Boger} below. 

\end{proof}

\begin{theorem}
	\label{Hironaka_Schikhoff} (Hironaka--Schickhoff, \cite[Corollary 3, p. 121]{Lipman}) Let $(A,M)$ be a formally equidimensional local ring, and let ${\mathfrak p}\subset A$ be a prime ideal so that $A/{\mathfrak p}$ is regular. Then $\text{ht}({\mathfrak p})=l({\mathfrak p})$ in $A$ if and only if the local rings $A$ and $A_{\mathfrak p}$ have the same multiplicity. 
\end{theorem}

\begin{theorem}
	\label{Boger} (B\"oger, \cite[Theorems 2 and 3 p. 115-116]{Lipman}, \cite[Corollary 11.3.2]{Hu_Sw}) Let $(A,M)$ be a formally equidimensional local ring. Fix an ideal $I\subset A$ so that $\text{ht}(I)=l(I)$. Consider an ideal $J\subset A$ so that $I\subset J\subset \sqrt{I}$. Then $I$ is a reduction of $J$ if and only if $e_{A_{\mathfrak q}}(IA_{\mathfrak q})=e_{A_{\mathfrak q}}(JA_{\mathfrak q})$ for each minimal prime ideal ${\mathfrak q}$ of $I$. 
\end{theorem}

\begin{proposition}
	\label{transversal_intermedio} 
{\bf (Intermediate extensions of finite-transversal extensions)}
Suppose  that $(S, {\mathfrak n})\subset (B,{\mathfrak m})$ is   finite-transversal  with respect to ${\mathfrak m}$.  Let ${\mathfrak p}\subset B$ is a prime ideal with $e_{B_{\mathfrak p}}({\mathfrak p}B_{\mathfrak p})=e_B({\mathfrak m})$. Let $B'\subset B$ be an intermediate extension, i.e., $S\subset B'\subset B$,  and  consider the diagram:
	$$\xymatrix@C=1pt@R=1pc{ B  &   &  &  & {\mathfrak m} & {\mathfrak p}\\
		&  & B'  \ar[ull] & \ \  & {\mathfrak m}':={\mathfrak m}\cap B',  & {\mathfrak p}':={\mathfrak p}\cap B' \\
		S \ar[uu] \ar[urr] & & &  & {\mathfrak n}:={\mathfrak m}\cap S &    {\mathfrak q}:={\mathfrak p}\cap S.}$$
	Then:
	\begin{enumerate}
		\item[(i)] The ring $B'$ is local with maximal ideal ${\mathfrak m}'$; 
		\item[(ii)] The extension $S\subset B'$ is finite-transversal with respect to ${\mathfrak m}'$ of generic rank $m'=e_{B'}({\mathfrak m}')$; 
		\item[(iii)] The extension $S_{\mathfrak q}\subset B'_{{\mathfrak p}'}$ is finite-transversal with respect to ${\mathfrak p}'$ and  $e_{B'_{{\mathfrak p}'}}({\mathfrak p}'B'_{{\mathfrak p}'})=m'=e_{B'}({\mathfrak m}')$;
		\item[(iv)] For $b\in B'$, 
		\begin{equation}
			\label{igualdad_ordenes_maxl}
			\nub_{\mathfrak m}(b)=\nub_{{\mathfrak m}'}(b)
		\end{equation} 
		and 
		\begin{equation}
			\label{igualdad_ordenes_primos}
			\nub_{{\mathfrak p}B_{\mathfrak p}}(b)=\nub_{{\mathfrak p}'B_{{\mathfrak p}'}}(b);
		\end{equation} 
		\item[(v)] If $B/{\mathfrak p}$ is regular, then $B'/{\mathfrak p}'$ is regular, and in such case, ${\mathfrak q}B'$ is a reduction of  ${\mathfrak p}'$ in $B'$.  
	\end{enumerate}
\end{proposition}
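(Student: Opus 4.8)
The plan is to establish the five parts in order, reducing everything to the finite-transversality criterion of Proposition~\ref{ZarCond} and, for (iv), to Hickel's formula (Theorem~\ref{Hickel_refinado}). Throughout I use that $S\subseteq B'$ is module-finite ($B'$ is an $S$-submodule of the finite $S$-module $B$ and $S$ is Noetherian) and that $B$ is finite over $B'$. For (i): since $S$ is local, every maximal ideal of $B'$ lies over $\mathfrak n$; lifting it to $B$ by lying-over and using that $\mathfrak m$ is the unique prime of $B$ over $\mathfrak n$ (Proposition~\ref{ZarCond}(2)(i)), it must equal $\mathfrak m\cap B'=\mathfrak m'$, so $(B',\mathfrak m')$ is local.

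For (ii) I would verify the hypotheses of Definition~\ref{DefTransv} for $S\subseteq B'$ and then conditions (2)(i)--(iii) of Proposition~\ref{ZarCond} at $\mathfrak m'$; by that proposition this simultaneously yields finite-transversality and $e_{B'}(\mathfrak m')=[L':K]=:m'$, with $L':=B'\otimes_SK$. Here $B'$ is reduced (a subring of $B$) and no nonzero element of $S$ is a zero divisor in $B'$, so each minimal prime of $B'$ contracts to $(0)$ in $S$ and $B'$ is equidimensional. Condition (2)(i) is part (i), and (2)(ii) follows by squeezing $S/\mathfrak n\subseteq B'/\mathfrak m'\subseteq B/\mathfrak m$ between the equal fields $S/\mathfrak n=B/\mathfrak m$. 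The crux, and the step I expect to be the main obstacle, is (2)(iii): that $\mathfrak n B'$ be a reduction of $\mathfrak m'$, i.e. descending the reduction property from $B$ to the intermediate ring. I would prove it with the valuative criterion for integral closure: for $x\in\mathfrak m'\subseteq\mathfrak m\subseteq\overline{\mathfrak n B}$, and for each minimal prime $\mathfrak P'$ of $B'$ and each valuation $v$ of $Q(B'/\mathfrak P')$ nonnegative on $B'/\mathfrak P'$, choose a minimal prime $\mathfrak P$ of $B$ contracting to $\mathfrak P'$, extend $v$ to a valuation $\tilde v$ of $Q(B/\mathfrak P)$, and use $x\in\overline{\mathfrak n B}$ to obtain $\tilde v(x)\ge\tilde v(\mathfrak n)$; since $\mathfrak n\subseteq B'$ and $x\in B'$, restriction gives $v(x)\ge v(\mathfrak n)$, whence $x\in\overline{\mathfrak n B'}$.

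For (iii) I would localize the tower at $\mathfrak p$, obtaining $S_{\mathfrak q}\subseteq B'_{\mathfrak p'}\subseteq B_{\mathfrak p}$ with $S_{\mathfrak q}\subseteq B_{\mathfrak p}$ finite-transversal by Proposition~\ref{extension_transversalidad}(i). Applying the now-proven parts (i) and (ii) to this localized tower shows $S_{\mathfrak q}\subseteq B'_{\mathfrak p'}$ is finite-transversal with respect to $\mathfrak p'B'_{\mathfrak p'}$ and that $e_{B'_{\mathfrak p'}}(\mathfrak p'B'_{\mathfrak p'})$ equals the generic rank $[B'_{\mathfrak p'}\otimes_{S_{\mathfrak q}}K:K]$. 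It remains to identify this rank with $m'=e_{B'}(\mathfrak m')$, i.e. to see that the generic rank is unchanged under localization at $\mathfrak p'$; this holds because every minimal prime of $B'$ is contained in $\mathfrak p'$. The latter follows from $e_{B_{\mathfrak p}}(\mathfrak p B_{\mathfrak p})=e_B(\mathfrak m)$: by the associativity formula for multiplicities in the reduced equidimensional ring $B$ this equality forces every minimal prime of $B$ to pass through $\mathfrak p$, and minimal primes of $B'$ are contracted from those of $B$.

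Parts (iv) and (v) are then short. For (iv), finite-transversality of $S\subseteq B'$ (resp. of $S_{\mathfrak q}\subseteq B'_{\mathfrak p'}$) lets Theorem~\ref{Hickel_refinado} compute $\nub_{\mathfrak m'}(b)$ (resp. $\nub_{\mathfrak p'B'_{\mathfrak p'}}(b)$) from the minimal polynomial of $b$ over $K$ and the order $\nu_{\mathfrak n}$ (resp. $\nu_{\mathfrak q S_{\mathfrak q}}$). For $b\in B'$ this minimal polynomial is intrinsic to $b$ and $K$, hence the same whether $b$ is viewed in $B'$ or in $B$ (as $B'\otimes_SK\hookrightarrow B\otimes_SK$), so Hickel's formula returns exactly the value it gives for $S\subseteq B$; this is \eqref{igualdad_ordenes_maxl} and \eqref{igualdad_ordenes_primos}. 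For (v), if $B/\mathfrak p$ is regular then $S/\mathfrak q$ is regular by Proposition~\ref{extension_transversalidad}(ii) applied to $S\subseteq B$, so $B'/\mathfrak p'$ is regular by the same proposition applied to the finite-transversal extension $S\subseteq B'$; finally $\mathfrak q B'$ is a reduction of $\mathfrak p'$ in $B'$ by Proposition~\ref{presentaciones_locales}(iii).
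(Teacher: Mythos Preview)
Your proposal is correct, but several steps take a different path from the paper, most notably (ii) and (iv). For (ii) the paper avoids the valuative extension argument by invoking a single fact: since $B'\subset B$ is finite, \cite[Proposition~1.6.1]{Hu_Sw} gives $\overline{\mathfrak nB'}=\overline{\mathfrak nB}\cap B'=\mathfrak m\cap B'=\mathfrak m'$ directly. Your valuation argument re-proves a special case of this. For (iv) the paper does \emph{not} use Hickel at all; instead it uses the characterization~\eqref{nub_integral} together with the same contraction trick: $\mathfrak m'B$ is a reduction of $\mathfrak m$ (squeezed between $\mathfrak nB$ and $\mathfrak m$), so $\overline{(\mathfrak m')^aB}=\overline{\mathfrak m^a}$, and intersecting with $B'$ via \cite[Proposition~1.6.1]{Hu_Sw} yields $\nub_{\mathfrak m}(b)=\nub_{\mathfrak m'}(b)$. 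Your Hickel route is valid, but it imports the equicharacteristic hypothesis of Theorem~\ref{Hickel_refinado}, which is not part of the statement of Proposition~\ref{transversal_intermedio}; the paper's argument works without it. For (v) the paper also squeezes $S/\mathfrak q\subset B'/\mathfrak p'\subset B/\mathfrak p=S/\mathfrak q$ directly rather than invoking Proposition~\ref{extension_transversalidad} for $S\subset B'$, and obtains $\overline{\mathfrak qB'}=\overline{\mathfrak qB}\cap B'=\mathfrak p'$ again via \cite[Proposition~1.6.1]{Hu_Sw}. In short: both proofs are sound, yours is more self-contained in (ii) and more explicit about the generic-rank stability in (iii), while the paper's is more uniform (one integral-closure contraction lemma handles (ii), (iv), and (v)) and slightly more general in (iv).
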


\begin{proof}
 
	(i) This follows from the fact that $B$ is local and the extensions $S\subset B'\subset B$ are finite. 
	
	\medskip 
	
\noindent (ii) It suffices to check that the extension 
$S\subset B'$ satisfies conditions (2)(i)-(iii) of Proposition \ref{ZarCond}. Condition (2)(i) has already been proven, and condition 2(ii) follows from the chain of containments,  $$S/{\mathfrak n}\subset B'/{\mathfrak m}'\subset B/{\mathfrak m}=S/{\mathfrak n}.$$ To check that condition (2)(iii) holds, observe first  that ${\mathfrak n}B$ is a reduction of ${\mathfrak m}$, hence $\overline{{\mathfrak n}B}={\mathfrak m}$. On the other hand, 
	$${\mathfrak n}B\subset {\mathfrak m}'B\subset {\mathfrak m}=\overline{{\mathfrak n}B}.$$
	Since the extension $B'\subset B$ is finite, by \cite[Proposition 1.6.1]{Hu_Sw},  
	$$\overline{{\mathfrak n}B'}=\overline{{\mathfrak n}B}\cap B'={\mathfrak m}\cap B'={\mathfrak m}'.$$
\medskip	

\noindent (iii) By Proposition \ref{extension_transversalidad}(i), the extension $S_{\mathfrak q}\subset B_{\mathfrak p}$ is finite-transversal with respect to ${\mathfrak p}$. Repeating the argument in (ii) we find that $S_{\mathfrak q}\subset B'_{{\mathfrak p}'}$ is finite-transversal with respect to ${\mathfrak p}'$.
See also \cite[Lemma 4.12]{V} for (i), (ii) and (iii), in the case of domains.
\medskip
	
\noindent(iv) Equality (\ref{igualdad_ordenes_maxl})  follows from  (\ref{nub_integral}), from the fact that ${\mathfrak m}'B$ is a reduction of ${\mathfrak m}$, see \cite[Propositions 8.1.5 and 1.6.1]{Hu_Sw}.
Equality (\ref{igualdad_ordenes_primos}) follows similarly applying the previous argument to the finite-transversal extension $S_{\mathfrak q}\subset B'_{{\mathfrak p}'}\subset B_{\mathfrak p}$. 
\medskip

\noindent (v) By Proposition \ref{extension_transversalidad} (ii) and (iii)  $S/{\mathfrak q}$ is regular and, moreover $S/{\mathfrak q}=B/{\mathfrak p}$.
Then the first part of the statement follows.
Finally, by \cite[Proposition 1.6.1]{Hu_Sw} and Proposition \ref{presentaciones_locales}(iii),
we have $\overline{{\mathfrak q}B'}={\mathfrak p}'$.
\end{proof}

\begin{remark}
	\label{factorial} With the same hypotheses and notation as in Proposition \ref{transversal_intermedio}, observe  that if $B'=S[\theta]$ for some $\theta\in B$, then, by Proposition \ref{CoeffenS}, $S[\theta]\simeq S[Z]/\langle f(Z)\rangle$, where $f(Z)\in  S[Z]$ is the minimum polynomial of $\theta$ over $K$, the quotient field of $S$. The degree of $f(Z)$ is  the generic rank of $S\subset S[\theta]$, i.e., the dimension of the $K$-vector space $K\otimes_SS[\theta]$, which is bounded above by    $[L:K]=e_B({\mathfrak m})=m$. Therefore, by Theorem \ref{Hickel_refinado}, $\nub_{\mathfrak m}(\theta)\in \frac{1}{m!}{\mathbb N}$. See also \cite[Theorem 1.1]{Hickel13}. 
\end{remark}

\section{Some natural properties of the asymptotic Samuel function}
\label{seccion_naturalprops}

In this section we are going to explore some natural properties of the asymptotic Samuel function, addressing the proofs of the  results presented in the introduction. 

\begin{theorem}\label{casi_semi_continuo}
Let $B$ be an equidimensional excellent ring containing a field.
Let ${\mathfrak p}_1\subset{\mathfrak p}_2\subset B$ be two prime ideals such that   $e_{B_{\mathfrak{p}_1}}({\mathfrak p}_1B_{\mathfrak{p}_1})=
e_{B_{\mathfrak{p}_2}}({\mathfrak p}_2B_{\mathfrak{p}_2})$. Then $\nub_{{\mathfrak p}_1B_{\mathfrak{p}_1}}(b)\leq
\nub_{{\mathfrak p}_2B_{\mathfrak{p}_2}}(b)$ for $b\in B$.
\end{theorem}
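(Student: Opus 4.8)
The plan is to reduce to a purely local statement and then compute both asymptotic Samuel functions by a single application of Hickel's formula (Theorem \ref{Hickel_refinado}), so that the inequality becomes a term-by-term comparison of orders in a regular ring. First I would localize at ${\mathfrak p}_2$: set $C:=B_{{\mathfrak p}_2}$, ${\mathfrak m}:={\mathfrak p}_2C$ and ${\mathfrak q}:={\mathfrak p}_1C$. Since localization is transitive, $C_{\mathfrak q}=B_{{\mathfrak p}_1}$ and ${\mathfrak q}C_{\mathfrak q}={\mathfrak p}_1B_{{\mathfrak p}_1}$, so the statement becomes $\nub_{{\mathfrak q}C_{\mathfrak q}}(b)\leq\nub_{\mathfrak m}(b)$, while the hypothesis reads $e_{C_{\mathfrak q}}({\mathfrak q}C_{\mathfrak q})=e_C({\mathfrak m})$, i.e. ${\mathfrak q}$ lies in the top multiplicity locus of $C$. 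The ring $C$ is again excellent, and it remains equidimensional because $B$ is excellent (hence catenary) and equidimensional: for every minimal prime ${\mathfrak p}\subseteq{\mathfrak p}_2$ one has $\dim(C/{\mathfrak p}C)=\dim(B/{\mathfrak p})-\dim(B/{\mathfrak p}_2)=\dim C$, independently of ${\mathfrak p}$.

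Next I would invoke the reductions of \S\ref{setting_1}. Using (\ref{nub_reducido}) we may assume $C$ reduced, and passing to a faithfully flat extension (enlarging the residue field to be infinite and completing) we may assume in addition that $C$ is complete with infinite residue field and that there is a finite-transversal extension $S\subset C$ with respect to ${\mathfrak m}$, where $S$ is a power series ring. As recorded at the end of \S\ref{setting_1}, these extensions can be arranged so that they preserve the distinguished prime ${\mathfrak q}$ (replacing it by a dominating prime) together with the top-multiplicity condition, and so that both $\nub_{\mathfrak m}(b)$ and $\nub_{{\mathfrak q}C_{\mathfrak q}}(b)$ are unchanged. Then, setting ${\mathfrak r}:={\mathfrak q}\cap S$, Proposition \ref{extension_transversalidad}(i) shows that the localized extension $S_{\mathfrak r}\subset C_{\mathfrak q}$ is again finite-transversal, this time with respect to ${\mathfrak q}$.

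The heart of the argument is to apply Hickel's formula at both primes using one and the same polynomial. For $b\in C$ let $p(Z)=Z^{\ell}+a_1Z^{\ell-1}+\dots+a_{\ell}\in S[Z]$ be the minimal polynomial of $b$ over $K(S)$, whose coefficients lie in $S$ by Proposition \ref{CoeffenS}. Applying Theorem \ref{Hickel_refinado} to $S\subset C$ gives $\nub_{\mathfrak m}(b)=\min_i\{\nu_{\mathfrak n}(a_i)/i\}$, where ${\mathfrak n}={\mathfrak m}\cap S$ is the maximal ideal of $S$. Since $S_{\mathfrak r}$ and $S$ share the fraction field $K(S)$, the element $b\in C_{\mathfrak q}$ has the very same minimal polynomial $p(Z)$ over $K(S_{\mathfrak r})$; hence applying Theorem \ref{Hickel_refinado} to the finite-transversal extension $S_{\mathfrak r}\subset C_{\mathfrak q}$ (with trivial faithfully flat extension) yields $\nub_{{\mathfrak q}C_{\mathfrak q}}(b)=\min_i\{\nu_{{\mathfrak r}S_{\mathfrak r}}(a_i)/i\}$. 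Finally, because $S$ is regular and ${\mathfrak r}={\mathfrak q}\cap S\subseteq{\mathfrak m}\cap S={\mathfrak n}$, inequality (\ref{desigualdad_orden}) gives $\nu_{{\mathfrak r}S_{\mathfrak r}}(a_i)\leq\nu_{{\mathfrak n}S_{\mathfrak n}}(a_i)=\nu_{\mathfrak n}(a_i)$ for every $i$; taking minima yields $\nub_{{\mathfrak q}C_{\mathfrak q}}(b)\leq\nub_{\mathfrak m}(b)$, as required.

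I expect the main obstacle to lie in the reduction step rather than in the final comparison: one must verify that passing to $C$, to $C_{\mathrm{red}}$, and to the faithfully flat complete extension simultaneously preserves (a) the chosen prime ${\mathfrak q}$, (b) the equality of multiplicities placing ${\mathfrak q}$ in the top stratum, and (c) both values $\nub_{\mathfrak m}(b)$ and $\nub_{{\mathfrak q}C_{\mathfrak q}}(b)$. In particular one needs Hickel's theorem to be genuinely applicable at the localization $C_{\mathfrak q}$, which requires $C_{\mathfrak q}$ to be equidimensional and excellent; the equidimensionality is precisely what is encoded in the finite-transversality of $S_{\mathfrak r}\subset C_{\mathfrak q}$ supplied by Proposition \ref{extension_transversalidad}(i). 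Once these bookkeeping points are settled, the inequality collapses cleanly to the monotonicity of the order function on the regular ring $S$.
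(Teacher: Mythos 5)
Your proposal is correct and follows essentially the same route as the paper's own proof: localize at $\mathfrak{p}_2$, reduce via \S\ref{setting_1} to a reduced complete local ring with a finite-transversal projection $S\subset B$, apply Theorem \ref{Hickel_refinado} to one and the same minimal polynomial at the maximal ideal and at the localization at the smaller prime, and conclude by the monotonicity of orders in the regular ring $S$ (the paper cites \cite{DDGH} for exactly your inequality (\ref{desigualdad_orden})). The only differences are cosmetic: the paper channels the computation through the intermediate ring $S[b]$ via Proposition \ref{transversal_intermedio} instead of applying Hickel directly to $S\subset C$ and $S_{\mathfrak{r}}\subset C_{\mathfrak{q}}$, and in the completion step it records only the inequality $\nub_{\mathfrak{p}_1B_{\mathfrak{p}_1}}(b)\leq \nub_{\mathfrak{p}\widehat{B}_{\mathfrak{p}}}(b)$ for a prime $\mathfrak{p}$ dominating $\mathfrak{p}_1$ (not the equality you assert), which is the direction actually needed.
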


\begin{proof} 
After localizing at $\mathfrak{p}_2$, we can assume that $(B,\mathfrak{m},k)$ is a local ring.
By the arguments in \S \ref{setting_1}, see  (\ref{ExtPrime}), we can start by assuming  that $B$ is reduced.  Consider the ${\mathfrak m}$-adic completion of $B$, $\widehat{B}$. Let   ${\mathfrak p}\subset \widehat{B}$ be a prime dominating ${\mathfrak p}_1B$. Then:
	$$\nub_{{\mathfrak p}_{1}B_{\mathfrak{p}_1}}(b)\leq \nub_{{\mathfrak p}_1\widehat{B}_{\mathfrak p}}(b) \leq \nub_{{\mathfrak p}\widehat{B}_{\mathfrak p}}(b).$$	
Hence, by the arguments detailed in  \S  \ref{setting_1}, to prove the theorem we can assume that $(B,{\mathfrak m})$ is a  reduced complete local ring  and that there is a finite-transversal projection 
$(S,\mathfrak{n})\to (B,\mathfrak{m})$, with $S=k[[y_1,\ldots,y_d]]$, where $k$ is the residue field of $B$, and $\langle y_1,\ldots, y_d\rangle$ generate a reduction of the maximal ideal ${\mathfrak m}$ of $B$. Let ${\mathfrak q}={\mathfrak p}\cap S$. 
	 By Proposition \ref{extension_transversalidad}, the extension $S_{\mathfrak q}\subset B_{\mathfrak p}$ is finite-transversal with respect to ${\mathfrak p}$. Now consider the diagram: 
	$$\xymatrix@R=1pc{B \ar[r] &   B_{\mathfrak p} \\
		B'=S[b] \ar[u] \ar[r] & B'_{{\mathfrak p}'}=S_{\mathfrak q}[b] \ar[u]\\
		S=k[[y_1,\ldots,y_d]] \ar[u] \ar[r] & S_{\mathfrak q}\ar[u]    }$$
	Let ${\mathfrak m}':={\mathfrak m}\cap S[b]$ and   ${\mathfrak p}':={\mathfrak p}\cap S[b]$. By Proposition \ref{transversal_intermedio}, the extension $S\subset S[b]$ is finite-transversal with respect to ${\mathfrak m}'$ of generic rank $\ell=e_{S[b]_{{\mathfrak m}'}}({\mathfrak m}')$ and $S_{\mathfrak q}\subset S_{\mathfrak q}[b]$ is finite transversal with respect to ${\mathfrak p}'$ with the same generic rank,  $\ell=e_{S[b]_{{\mathfrak m}'}}({\mathfrak m}')=e_{B'_{{\mathfrak p}'}}({\mathfrak p}')$. 
	
Using  Proposition \ref{transversal_intermedio} (iv), 
	$$\nub_{{\mathfrak p}B_{\mathfrak p}}(b)=\nub_{{{\mathfrak p}'}B'_{{\mathfrak p}'}}(b)  \  \  \ \text{ and } \  \  \nub_{\mathfrak m}(b)=\nub_{{\mathfrak m}'}(b),$$
	hence, it suffices to proof the theorem for $B'$. Now, by Proposition \ref{CoeffenS},   $S[b]\cong  S[Z]/\langle f(Z)\rangle $, where 
	$f(Z)$ is the minimal polynomial of $r$ over $S$. The degree of this polynomial equals the multiplicity of $S[b]$ at ${\mathfrak m}'$, $\ell$. By Theorem \ref{Hickel_refinado},   if 
	$$f(Z)=Z^{\ell}+a_1Z^{\ell -1}+\ldots+a_l,$$
	then 
	$$\nub_{{\mathfrak m}'}(b)=\min_i\left\{ \frac{\nu_{{\mathfrak  n}}(a_i)}{i}: i=1,\ldots, \ell  \right\}.$$
	Now, observe that $S[b]_{{\mathfrak p}'}=S_{{\mathfrak q}}[b]$, and therefore, again by Proposition \ref{CoeffenS},   $S[b]_{{\mathfrak p}'}=S_{{\mathfrak q}}[Z]\langle f(Z)\rangle$.  Hence, again by Theorem \ref{Hickel_refinado},  
	$$\nub_{{\mathfrak p}'B_{{\mathfrak p}'}'}(b)=\min_i\left\{ \frac{\nu_{{\mathfrak q}}(a_i)}{i}: i=1,\ldots, \ell  \right\}.$$ 
	To conclude by \cite[Theorem 2.11]{DDGH}, 
	for each $i\in \{1,\ldots, \ell\}$, 
	$ \frac{\nu_{{\mathfrak q}}(a_i)}{i}\leq  \frac{\nu_{{\mathfrak  n}}(a_i)}{i}$, 
	thus 
	$\nub_{{\mathfrak p}'B_{{\mathfrak p}'}'}(b)\leq \nub_{{\mathfrak m}'}(b)$. 
\end{proof}
\begin{remark}
    Observe that for a given $b\in B$ the function  
    $$\begin{array}{rrcl}
\nub(b): & \Spec(B) & \longrightarrow & \mathbb{Q}\cup\{\infty\} \\
 & \mathfrak{p} & \mapsto & \nub_{\p B_{\p}}(b)\end{array}$$
might not be upper semicontinuous, even after restricting ourselves to the top multiplicity locus of $B$. See Example \ref{Whitney}, where $\nub_{\p B_{\p}}(\overline{x})=1$, whereas $\nub_{\m}(\overline{x})=(p+1)/p$, for every maximal ideal $\m$ containing $\p$.
\end{remark}
\begin{theorem}\label{casi_semi_continuo_sin_localizar}
Let $B$ be an equidimensional excellent ring containing a field.
Let ${\mathfrak p}\subset B$ be a prime in the top multiplicity locus of $B$ and assume that $B/{\mathfrak p}$ is regular. Then
	$\nub_{\mathfrak p}(b)=\nub_{{\mathfrak p}B_{\mathfrak p}}(b)$ for $b\in B$. 
\end{theorem}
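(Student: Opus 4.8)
The inequality $\nub_{\mathfrak p}(b)\le \nub_{{\mathfrak p}B_{\mathfrak p}}(b)$ always holds, since integral closure of ideals commutes with localization: if $b^c\in\overline{{\mathfrak p}^a}$ then $b^c\in\overline{{\mathfrak p}^aB_{\mathfrak p}}$, and one concludes by (\ref{nub_integral}). The whole content is therefore the reverse inequality, where the hypotheses must genuinely enter, since localization by itself only reproduces the inequality we already have for free.

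The plan is to reduce to a local, complete, reduced situation carrying a finite\nobreakdash-transversal projection and then run a valuation computation. Because integral closure can be checked locally, $\nub_{\mathfrak p}(b)=\inf_{{\mathfrak m}\supseteq{\mathfrak p}}\nub_{{\mathfrak p}B_{\mathfrak m}}(b)$, the infimum taken over the maximal ideals containing ${\mathfrak p}$; moreover, by upper semicontinuity of the multiplicity every such ${\mathfrak m}$ again lies in the top multiplicity locus, and $B_{\mathfrak m}/{\mathfrak p}B_{\mathfrak m}$ is regular. Hence it suffices to prove, for each such ${\mathfrak m}$, that $\nub_{{\mathfrak p}B_{\mathfrak m}}(b)\ge\nub_{{\mathfrak p}B_{\mathfrak p}}(b)$; that is, we may assume $(B,{\mathfrak m})$ local. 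As in the proof of Theorem \ref{casi_semi_continuo} and using the reductions of \S\ref{setting_1}, we may further assume $B$ complete and reduced with a finite\nobreakdash-transversal projection $S=k[[y_1,\dots,y_d]]\to B$; the decisive point here is that the regularity of $B/{\mathfrak p}$ guarantees that ${\mathfrak p}\widehat B$ remains prime (its quotient is the completion of a regular domain) and that both the regularity of the quotient and the multiplicity equality persist, so all hypotheses survive. Writing ${\mathfrak q}={\mathfrak p}\cap S$, Propositions \ref{extension_transversalidad} and \ref{presentaciones_locales} then give that $S/{\mathfrak q}$ is regular and that ${\mathfrak q}B$ is a reduction of ${\mathfrak p}$.

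To compute the localized side, note that by Proposition \ref{extension_transversalidad}(i) the extension $S_{\mathfrak q}\subset B_{\mathfrak p}$ is finite\nobreakdash-transversal with respect to ${\mathfrak p}$. Let $f(Z)=Z^{\ell}+a_1Z^{\ell-1}+\dots+a_{\ell}\in S[Z]$ be the minimal polynomial of $b$ over $K(S)$ (Proposition \ref{CoeffenS}). Applying Hickel's Theorem \ref{Hickel_refinado} to $B_{\mathfrak p}$ yields $\nub_{{\mathfrak p}B_{\mathfrak p}}(b)=\min_i \nu_{{\mathfrak q}S_{\mathfrak q}}(a_i)/i$, and since $S/{\mathfrak q}$ is regular the identity (\ref{igualdad_primo_regular}) gives $\nu_{{\mathfrak q}S_{\mathfrak q}}(a_i)=\nu_{\mathfrak q}(a_i)$, so that $\nub_{{\mathfrak p}B_{\mathfrak p}}(b)=\min_i \nu_{\mathfrak q}(a_i)/i$.

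For the non\nobreakdash-localized side I would argue valuatively, exactly as in the proof of Proposition \ref{ExtFinNuBar}. Because ${\mathfrak q}B$ is a reduction of ${\mathfrak p}$ and $S/{\mathfrak q}$ is regular, the Rees valuations $v_1,\dots,v_s$ of ${\mathfrak p}$ are all extensions to $B$ of the single ${\mathfrak q}$\nobreakdash-adic order valuation $v_0=\nu_{\mathfrak q}$ of $S$, with $v_j|_{K(S)}=e_jv_0$ and $v_j({\mathfrak p})=v_j({\mathfrak q}B)=e_jv_0({\mathfrak q})=e_j$. By (\ref{Samuel_valorativo}), $\nub_{\mathfrak p}(b)=\min_j v_j(b)/e_j$. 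Evaluating $b^{\ell}=-\sum_{i\ge1}a_ib^{\ell-i}$ at each $v_j$ gives the Newton\nobreakdash-polygon estimate $\ell\,v_j(b)\ge\min_{i\ge 1}\big(v_j(a_i)+(\ell-i)v_j(b)\big)$, hence $i\,v_j(b)\ge v_j(a_i)$ for some $i$ and therefore $v_j(b)\ge\min_i v_j(a_i)/i=e_j\min_i \nu_{\mathfrak q}(a_i)/i$. Dividing by $e_j$ and taking the minimum over $j$ gives $\nub_{\mathfrak p}(b)\ge\min_i\nu_{\mathfrak q}(a_i)/i=\nub_{{\mathfrak p}B_{\mathfrak p}}(b)$, which combined with the easy inequality yields equality. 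I expect the main obstacle to be the reduction step rather than the valuation estimate: one must ensure that completing the local ring destroys neither the primality of ${\mathfrak p}$ nor the regularity of $B/{\mathfrak p}$ (both secured precisely by the regularity hypothesis) and that the two asymptotic Samuel values are unchanged; once the finite\nobreakdash-transversal picture is in place, the coincidence of ordinary and symbolic powers of the regular prime ${\mathfrak q}$ makes the rest cheap.
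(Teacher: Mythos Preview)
Your proof is correct and follows essentially the same route as the paper: reduce to a complete reduced local ring with a finite-transversal projection $S\subset B$, compute $\nub_{{\mathfrak p}B_{\mathfrak p}}(b)=\min_i\nu_{{\mathfrak q}S_{\mathfrak q}}(a_i)/i$ via Hickel's formula, identify $\nu_{{\mathfrak q}S_{\mathfrak q}}=\nu_{\mathfrak q}$ using the regularity of $S/{\mathfrak q}$, and bound $\nub_{\mathfrak p}(b)$ below by the same quantity. The only difference is that where the paper cites the first part of Hickel's proof for the inequality $\nub_{\mathfrak p}(b)\geq\min_i\nu_{\mathfrak q}(a_i)/i$, you unpack it explicitly via the Rees valuations of ${\mathfrak p}$ and the Newton-polygon estimate on the minimal polynomial---which is exactly the mechanism behind Proposition~\ref{ExtFinNuBar} and amounts to the same argument.
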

\begin{proof} 
Recall that if $n,\ell\in {\mathbb N}$, $\ell\neq 0$,  by (\ref{nub_integral}), $\nub_{{\mathfrak p}}(b)\geq n/\ell$ if and only if $b^{\ell}\in \overline{{\mathfrak p}^n}$. On the other hand, by  \cite[Proposition 1.1.4(4)]{Hu_Sw},
$\overline{{\mathfrak p}^n}B_{\mathfrak m}=\overline{{\mathfrak p}^nB_{\mathfrak m}}$ for all maximal ideals ${\mathfrak m}\subset B$. As a consequence,  $b^{\ell}\in \overline{{\mathfrak p}^n}$ if and only if, $b^{\ell}\in \overline{{\mathfrak p}^nB_{\mathfrak m}}$ for all maximal ideals ${\mathfrak m}\subset B$. Thus,  
  it suffices 
to prove that the  equality in the statement holds after localizing at each maximal ideal
$\mathfrak{m}\supset\mathfrak{p}$.
Hence we can assume that $(B,\mathfrak{m})$ is local and that
$e_{B_{\mathfrak p}}({\mathfrak p}B_{\mathfrak p})=e_B({\mathfrak m})$.
By the arguments in \S \ref{setting_1}, see (\ref{ExtPrime}), and the discussion at the beginning of the proof of Theorem \ref{casi_semi_continuo}, we can assume that $(B,{\mathfrak m})$ is reduced complete local ring and that there is a finite-transversal extension $S\subset B$. 
Let $p(z)=z^{\ell}+a_1z^{\ell-1}+\ldots+a_{\ell}\in S[z]$ be the minimal polynomial of $b$ over $S$, and let $\mathfrak{q}=\mathfrak{p}\cap S$.
Then, following the arguments in the first part of the proof of \cite[Theorem 2.1]{Hickel13}, $$\nub_{\mathfrak p}(b)\geq \min \left\{ \frac{\nu_{\mathfrak q}(a_i)}{i}: i=1,\ldots,m\right\}.$$
	 By  Proposition \ref{extension_transversalidad}, the prime  ${\mathfrak q}$ defines a regular prime in $\Spec(S)$. Hence, since 
  $S$ is regular and contains a field   we have that  the ordinary and symbolic powers of ${\mathfrak q}$ coincide. Therefore, 
	$$\nub_{\mathfrak p}(b)\geq \min \left\{ \frac{\nu_{\mathfrak q}(a_i)}{i}: i=1,\ldots,m\right\}=\min \left\{ \frac{\nu_{{\mathfrak q}S_{\mathfrak q}}(a_i)}{i}: i=1,\ldots,m\right\}=\nub_{\mathfrak pB_{\mathfrak p}}(b)\geq \nub_{\mathfrak p}(b).$$
\end{proof}
 
As indicated in the introduction, for a local ring   $(B, {\mathfrak m})$, the filtration $\{{\mathfrak m}^{\geq r}\}_{r\in {\mathbb Q}_{\geq 0}}$ leads us to the consideration of the graded ring $\overline{\Gr}_{\mathfrak m}(B)$, see \cite{LejeuneTeissier1974}.
Since $B$ is Noetherian, $\overline{\Gr}_{\mathfrak m}(B)$ is graded over the rationals with bounded denominators, i.e., there is some $\ell\in {\mathbb N}_{\geq 1}$ such that 
$\overline{\Gr}_{\mathfrak m}(B)=\bigoplus_{r\in \frac{1}{\ell}\mathbb{N}_{\geq 0}}{\mathfrak m}^{\geq r}/{\mathfrak m}^{> r}$. If in addition we impose that $B$ is excellent, reduced, equidimensional and equicharacteristic, then by Remark \ref{factorial}, $\ell$ can be taken as $m!$, where $m$ is the multiplicity of the local ring $B$. 

\begin{theorem}\label{graduado_barra_finito} 
{Let $(B,{\mathfrak m},k)$ be an excellent local ring. Then $\overline{\Gr}_{\mathfrak m}(B)$ is a $k$-algebra of finite type.} 
\end{theorem}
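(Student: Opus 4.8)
The plan is to reduce the statement to the finite generation of a single Rees algebra, and then to extract that finiteness from Rees's theorem on integral closures of powers of an ideal together with the excellence hypothesis. First I would reduce to the reduced case: nilpotent elements have infinite asymptotic Samuel order, so by (\ref{nub_reducido}) the surjection $B\to B_{\text{red}}$ carries ${\mathfrak m}^{\ge r}$ onto $({\mathfrak m}_{\text{red}})^{\ge r}$ with kernel $\mathrm{Nil}(B)\subseteq {\mathfrak m}^{>r}$ for every $r$; hence $\overline{\Gr}_{\mathfrak m}(B)\cong \overline{\Gr}_{{\mathfrak m}_{\text{red}}}(B_{\text{red}})$, and since $B_{\text{red}}$ is again equicharacteristic, equidimensional and excellent, I may assume $B$ reduced.

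Next I would rescale the grading. By the discussion preceding the statement (using (\ref{Samuel_valorativo}) and Remark \ref{factorial}) there is an integer $\ell\ge 1$, e.g.\ $\ell=m!$ with $m=e_B({\mathfrak m})$, such that $\nub_{\mathfrak m}$ takes values in $\tfrac1\ell{\mathbb N}$. Putting $J_n:={\mathfrak m}^{\ge n/\ell}$, the fact that consecutive values of $\nub_{\mathfrak m}$ differ by $1/\ell$ gives ${\mathfrak m}^{>n/\ell}=J_{n+1}$, so after reindexing $\overline{\Gr}_{\mathfrak m}(B)\cong\bigoplus_{n\ge0}J_n/J_{n+1}$, the associated graded ring of the filtration $J_\bullet$. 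Every element of ${\mathfrak m}$ satisfies $\nub_{\mathfrak m}\ge 1$, so $J_1={\mathfrak m}$ and the degree-zero part is $k$; consequently it suffices to prove that the Rees algebra $\mathcal R:=\bigoplus_{n\ge0}J_nT^n\subset B[T]$ is a finitely generated $B$-algebra, as its homogeneous algebra generators then map onto a finite set of $k$-algebra generators of $\overline{\Gr}_{\mathfrak m}(B)$.

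The heart of the argument is the finiteness of $\mathcal R$. By (\ref{nub_integral}), $J_{m\ell}={\mathfrak m}^{\ge m}=\overline{{\mathfrak m}^m}$, so the $\ell$-th Veronese subring $\mathcal R^{(\ell)}=\bigoplus_{m}\overline{{\mathfrak m}^m}T^{m\ell}$ is isomorphic to the normalized Rees algebra $\bigoplus_m\overline{{\mathfrak m}^m}U^m$. Because $B$ is reduced and excellent it is analytically unramified, so by Rees's theorem this ring is module-finite over $B[{\mathfrak m} T]$; being module-finite over a finitely generated algebra of the excellent ring $B$, it is itself excellent, hence Nagata. Moreover, for homogeneous $fT^n\in\mathcal R$ we have $\nub_{\mathfrak m}(f^\ell)=\ell\,\nub_{\mathfrak m}(f)\ge n$, whence $(fT^n)^\ell=f^\ell T^{n\ell}\in\mathcal R^{(\ell)}$, so $\mathcal R$ is integral over $\mathcal R^{(\ell)}$. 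Finally $T^\ell\in Q(\mathcal R^{(\ell)})$, so $T$ is a root of $X^\ell-T^\ell$ over $Q(\mathcal R^{(\ell)})$ and $\mathcal R\subset B[T]$ sits inside the finite extension $Q(\mathcal R^{(\ell)})\subset Q(B[T])$; as $\mathcal R^{(\ell)}$ is Nagata, its integral closure in this finite extension is module-finite, and since $\mathcal R^{(\ell)}\subseteq\mathcal R$ is an intermediate ring, $\mathcal R$ is a finite $\mathcal R^{(\ell)}$-module, hence a finitely generated $B$-algebra.

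I expect the last step to be the main obstacle: one must verify that $\mathcal R$ genuinely lies inside a \emph{module-finite} integral closure of $\mathcal R^{(\ell)}$, which requires care with the total rings of fractions in the reduced non-domain case, and it is precisely here that excellence is used twice — through analytic unramifiedness (to invoke Rees's theorem for $\mathcal R^{(\ell)}$) and through the Nagata property (to bound $\mathcal R$).
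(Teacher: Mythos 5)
Your proof is correct and follows essentially the same route as the paper: both identify $\overline{\Gr}_{\mathfrak m}(B)$ as a quotient of the graded algebra $\bigoplus_n{\mathfrak m}^{\geq n/\ell}$, observe that this algebra is integral over the ordinary Rees algebra of ${\mathfrak m}$ (via $f^\ell\in\overline{{\mathfrak m}^n}$) inside a degree-$\ell$ extension of the total quotient ring, and invoke excellence to get module-finiteness of the integral closure. The only difference is your detour through the Veronese subring $\mathcal R^{(\ell)}$ and Rees's theorem on analytically unramified rings, which is harmless but redundant: the paper passes directly from $B[{\mathfrak m}W]$ to $\mathcal H$ using that the integral closure of the excellent ring $B[{\mathfrak m}W]$ in $L(W^{1/\ell})$ is already module-finite.
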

\begin{proof}  First observe that by (\ref{nub_reducido}) and from the definition of $\overline{\Gr}_{\mathfrak m}(B)$, we have that 
	$\overline{\Gr}_{\mathfrak m}(B)= \overline{\Gr}_{\mathfrak m}(B_{\text{red}})$, hence we may assume that $B$ is reduced. If $(B,{\mathfrak m})$ is regular there is nothing to prove. Otherwise,   
	 let $\ell:=m!$, where $m$ is the multiplicity of the local ring $B$. Let  
$${\mathcal G}:=B\oplus 0W^{\frac{1}{\ell}}\oplus \ldots \oplus  0W^{\frac{\ell-1}{\ell}}\oplus {\mathfrak m}W\oplus   0W^{\frac{\ell+1}{\ell}}\oplus \ldots \oplus  0W^{\frac{2\ell-1}{\ell}}\oplus {\mathfrak m}^2W^2\oplus \ldots,$$
i.e., ${\mathcal G}=\oplus_{n\in {\mathbb N}}{\mathfrak m}^{\frac{n}{\ell}}W^{\frac{n}{\ell}},$
where ${\mathfrak m}^0=B$, ${\mathfrak m}^{\frac{n}{\ell}}=(0)$ if $\frac{n}{\ell}\notin {\mathbb N}$, and $W$ is a variable that helps us keep track of the grading.  Define also, 
$${\mathcal H}:= B\oplus {\mathfrak m}^{ \geq\frac{1}{\ell}}W^{\frac{1}{\ell}}\oplus \ldots \oplus  {\mathfrak m}^{ \geq\frac{\ell-1}{\ell}}W^{\frac{\ell-1}{\ell}}\oplus {\mathfrak m}^{\geq 1}W\oplus   {\mathfrak m}^{ \geq\frac{\ell+1}{\ell}} W^{\frac{\ell+1}{\ell}}\oplus \ldots \oplus  {\mathfrak m}^{ \geq\frac{2\ell-1}{\ell}}W^{\frac{2\ell-1}{\ell}}\oplus {\mathfrak m}^{\geq 2}W^2\oplus \ldots,$$
i.e.,
${\mathcal H}=\oplus_{n\in {\mathbb N}}{\mathfrak m}^{\geq \frac{n}{\ell}}W^{\frac{n}{\ell}}$.

Then,  there is a containment of graded algebras ${\mathcal G}\subset {\mathcal H}$. Observe that  ${\mathcal G}$ is finitely generated over $B$ and that ${\mathcal H}$ is integral over ${\mathcal G}$, since, for a homogeneous element $fW^{\frac{n}{\ell}}\in {\mathcal H}$, we have that 
$$f^{\ell}\in \overline{{\mathfrak m}^n}.$$
Now,  $B$ is excellent,  and hence  so is ${\mathcal G}$. Let $L$ be the total quotient field of $B$. The  extension $L(W)\subset L(W^{\frac{1}{\ell}})$ is finite and  therefore  the  integral closure of ${\mathcal G}$ in $L(W^{\frac{1}{\ell}})$, $\overline{\mathcal G}$,  is finite over ${\mathcal G}$. Since 
${\mathcal G}\subset {\mathcal H}\subset \overline{\mathcal G}$, it follows that ${\mathcal H}$ is finite over ${\mathcal G}$, hence finitely generated over $B$. To conclude notice that 
$\overline{\Gr}_{\mathfrak m}(B)$ is a quotient of    ${\mathcal H}$. 
	\end{proof}

\section{Finiteness of the Samuel slope}
\label{seccion_finitud}

We devote the last sections of this paper to study properties of
the Samuel slope function defined in \S\ref{DefSamSlope}. Here we address the proofs of Theorem \ref{main_theorem} and Theorem \ref{ThSlopeReduced}.

In this section, and also in section \ref{SeccFFlatExt}, we will be using two main facts:
first, that the Samuel slope can be computed in the completion of the local ring
(Proposition \ref{Prop_Slope_etale}),
and second, if we are given a finite-transversal extension
$(S,\mathfrak{n})\to(B,\mathfrak{m})$ then, there is a procedure to approximate
the Samuel slope of $B$, using 
translations with elements in $S$ (Proposition \ref{base_ker}).

\begin{proposition} \label{Prop_Slope_etale} 
\cite[Proposition 3.10]{BeBrEn}
Let $(B,\mathfrak{m},k)$ be a Noetherian local ring.
\begin{itemize}
\item $(B,\mathfrak{m},k)\to(B',\mathfrak{m}',k')$ is an \'etale homomorphism
such that $k=k'$ then $\SSl(B)=\SSl(B')$.
\item If $(\hat{B},\widehat{\mathfrak{m}},k)$ denotes the
${\mathfrak m}$-adic completion of $B$ then 
$\SSl(B)=\SSl(\hat{B})$.
\end{itemize}
\end{proposition}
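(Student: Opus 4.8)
The plan is to treat both statements together, observing that in each case we are dealing with a faithfully flat local homomorphism $(B,\mathfrak{m},k)\to(B',\mathfrak{m}',k)$ with $\mathfrak{m}B'=\mathfrak{m}'$ and the \emph{same} residue field: for the completion this is $B'=\widehat B$, and an \'etale local homomorphism with $k=k'$ is flat (hence faithfully flat, being local) and unramified, so $\mathfrak{m}B'=\mathfrak{m}'$. The first step I would carry out is to record that all ingredients of the definition of $\SSl$ are preserved by such an extension. Indeed $\dim(B)=\dim(B')$, the induced map $\mathfrak{m}/\mathfrak{m}^2\to\mathfrak{m}'/\mathfrak{m}'^2$ is a $k$-linear isomorphism, so the excess of embedding dimension $t=\text{exc-emb-dim}(B)$ is unchanged; and by the faithful-flatness invariance of the asymptotic Samuel function recalled in \S\ref{setting_1} (namely $\nub_{\mathfrak{m}}(b)=\nub_{\mathfrak{m}'}(b)$ for $b\in B$, see \cite[Proposition 1.6.2]{Hu_Sw}), the description $\ker(\lambda_{\mathfrak{m}})=\{\bar x\in\mathfrak{m}/\mathfrak{m}^2:\nub_{\mathfrak{m}}(x)>1\}$ shows that the isomorphism $\mathfrak{m}/\mathfrak{m}^2\cong\mathfrak{m}'/\mathfrak{m}'^2$ restricts to an isomorphism $\ker(\lambda_{\mathfrak{m}})\cong\ker(\lambda_{\mathfrak{m}'})$. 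In particular $B$ is regular (resp.\ extremal, resp.\ non-regular and non-extremal) if and only if $B'$ is, so the cases $\SSl=\infty$ and $\SSl=1$ follow at once, and only the extremal case needs argument.

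For the completion I would establish $\SSl(B)=\SSl(\widehat B)$ by a double inequality phrased in terms of $\lambda_{\mathfrak{m}}$-sequences. The inequality $\SSl(B)\le\SSl(\widehat B)$ is the easy one: a $\lambda_{\mathfrak{m}}$-sequence $\gamma_1,\dots,\gamma_t$ in $B$ maps to elements of $\widehat{\mathfrak{m}}$ with the same classes in $\widehat{\mathfrak{m}}/\widehat{\mathfrak{m}}^2\cong\mathfrak{m}/\mathfrak{m}^2$, hence is a $\lambda_{\widehat{\mathfrak{m}}}$-sequence, and $\nub_{\mathfrak{m}}(\gamma_i)=\nub_{\widehat{\mathfrak{m}}}(\gamma_i)$, so $\text{sl}$ is unchanged; taking the supremum gives the bound. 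The reverse inequality is the core of the proof and is where the real work lies: given a $\lambda_{\widehat{\mathfrak{m}}}$-sequence $\widehat\gamma_1,\dots,\widehat\gamma_t$ in $\widehat B$ with $s:=\min_i\nub_{\widehat{\mathfrak{m}}}(\widehat\gamma_i)$, I would fix an integer $N>\max\{2,s\}$ and approximate each $\widehat\gamma_i$ by $\gamma_i\in\mathfrak{m}$ with $\widehat\gamma_i-\gamma_i\in\widehat{\mathfrak{m}}^N$, which is possible since $B/\mathfrak{m}^N\cong\widehat B/\widehat{\mathfrak{m}}^N$ and $\widehat{\mathfrak{m}}\cap B=\mathfrak{m}$. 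Because $N\ge 2$, the $\gamma_i$ have the same classes modulo $\widehat{\mathfrak{m}}^2$ as the $\widehat\gamma_i$, so $\gamma_1,\dots,\gamma_t$ is again a $\lambda_{\mathfrak{m}}$-sequence; and since $\nub_{\widehat{\mathfrak{m}}}(\widehat\gamma_i-\gamma_i)\ge N>s$, the order-function inequality $\nub(a+b)\ge\min\{\nub(a),\nub(b)\}$ forces $\nub_{\widehat{\mathfrak{m}}}(\gamma_i)\ge s$, whence $\nub_{\mathfrak{m}}(\gamma_i)=\nub_{\widehat{\mathfrak{m}}}(\gamma_i)\ge s$. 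Thus $\SSl(B)\ge\min_i\nub_{\mathfrak{m}}(\gamma_i)\ge s$ for every such sequence, and passing to the supremum gives $\SSl(B)\ge\SSl(\widehat B)$.

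For the \'etale statement I would avoid repeating the approximation and instead reduce it to the completion case: an \'etale local homomorphism with trivial residue field extension induces an isomorphism of $\mathfrak{m}$-adic completions $\widehat B\cong\widehat{B'}$, and combining this with the previous paragraph yields $\SSl(B)=\SSl(\widehat B)=\SSl(\widehat{B'})=\SSl(B')$. The step I expect to be the main obstacle is the reverse inequality in the completion case, that is, making the approximation argument robust: one must simultaneously preserve the $\lambda$-sequence property (guaranteed by $N\ge 2$) and not decrease the asymptotic Samuel order (guaranteed by $N>s$), do this for all $t$ members at once, and arrange it for sequences whose minimum tends to the supremum $\SSl(\widehat B)$. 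A subtlety to handle carefully is the possibility that $\nub_{\widehat{\mathfrak{m}}}(\widehat\gamma_i)=\infty$ when $B$ is not reduced; the estimate above still gives $\nub(\gamma_i)\ge s$ in that case, so no reduction step is strictly needed, although one could alternatively invoke (\ref{nub_reducido}) to pass to $B_{\text{red}}$ at the outset.
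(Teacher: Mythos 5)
Your proof is correct, but it takes a genuinely different route from the paper. The paper disposes of the proposition in two lines: the \'etale statement with $k=k'$ is quoted directly from \cite[Proposition 3.10]{BeBrEn}, and the completion statement is obtained by observing that the same proof applies because $\Gr_{\mathfrak m}(B)=\Gr_{\widehat{\mathfrak m}}(\widehat B)$ (the argument there runs through the identification of the relevant graded objects). You instead reverse the direction of reduction: you prove the completion case first, by a self-contained two-sided inequality in which a $\lambda_{\widehat{\mathfrak m}}$-sequence is approximated modulo $\widehat{\mathfrak m}^N$ with $N>\max\{2,s\}$ by elements of $B$ --- using $B/\mathfrak m^N\cong\widehat B/\widehat{\mathfrak m}^N$, the invariance $\nub_{\mathfrak m}=\nub_{\widehat{\mathfrak m}}$ on $B$ under faithfully flat extensions with $\mathfrak m\widehat B=\widehat{\mathfrak m}$, and the order-function inequality --- and then deduce the \'etale case from the isomorphism $\widehat B\cong\widehat{B'}$ that a local-\'etale map with trivial residue extension induces on completions. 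Your preliminary reductions (preservation of $\dim$, of $\mathfrak m/\mathfrak m^2$, and of $\ker(\lambda_{\mathfrak m})$ via its description as $\{\bar x:\nub_{\mathfrak m}(x)>1\}$) are all sound, so the regular, non-extremal and extremal cases are correctly separated. What your approach buys is independence from the external reference and an explicit mechanism (the $\widehat{\mathfrak m}^N$-approximation) in place of the graded-ring identification; what the paper's buys is brevity. One small imprecision: when $s=\min_i\nub_{\widehat{\mathfrak m}}(\widehat\gamma_i)=\infty$ you cannot literally choose $N>s$; the fix is simply to let $N$ range over all integers, obtaining $\nub_{\mathfrak m}(\gamma_i)\geq\min\{s,N\}=N$ for every $N$ and hence $\SSl(B)=\infty$, which is in the spirit of what you wrote but not quite the statement ``the estimate still gives $\nub(\gamma_i)\geq s$''.
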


\begin{proof} 
The first assertion is Proposition 3.10 in \cite{BeBrEn}.
And the same proof applies to the completion, since it is enough to observe
that $\Gr_{\mathfrak{m}}(B)=\Gr_{\widehat{\mathfrak{m}}}(\hat{B})$.
\end{proof}

\begin{proposition} \cite[Lemma 8.9]{BeBrEn}
\label{base_ker}
Let $(S,\mathfrak{n})\to(B,\mathfrak{m})$ be a finite-transversal extension.
Write $B=S[\theta_1,\ldots, \theta_e]$ for some $\theta_i\in B$,
$i=1,\ldots,e$.
Set $d=\dim(S)=\dim(B)$. Suppose that the embedding dimension of $B$ is $d+t$, with $t>0$, and
that $\SSl(B)>1$.
Write  $\mathfrak{n}=\langle y_1,\ldots, y_d\rangle$ for some 
$y_i\in S$, $i=1,\ldots,d$.
Then, there are $s_i\in S$, $i=1,\ldots,e$ such that,
after reordering the elements $\theta_i$:
\begin{itemize}
\item[(i)]  $B=S[\theta'_1,\ldots, \theta'_e]$, where $\theta_i'=\theta_i+s_i$, and
\item[(ii)] $\{y_1,\ldots, y_d, \theta'_1, \ldots, \theta'_t\}$  is a minimal set of generators of $\mathfrak{m}$ with $t\leq e$. 
\end{itemize}
Furthermore, 
\begin{itemize} 
\item[(iii)] For a given  a $\lambda_{\mathfrak{m}}$-sequence,
$\{\delta_1,\ldots,\delta_t\}\subset B$, there are $s'_i\in S$, $i=1,\ldots,e$,
such that if $\theta''_i:=\theta_i+s'_i$ then,
\begin{itemize}
\item[(a)] $B=S[\theta_1'',\ldots, \theta_e'']$,
\item[(b)]
$\min \{\nub_{\mathfrak{m}}(\theta''_i)\mid i=1,\ldots, t, \ldots, e\}=
\min\{\nub_{\mathfrak{m}}(\theta''_i): i=1,\ldots, t\}\geq
\min \{\nub_{\mathfrak{m}}(\delta_i): i=1,\ldots, t\}$ and,
\item[(c)] $\{\theta_1'',\ldots, \theta_t''\}$ is a $\lambda_{\mathfrak{m}}$-sequence.
\end{itemize}
\end{itemize}	
\end{proposition}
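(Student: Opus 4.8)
The plan is to work directly with the given finite-transversal extension $(S,\mathfrak{n})\to(B,\mathfrak{m})$, so that, by Proposition \ref{ZarCond}, $\mathfrak{n}B$ is a reduction of $\mathfrak{m}$, the residue fields agree, and $\mathfrak{m}$ is the only prime over $\mathfrak{n}$; no passage to the completion is needed. The recurring tool is that, by Proposition \ref{ExtFinNuBar} (see also Proposition \ref{MiniMax}), the restriction of $\nub_{\mathfrak{m}}$ to $S$ is the $\mathfrak{n}$-adic order $\nu_{\mathfrak{n}}$. For (i)--(ii) I would first use $k(\mathfrak{m})=k(\mathfrak{n})$ to subtract from each $\theta_i$ a lift in $S$ of its residue, obtaining $\theta'_i=\theta_i+s_i\in\mathfrak{m}$ with $s_i\in S$; adjoining elements of $S$ does not change the subalgebra, so $B=S[\theta'_1,\dots,\theta'_e]$. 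Since $\mathfrak{n}B$ is $\mathfrak{m}$-primary, $B/\mathfrak{n}B$ is Artinian local with maximal ideal generated by the images of the $\theta'_i$, whence $\mathfrak{m}=\mathfrak{n}B+\langle\theta'_1,\dots,\theta'_e\rangle$ and $\{\bar y_1,\dots,\bar y_d,\bar\theta'_1,\dots,\bar\theta'_e\}$ spans $\mathfrak{m}/\mathfrak{m}^2$. The classes $\bar y_1,\dots,\bar y_d$ are linearly independent: a nontrivial combination $z=\sum_k c_k y_k$ ($c_k\in S$, some $c_k$ a unit) has $\nu_{\mathfrak{n}}(z)=1$, hence $\nub_{\mathfrak{m}}(z)=1$, so $z\notin\mathfrak{m}^2$. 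The same computation shows $V:=\langle\bar y_1,\dots,\bar y_d\rangle$ meets $\ker(\lambda_{\mathfrak{m}})$ only in $0$. As $\SSl(B)>1$ forces the extremal case, $\dim_k\ker(\lambda_{\mathfrak{m}})=t$ and $\dim_k\mathfrak{m}/\mathfrak{m}^2=d+t$, so $\mathfrak{m}/\mathfrak{m}^2=V\oplus\ker(\lambda_{\mathfrak{m}})$; reordering the $\theta'_i$ so that $\bar\theta'_1,\dots,\bar\theta'_t$ project to a basis of $\ker(\lambda_{\mathfrak{m}})$ gives (ii).

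For (iii) the conceptual crux is an observation I would isolate first: \emph{any} $\lambda_{\mathfrak{m}}$-sequence $\{\delta_1,\dots,\delta_t\}$ generates $B$ as an $S$-algebra, i.e. $B=S[\delta_1,\dots,\delta_t]$. Indeed $\{\bar y_1,\dots,\bar y_d,\bar\delta_1,\dots,\bar\delta_t\}$ is a basis of $\mathfrak{m}/\mathfrak{m}^2$ (the $\bar\delta_i$ form a basis of $\ker(\lambda_{\mathfrak{m}})$, complementary to $V$), so $\{y_k,\delta_i\}$ generate $\mathfrak{m}$; then the images of the $\delta_i$ generate the maximal ideal of the Artinian local ring $B/\mathfrak{n}B$, giving $S[\delta_1,\dots,\delta_t]+\mathfrak{n}B=B$, and Nakayama applied to the finite $S$-module $B$ yields $B=S[\delta_1,\dots,\delta_t]$. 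Now I would write each $\theta_i=P_i(\delta_1,\dots,\delta_t)$ with $P_i\in S[T_1,\dots,T_t]$ and set $s'_i:=-P_i(0)\in S$, $\theta''_i:=\theta_i+s'_i=\sum_{|\alpha|\ge 1}c^{(i)}_\alpha\,\delta^\alpha$ (a finite sum, $c^{(i)}_\alpha\in S$). Writing $\rho:=\min_l\nub_{\mathfrak{m}}(\delta_l)$, the order-function axioms together with $\nub_{\mathfrak{m}}(\delta_l^{m})=m\,\nub_{\mathfrak{m}}(\delta_l)$ give
\[
\nub_{\mathfrak{m}}(\theta''_i)\ \ge\ \min_{\alpha}\Big(\nub_{\mathfrak{m}}\big(c^{(i)}_\alpha\big)+\sum_{l}\alpha_l\,\nub_{\mathfrak{m}}(\delta_l)\Big)\ \ge\ \rho,\qquad i=1,\dots,e.
\]
Since $\theta''_i=\theta_i+s'_i$ with $s'_i\in S$ we have $B=S[\theta''_1,\dots,\theta''_e]$, so (a) holds and the inequality in (b) is established.

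It remains to secure (c) and the equality in (b). Each $\theta''_i$ lies in $\mathfrak{m}$, and $\{y_k,\theta''_1,\dots,\theta''_t\}$ still generates $\mathfrak{m}$ (it differs from $\{y_k,\theta'_1,\dots,\theta'_t\}$ by elements of $\mathfrak{n}B$); as these $d+t$ classes minimally generate $\mathfrak{m}/\mathfrak{m}^2=V\oplus\ker(\lambda_{\mathfrak{m}})$ with $\bar y_k\in V$ and $\bar\theta''_i\in\ker(\lambda_{\mathfrak{m}})$, the classes $\bar\theta''_1,\dots,\bar\theta''_t$ must form a basis of $\ker(\lambda_{\mathfrak{m}})$, which is (c). Repeating the Nakayama argument for this sequence gives $B=S[\theta''_1,\dots,\theta''_t]$. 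For the equality in (b) I would then perform a second $S$-translation on the remaining generators: for $j>t$ write $\theta''_j=Q_j(\theta''_1,\dots,\theta''_t)$ with $Q_j\in S[T]$ and replace $\theta''_j$ by $\theta''_j-Q_j(0)$ (still of the form $\theta_j+(\text{element of }S)$, so (a) is preserved); the same axiomatic estimate now gives $\nub_{\mathfrak{m}}(\theta''_j)\ge\min_{l\le t}\nub_{\mathfrak{m}}(\theta''_l)$, so the global minimum is attained among the first $t$ generators, yielding $\min_i\nub_{\mathfrak{m}}(\theta''_i)=\min_{i\le t}\nub_{\mathfrak{m}}(\theta''_i)\ge\rho$.

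The main obstacle is precisely the isolation and proof of the generation statement $B=S[\delta_1,\dots,\delta_t]$. A priori one is only permitted to modify the given generators $\theta_i$ by single elements of $S$ (not to take $B$-linear combinations of them), so matching the order of an \emph{arbitrary} $\lambda_{\mathfrak{m}}$-sequence looks delicate and suggests an iterative approximation; but once one knows that the $\delta_i$ themselves generate $B$ over $S$, every $\theta_i$ is an $S$-polynomial in the high-order elements $\delta_l$, and the desired bound is forced by the order-function axioms alone. The rest is bookkeeping: checking that all reorderings and the two successive translations are by elements of $S$ and are mutually consistent, and the (already used) identification of $\ker(\lambda_{\mathfrak{m}})$ with the space of classes of $\nub_{\mathfrak{m}}$-order $>1$, which is what lets one read off bases of $\ker(\lambda_{\mathfrak{m}})$ from the splitting $\mathfrak{m}/\mathfrak{m}^2=V\oplus\ker(\lambda_{\mathfrak{m}})$.
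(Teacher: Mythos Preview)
The paper does not supply its own proof of this proposition; it is quoted verbatim from \cite[Lemma~8.9]{BeBrEn} and used as a black box. So there is no in-paper argument to compare against, and your proposal should be read as a standalone proof.

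Your argument is correct. The treatment of (i)--(ii) is standard: translate each $\theta_i$ by the lift of its residue to land in $\mathfrak{m}$, use that $B/\mathfrak{n}B$ is Artinian local and generated over $k$ by the $\bar\theta'_i$ to see $\mathfrak{m}=\mathfrak{n}B+\langle\theta'_1,\dots,\theta'_e\rangle$, check independence of the $\bar y_k$ via $\nub_{\mathfrak{m}}\!\restriction_S=\nu_{\mathfrak{n}}$, and extract a basis. One cosmetic point: for (ii) you only need that some $t$ of the $\bar\theta'_i$ complete the $\bar y_k$ to a basis of $\mathfrak{m}/\mathfrak{m}^2$; speaking of them as ``projecting to a basis of $\ker(\lambda_{\mathfrak{m}})$'' is equivalent but slightly oversells what (ii) asks.

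For (iii), the crux you isolate---that \emph{any} $\lambda_{\mathfrak{m}}$-sequence $\{\delta_1,\dots,\delta_t\}$ already generates $B$ as an $S$-algebra---is the right idea and your Nakayama argument is clean: $\{\bar y_k,\bar\delta_i\}$ is a basis of $\mathfrak{m}/\mathfrak{m}^2$, so the $\bar\delta_i$ generate the maximal ideal of the Artinian local ring $B/\mathfrak{n}B$, whence $S[\delta_1,\dots,\delta_t]+\mathfrak{n}B=B$ and finiteness of $B$ over $S$ gives $B=S[\delta_1,\dots,\delta_t]$. Writing $\theta_i=P_i(\delta)$ and stripping the constant term then forces $\nub_{\mathfrak{m}}(\theta''_i)\ge\rho$ by the order-function axioms, giving (a) and the inequality in (b). Your check of (c) is fine once you note (as you do implicitly) that $\theta''_i-\theta'_i\in\mathfrak{n}$, so $\bar\theta''_i\equiv\bar\theta'_i\pmod{V}$ and the set $\{\bar y_k,\bar\theta''_1,\dots,\bar\theta''_t\}$ remains a basis; combined with $\nub_{\mathfrak{m}}(\theta''_i)>1$ this pins $\bar\theta''_1,\dots,\bar\theta''_t$ down as a basis of $\ker(\lambda_{\mathfrak{m}})$. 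The second translation on the tail $j>t$ to upgrade the inequality in (b) to an equality is legitimate: the two successive $S$-translations compose to a single $s'_j\in S$, so the statement's format ``$\theta''_j=\theta_j+s'_j$ with $s'_j\in S$'' is preserved.

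In short: your proof is sound, and the generation lemma $B=S[\delta_1,\dots,\delta_t]$ is exactly the leverage point that replaces any iterative approximation by a one-shot translation.
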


\begin{corollary} \label{noentero}
Let $(B,\mathfrak{m})$ be
non-regular reduced equicharacteristic equidimensional excellent local ring.
Let $\{\theta_1,\ldots,\theta_t\}$ be a $\lambda_{\mathfrak{m}}$-sequence such that
$$\rho=\min\{\nub_{\mathfrak{m}}(\theta_1),\ldots,\nub_{\mathfrak{m}}(\theta_t)\}
\in\mathbb{Q}\setminus\mathbb{Z},$$
then $\SSl(B)=\rho$.
\end{corollary}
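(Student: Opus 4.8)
The plan is to prove the two inequalities $\SSl(B)\geq\rho$ and $\SSl(B)\leq\rho$ separately. The first is immediate: the given $\lambda_{\mathfrak m}$-sequence $\{\theta_1,\ldots,\theta_t\}$ is one of the sequences over which the supremum defining $\SSl(B)$ is taken, and it contributes the value $\rho$. For the reverse inequality I would first pass to a convenient model. Since every $\theta_i$ represents a class in $\ker(\lambda_{\mathfrak m})$ we have $\nub_{\mathfrak m}(\theta_i)>1$, so $\SSl(B)\geq\rho>1$; in particular $B$ is in the extremal case with excess of embedding dimension $t$. Using the reductions of \S\ref{setting_1} together with Proposition \ref{Prop_Slope_etale}, and the fact that the faithfully flat extensions involved leave the values $\nub_{\mathfrak m}(\theta_i)$ unchanged, I may assume that $B$ is reduced and complete and that there is a finite-transversal extension $S=k[[y_1,\ldots,y_d]]\subset B$ with $\mathfrak n=\langle y_1,\ldots,y_d\rangle$ a reduction of $\mathfrak m$.

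The decisive observation, and the only place where the hypothesis $\rho\notin\mathbb Z$ enters, concerns translations by elements of $S$. For $s\in S$ the minimal polynomial over $K(S)$ is $Z-s$, so Theorem \ref{Hickel_refinado} gives $\nub_{\mathfrak m}(s)=\nu_{\mathfrak n}(s)\in\mathbb Z\cup\{\infty\}$; in particular $\nub_{\mathfrak m}(s)\neq\rho$ for all $s\in S$, since $\rho\in\mathbb Q\setminus\mathbb Z$. Fix an index $i_0$ with $\nub_{\mathfrak m}(\theta_{i_0})=\rho$. Because $\nub_{\mathfrak m}$ is an order function and the two values $\nub_{\mathfrak m}(\theta_{i_0})=\rho$ and $\nub_{\mathfrak m}(s)$ always differ, I obtain
$$\nub_{\mathfrak m}(\theta_{i_0}+s)=\min\{\rho,\nub_{\mathfrak m}(s)\}\leq\rho\qquad\text{for every }s\in S.$$
Taking $s=0$ shows that $\sup_{s\in S}\nub_{\mathfrak m}(\theta_{i_0}+s)=\rho$: no $S$-translate of $\theta_{i_0}$ can push its order above the non-integer value $\rho$.

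To combine this with Proposition \ref{base_ker} I would note that $\{\theta_1,\ldots,\theta_t\}$ already generates $B$ as an $S$-algebra. Indeed, since $y_1,\ldots,y_d$ have order one, the classes of $\theta_1,\ldots,\theta_t$ form a basis of $\mathfrak m/(\mathfrak n B+\mathfrak m^2)$; applying graded Nakayama to the Artinian local ring $B/\mathfrak n B$ gives $B=S[\theta_1,\ldots,\theta_t]$, so here $e=t$. Now let $\delta=\{\delta_1,\ldots,\delta_t\}$ be an arbitrary $\lambda_{\mathfrak m}$-sequence. Proposition \ref{base_ker}(iii), applied to the generating set $\{\theta_1,\ldots,\theta_t\}$, produces $s_1,\ldots,s_t\in S$ such that $\{\theta_1+s_1,\ldots,\theta_t+s_t\}$ is again a $\lambda_{\mathfrak m}$-sequence and
$$\min\{\nub_{\mathfrak m}(\delta_1),\ldots,\nub_{\mathfrak m}(\delta_t)\}\leq\min_{1\leq i\leq t}\nub_{\mathfrak m}(\theta_i+s_i)\leq\nub_{\mathfrak m}(\theta_{i_0}+s_{i_0})\leq\rho,$$
the last inequality being the displayed bound of the previous paragraph. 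As $\delta$ was arbitrary, $\SSl(B)=\sup_\delta\min_i\nub_{\mathfrak m}(\delta_i)\leq\rho$, and together with the first paragraph this yields $\SSl(B)=\rho$.

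The step I expect to demand the most care is the reduction in the first paragraph: one must check that building the finite-transversal extension $S\subset B$ (which may require enlarging the residue field) alters neither $\SSl(B)$ nor the distinguished value $\rho$, so that the problem is genuinely transported to the complete finite-transversal model. Once there, the argument is essentially formal, the real content being the interaction in the second paragraph between the order-function axioms, the integrality of $\nub_{\mathfrak m}$ on $S$, and the hypothesis $\rho\notin\mathbb Z$; the identification $B=S[\theta_1,\ldots,\theta_t]$ is then a routine Nakayama argument whose only purpose is to make $\theta_{i_0}$ literally one of the generators to which Proposition \ref{base_ker}(iii) applies.
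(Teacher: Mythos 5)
Your proof is correct and follows essentially the same route as the paper's: the only content is that $\nub_{\mathfrak m}(s)\in\mathbb Z\cup\{\infty\}$ for $s\in S$, so translation by $S$ cannot raise the order of the $\theta_i$ realizing the non-integer value $\rho$, and Proposition \ref{base_ker}(iii) then bounds every other $\lambda_{\mathfrak m}$-sequence by such translates. You merely make explicit two steps the paper leaves implicit, namely the reduction to a complete finite-transversal model and the identification $B=S[\theta_1,\ldots,\theta_t]$, both of which you carry out correctly.
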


\begin{proof}
Without loss of generality we can assume that $\rho=\nub_{\mathfrak{m}}(\theta_1)$.
If $\SSl(B)>\rho$, then by Proposition \ref{base_ker}(iii), there exists some $s\in S$ such that $\overline{\nu}_{\mathfrak{m}}(\theta_1+s)>\overline{\nu}_{\mathfrak{m}}(\theta_1)$.
Since $\rho\in\mathbb{Q}\setminus\mathbb{Z}$,  observe that
$\nub_{\mathfrak{m}}(\theta_1)\neq \nub_{\mathfrak{m}}(s)$ for all $s\in S$ (since then $\nub_\mathfrak{m}(s)\in \mathbb{Z})$.
Hence if for some $s\in S$,
$\nub_{\mathfrak{m}}(\theta_1+s)\geq \nub_{\mathfrak{m}}(\theta_1)$ then
$\nub_{\mathfrak{m}}(\theta_1+s)=\nub_{\mathfrak{m}}(\theta_1)$,
 see Remark \ref{properties_order_function}.
\end{proof}

The following lemma, which was proven in \cite[Proposition 8.6]{BeBrEn} in the context of a local ring of an algebraic variety, is valid for Noetherian local rings with the same proof, which we briefly  sketch here.
We will use this result in the proof of Theorem \ref{main_theorem} below.

\begin{lemma}
\label{Lemadelacomplu}
Let $(B,\mathfrak{m},k)$ be a Noetherian local ring of dimension $d\geq 1$ which is in the extremal case. Then $B$ contains a reduction of $\mathfrak{m}$ generated by $d$ elements. 
\end{lemma}
\begin{proof} By \cite[Theorem 10.14]{H_I_O},  it suffices to find   $d$-elements $\kappa_1,\ldots, \kappa_d\in {\mathfrak m}\setminus {\mathfrak m}^2$ such that if $\overline{\kappa_1},\ldots, \overline{\kappa_d}$ denote  their images  in $ {\mathfrak m}_{\xi} /{\mathfrak m}_{\xi}^2 $, then $\text{Gr}_{{\mathfrak m}_{\xi}}({\mathcal O}_{X,\xi})/\langle \overline{\kappa_1},\ldots, \overline{\kappa_d} \rangle$  is a graded ring of dimension 0. Suppose 
	  $\dim_{k}{\mathfrak m}/{\mathfrak m}^2=d+t$. By  hypothesis $\dim_{k}\ker(\lambda)=t$, and if $\delta\in  {\mathfrak m}\setminus {\mathfrak m}^2$ is so that $\overline{\delta}\in \ker(\lambda_{\mathfrak m})$, then $\text{In}_{\mathfrak m}\delta\in \text{Gr}_{\mathfrak m}(B)$ is nilpotent. Thus, any collection of $d$-elements in ${\mathfrak m}\setminus {\mathfrak m}^2$ that completes a  $\lambda_{\mathfrak m}$-sequence to a basis of ${\mathfrak m}/{\mathfrak m}^2$ generates a reduction of ${\mathfrak m}$. 
\end{proof}

\begin{theorem}\label{main_theorem} Let $(B,{\mathfrak m},k)$ be a
non-regular reduced equicharacteristic equidimensional excellent local ring of dimension $d$.
Then $\SSl(B)\in\mathbb{Q}$.
\end{theorem}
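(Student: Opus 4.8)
The plan is to reduce to a finite-transversal situation, prove that the slope is finite by a properness argument, and then split into two cases according to whether $\SSl(B)$ is an integer, invoking Corollary \ref{noentero} in the non-integer case.

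First I would dispose of the easy situation: if $(B,\mathfrak{m})$ is non-regular but not in the extremal case, then $\SSl(B)=1\in\mathbb{Q}$ by definition, so assume $B$ is in the extremal case with $t=\text{exc-emb-dim}(B)>0$. By Proposition \ref{Prop_Slope_etale} the slope is unchanged under $\mathfrak{m}$-adic completion, and, performing the reductions of \S\ref{setting_1} exactly as in the proof of Corollary \ref{noentero}, I may assume $B$ is complete and reduced and that a finite-transversal extension $S=k'[[y_1,\ldots,y_d]]\subset B$ is available with $B=S[\theta_1,\ldots,\theta_e]$ set up as in Proposition \ref{base_ker}. After the reordering there, the elements $\theta_1,\ldots,\theta_t$ complete $y_1,\ldots,y_d$ to a minimal system of generators of $\mathfrak{m}$, so $\theta_j\notin S$ for $1\leq j\leq t$. (The mechanism behind Corollary \ref{noentero} is that, by Proposition \ref{ExtFinNuBar}, $\nub_{\mathfrak{m}}$ restricted to $S$ equals the ordinary order $\nu_{\mathfrak{n}}$ and hence takes integer values, so a translation by an element of $S$ cannot raise a non-integer value of $\nub_{\mathfrak{m}}$.)

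The step I expect to be the main obstacle is the finiteness $\SSl(B)<\infty$. By Proposition \ref{base_ker}(iii) every $\lambda_{\mathfrak{m}}$-sequence is dominated, in the minimum of its $\nub_{\mathfrak{m}}$-values, by a sequence of translates $\theta_j+s_j$ with $s_j\in S$; consequently $\SSl(B)\leq\max_{1\leq j\leq t}\tau_j$, where $\tau_j:=\sup_{s\in S}\nub_{\mathfrak{m}}(\theta_j+s)$, and it suffices to prove each $\tau_j<\infty$. Here I would argue by contradiction: were $\nub_{\mathfrak{m}}(\theta_j+s_n)\to\infty$ for some $s_n\in S$, then by (\ref{nub_integral}) we would have $\theta_j+s_n\in\overline{\mathfrak{m}^{a_n}}$ with $a_n\to\infty$. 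Since $B$ is reduced and complete it is analytically unramified, so by Rees' theorem the integral-closure filtration $\{\overline{\mathfrak{m}^a}\}_a$ defines the $\mathfrak{m}$-adic topology (see \cite{Hu_Sw}); thus $\theta_j+s_n\to 0$ and $s_n\to-\theta_j$ $\mathfrak{m}$-adically. As $\mathfrak{n}B$ is a reduction of $\mathfrak{m}$ and $S$ is $\mathfrak{n}$-adically complete, $S$ is closed in $B$, forcing $\theta_j\in S$, contrary to the choice of $\theta_j$. Hence each $\tau_j$, and therefore $\SSl(B)$, is finite.

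To conclude I would run the dichotomy. If some $\lambda_{\mathfrak{m}}$-sequence $\{\gamma_1,\ldots,\gamma_t\}$ has $\rho:=\min_i\nub_{\mathfrak{m}}(\gamma_i)\in\mathbb{Q}\setminus\mathbb{Z}$ (each $\nub_{\mathfrak{m}}(\gamma_i)\in\mathbb{Q}$ because $B$ is reduced, by (\ref{Samuel_valorativo})), then Corollary \ref{noentero} gives $\SSl(B)=\rho\in\mathbb{Q}$. Otherwise $\min_i\nub_{\mathfrak{m}}(\gamma_i)\in\mathbb{Z}$ for every $\lambda_{\mathfrak{m}}$-sequence, so $\SSl(B)$ is the supremum of a nonempty set of integers which, by the previous paragraph, is bounded above; such a supremum is attained and is itself an integer. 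In either case $\SSl(B)\in\mathbb{Q}$, completing the proof.
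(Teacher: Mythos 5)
Your proof is correct, and its overall architecture coincides with the paper's: reduce to a complete reduced ring with a finite-transversal extension $S\subset B$, dispatch the case where some $\lambda_{\mathfrak{m}}$-sequence has non-integer minimum via Corollary \ref{noentero}, and reduce the remaining (all-integer) case to proving $\SSl(B)<\infty$. Where you genuinely diverge is in that finiteness step. The paper takes a sequence of $\lambda_{\mathfrak{m}}$-sequences whose minima tend to infinity, replaces it by translates $\theta_1^{(0)}+s_1^{(i)}$ via Proposition \ref{base_ker}, shows using Proposition \ref{ExtFinNuBar} and the order-function axioms that $\nu_{\mathfrak{n}}(s_1^{(i+1)}-s_1^{(i)})=\nub_{\mathfrak{m}}(\theta_1^{(i)})\to\infty$, so that $\{s_1^{(i)}\}$ is Cauchy in the complete ring $S$, and derives a contradiction from a nonzero limit element with infinite $\nub_{\mathfrak{m}}$-value in a reduced ring. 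You instead invoke Rees' theorem for analytically unramified rings to get $\overline{\mathfrak{m}^{a}}\subseteq\mathfrak{m}^{a-c}$, conclude that $\theta_j+s_n\to 0$ in the $\mathfrak{m}$-adic topology, and use closedness of $S$ in $B$ to force $\theta_j\in S$, contradicting minimality of the generating set; this is sound and avoids the subsequence extraction, at the price of citing a heavier external result where the paper's computation is self-contained. Two points worth tightening: (a) the reduction to the finite-transversal setting relies on the fact that, in the extremal case, $\mathfrak{m}$ admits a reduction generated by $\dim(B)$ elements (the paper cites the proof of Proposition 8.6 of \cite{BeBrEn} for this), which you should state explicitly rather than only pointing to \S\ref{setting_1}; (b) the closedness of $S$ in $B$ follows from the Artin--Rees lemma for the finite extension together with completeness of $S$ (the reduction property of $\mathfrak{n}B$ only identifies the $\mathfrak{m}$-adic and $\mathfrak{n}B$-adic topologies on $B$), so the justification as written is slightly misattributed, though the conclusion is correct.
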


\begin{proof}
First of all, if $B$ is not in the extremal case then $\SSl(B)=1$, and there is
nothing to prove.
Otherwise, $B$ is in the extremal case, and then it contains
a reduction of $\mathfrak{m}$ generated by $d$ elements
by Lemma \ref{Lemadelacomplu}.
By Proposition \ref{Prop_Slope_etale} we can assume that $B$ is a local complete ring, and
by \S\ref{setting_1} we can consider a finite-transversal extension
$(S,\mathfrak{n})\to(B,\mathfrak{m})$.
\medskip

Assume that the embedding dimension of $(B,\m)$ is $d+t$, where $t$ is the excess embedding dimension of $B$.
By Proposition \ref{base_ker}, we can write $B=S[\theta_1^{(0)},\ldots,\theta_e^{(0)}]$ 
with $\nub_{\mathfrak{m}}(\theta_j^{(0)})>1$, $j=1,\ldots,e$, and
$\mathfrak{m}=\mathfrak{n}B+\langle \theta_1^{(0)},\ldots,\theta_t^{(0)} \rangle$.
If $\min\{\nub_{\mathfrak{m}}(\theta_1^{(0)}),\ldots,\nub_{\mathfrak{m}}(\theta_t^{(0)})\}\in\mathbb{Q}\setminus\mathbb{Z}$ then by Corollary \ref{noentero}
we are done.
In fact, if there are some $s_i\in S$, $i=1,\ldots,t$, such that 
$\min\{\nub_{\mathfrak{m}}(\theta_1^{(0)}+s_1),\ldots,\nub_{\mathfrak{m}}(\theta_t^{(0)}+s_t)\}\in\mathbb{Q}\setminus\mathbb{Z}$ the result follows as well.
Therefore we can assume that $\min\{\nub_{\mathfrak{m}}(\theta_1^{(0)}+s_1),\ldots,\nub_{\mathfrak{m}}(\theta_t^{(0)}+s_t)\}\in\mathbb{Z}$ for every 
$s_i\in S$, $i=1,\ldots,t$.
Hence the only way that the $\SSl(B)\not\in\mathbb{Q}$ is that $\SSl(B)=\infty$.
Suppose  that we can find  a sequence of $\lambda_{\m}$-sequences $\{\{\gamma^{(i)}_1,\dots,\gamma^{(i)}_t\}\}_{i\geq 0}$ such that $\min\{\overline{\nu}_{\m}(\gamma^{(i)}_1),\dots,\overline{\nu}_{\m}(\gamma^{(i)}_t)\}$ tends to infinity as $i$ grows. Using Proposition \ref{base_ker} we can find a sequence $\{\{\theta_1^{(i)},\dots,\theta_t^{(i)}\}\}_{i\geq 0}$ such that:
\begin{enumerate}
    \item [(i)] There exists some $s_j^{(i)}\in S$ such that $\theta_j^{(i)}=\theta_j^{(0)}+s_{j}^{(i)}$, for every $j=1,\dots,t$ and $i\geq 1$, 
    \item [(ii)] $\mathfrak{m}=\mathfrak{n}B+\langle \theta_1^{(i)},\dots,\theta_t^{(i)}\rangle$,
    \item[(iii)] $\min\{\overline{\nu}_{\m}(\theta_j^{(i)})\,:\, j=1,\dots,t\}\geq \min\{\overline{\nu}_{\m}(\gamma_j^{(i)})\,:\,j=1,\dots,t\}$,
    \item[(iv)] the set $\{\theta^{(i)}_1,\dots,\theta^{(i)}_t\}$ forms a $\lambda_{\m}-$sequence.
\end{enumerate}
Note that by the condition in (i), $\nub_{\mathfrak{m}}(s_j^{(i)})>1$ and
combining this with condition (iii) it follows that 
\begin{equation} \label{EqInTheta}
0\neq \In(\theta_j^{(i)})=\In(\theta_j^{(0)})\in\mathfrak{m}/\mathfrak{m}^2.
\end{equation}

Taking a subsequence if necessary, we may assume that the sequence $\{\overline{\nu}_{\m}(\theta_1^{(i)})\}_{i\geq 1}$ is strictly monotonically increasing (to those purposes note that for every $i\geq 0$,
$\overline{\nu}_{\m}(\theta_1^{(i)})\neq\infty$ since $B$ is reduced by hypothesis). It follows then that the sequence $\{s_1^{(i)}\}_{i\geq 1}$ is a Cauchy sequence, since 
$$\overline{\nu}_{\m}(\theta_1^{(i)})=\overline{\nu}_{\m}(s_1^{(i+1)}-s_1^{(i)})=\nu_{\mathfrak{n}}(s_1^{(i+1)}-s_1^{(i)})\in\mathbb{Z}_{\geq 1},$$
where the last equality is a consequence of \cite[Proposition 2.10]{BeBrEn}. Since $S$ is complete, the sequence $\{s_1^{(i)}\}_{i\geq 0}$ converges in $S$ to an element $s_1$.
Hence, the element $\theta=\theta^{(0)}_1+s_1$ is nonzero
(see (\ref{EqInTheta})) and it is the limit of the sequence $\{\theta^{(i)}_1\}_{i\geq 1}=\{\theta^{(0)}_1+s_{1}^{(i)}\}_{i\geq 1}$ satisfying that $\overline{\nu}_{\m}(\theta)=\infty$,
contradicting the fact that $B$ is reduced.
\end{proof}

\begin{corollary} \label{TrasladaS}
Let $(B,\mathfrak{m})$ be
non-regular reduced equicharacteristic equidimensional excellent local ring.
Assume that there is a finite-transversal projection $(S,\mathfrak{n})\to (B,\mathfrak{m})$,
and that $B=S[\theta_1,\ldots,\theta_e]$ for some
$\theta_1,\ldots,\theta_e\in B$.
Then there exist $s_1,\ldots,s_e\in S$ such that
$$\SSl(B)=\min\{\nub_{\mathfrak{m}}(\theta_i-s_i)\mid i=1,\ldots,e\}.$$
\end{corollary}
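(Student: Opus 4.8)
The plan is to sandwich the quantity $\min_i\nub_{\mathfrak{m}}(\theta_i-s_i)$ between $\SSl(B)$ from above (for every admissible translation) and from below (for one well-chosen translation). Write $\rho:=\SSl(B)$. Since $B$ is non-regular, reduced, equicharacteristic, equidimensional and excellent, Theorem \ref{main_theorem} gives $\rho\in\mathbb{Q}$, in particular $\rho<\infty$; and since every element of $\mathfrak{m}$ has $\nub_{\mathfrak{m}}\geq 1$, we have $\rho\geq 1$. Let $m=e_B(\mathfrak{m})=[L:K]$ be the generic rank of the given finite-transversal extension $S\subset B$.

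For the upper bound, subtract from each $\theta_i$ its constant term (an element of the coefficient field $k'\subset S$) so that, after this harmless replacement, $\theta_i\in\mathfrak{m}$; this leaves $S[\theta_1,\ldots,\theta_e]=B$ unchanged and yields $\mathfrak{m}=\mathfrak{n}B+\langle\theta_1,\ldots,\theta_e\rangle$, so the classes $\overline{y_1},\ldots,\overline{y_d},\overline{\theta_1},\ldots,\overline{\theta_e}$ span $\mathfrak{m}/\mathfrak{m}^2$. Extracting a basis (of size $d+t$) from this spanning set, at most $d$ of its members are among the $\overline{y_j}$, so at least $t$ of them are among the $\overline{\theta_i}$; placing $t$ such elements last produces a minimal ordered generating set $\mathbf{x}$ of $\mathfrak{m}$ whose slope $\text{sl}_{\mathbf{x}}(B)$ is the minimum of $\nub_{\mathfrak{m}}$ over those $t$ chosen $\theta_i$. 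As this is a minimum over a subset of $\{\theta_1,\ldots,\theta_e\}$ and as $\text{sl}_{\mathbf{x}}(B)\leq\SSl(B)$ by definition, I obtain $\min_{1\leq i\leq e}\nub_{\mathfrak{m}}(\theta_i)\leq\SSl(B)$; the same bound holds after replacing each $\theta_i$ by any translate $\theta_i-s_i\in\mathfrak{m}$.

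The attainment for $\rho=1$ is then immediate: the translation of the previous paragraph has all $\theta_i\in\mathfrak{m}$, hence $\min_{1\leq i\leq e}\nub_{\mathfrak{m}}(\theta_i)\geq 1$, and the upper bound forces $\min_{1\leq i\leq e}\nub_{\mathfrak{m}}(\theta_i)=1=\SSl(B)$. For $\rho>1$ the ring is in the extremal case, so $\lambda_{\mathfrak{m}}$-sequences exist, and the key point is that the supremum defining $\SSl(B)$ is in fact a maximum. This follows from discreteness: by Hickel's formula (Theorem \ref{Hickel_refinado}) together with Remark \ref{factorial}, every value $\nub_{\mathfrak{m}}(b)$ with $b\neq 0$ lies in $\tfrac{1}{m!}\mathbb{Z}$; hence the set of slopes $\min_i\nub_{\mathfrak{m}}(\delta_i)$ taken over all $\lambda_{\mathfrak{m}}$-sequences is a nonempty subset of $\tfrac{1}{m!}\mathbb{Z}$ bounded above by $\rho<\infty$, and so has a largest element. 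Thus there is a $\lambda_{\mathfrak{m}}$-sequence $\delta^{\ast}=\{\delta_1^{\ast},\ldots,\delta_t^{\ast}\}$ with $\min_i\nub_{\mathfrak{m}}(\delta_i^{\ast})=\rho$.

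Finally I would feed $\delta^{\ast}$ into Proposition \ref{base_ker}(iii): it yields $s_i'\in S$ so that, with $\theta_i'':=\theta_i+s_i'$, one has $B=S[\theta_1'',\ldots,\theta_e'']$, the equality $\min_{1\leq i\leq e}\nub_{\mathfrak{m}}(\theta_i'')=\min_{1\leq i\leq t}\nub_{\mathfrak{m}}(\theta_i'')$, the inequality $\min_{1\leq i\leq e}\nub_{\mathfrak{m}}(\theta_i'')\geq\min_i\nub_{\mathfrak{m}}(\delta_i^{\ast})=\rho$, and the fact that $\{\theta_1'',\ldots,\theta_t''\}$ is again a $\lambda_{\mathfrak{m}}$-sequence. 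This last fact gives $\min_{1\leq i\leq t}\nub_{\mathfrak{m}}(\theta_i'')\leq\SSl(B)=\rho$, so all the displayed inequalities collapse and $\min_{1\leq i\leq e}\nub_{\mathfrak{m}}(\theta_i'')=\rho$; setting $s_i=-s_i'$ completes the proof. I expect the genuine obstacle to be exactly the passage from supremum to maximum: it is here that finiteness of $\SSl(B)$ (Theorem \ref{main_theorem}) is indispensable and combines with the discreteness of the values of $\nub_{\mathfrak{m}}$, everything else reducing to bookkeeping with Proposition \ref{base_ker}.
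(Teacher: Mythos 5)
Your proof is correct and takes essentially the same route as the paper, whose proof of this corollary is a one-line citation of Proposition \ref{base_ker} and Theorem \ref{main_theorem}: your unpacking --- the upper bound from the definition of $\SSl(B)$, the attainment of the supremum via finiteness (Theorem \ref{main_theorem}) combined with the $\tfrac{1}{m!}\mathbb{Z}$-discreteness of the values of $\nub_{\mathfrak{m}}$ on the reduced ring $B$ (Remark \ref{factorial}), and then Proposition \ref{base_ker}(iii) applied to a maximizing $\lambda_{\mathfrak{m}}$-sequence --- is exactly what ``direct consequence'' means here. The separate handling of the case $\SSl(B)=1$, where Proposition \ref{base_ker} does not apply, is a welcome extra bit of care.
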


\begin{proof}
This is a direct consequence of Proposition \ref{base_ker} % \cite[Lemma 8.9]{BeBrEn} 
and Theorem \ref{main_theorem}.
\end{proof}

\begin{theorem} \label{ThSlopeReduced}
Let $(B,{\mathfrak m},k)$ be an equicharacteristic equidimensional excellent local ring. Then 
$\SSl(B)=\SSl(B_{\text{\tiny{red}}})$.
\end{theorem}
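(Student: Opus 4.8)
The plan is to reduce everything to a comparison of the two pieces of data that define the Samuel slope---the excess of embedding dimension and the kernel of $\lambda_{\mathfrak m}$---and to check that these behave compatibly under passage to $B_{\text{red}}$. Write $N=\text{Nil}(B)$, so that $B_{\text{red}}=B/N$ and $\mathfrak{m}_{\text{red}}=\mathfrak{m}/N$, and let $\pi\colon \mathfrak{m}/\mathfrak{m}^2\to \mathfrak{m}_{\text{red}}/\mathfrak{m}_{\text{red}}^2$ be the surjection induced by $B\to B_{\text{red}}$, whose kernel is $(N+\mathfrak{m}^2)/\mathfrak{m}^2$, a space spanned by classes of nilpotent elements. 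The first step is the key observation that, by (\ref{nub_reducido}), a lift $a\in\mathfrak{m}$ satisfies $\nub_{\mathfrak m}(a)=\nub_{\mathfrak{m}_{\text{red}}}(\bar a)$, so that the class of $a$ lies in $\ker\lambda_{\mathfrak m}$ precisely when $\pi$ carries it into $\ker\lambda_{\mathfrak{m}_{\text{red}}}$; equivalently $\pi^{-1}(\ker\lambda_{\mathfrak{m}_{\text{red}}})=\ker\lambda_{\mathfrak m}$. Since every nilpotent $n$ has $\nub_{\mathfrak m}(n)=\infty>1$, the kernel of $\pi$ is contained in $\ker\lambda_{\mathfrak m}$, and $\pi$ therefore restricts to a short exact sequence
\[
0 \longrightarrow (N+\mathfrak{m}^2)/\mathfrak{m}^2 \longrightarrow \ker\lambda_{\mathfrak m} \longrightarrow \ker\lambda_{\mathfrak{m}_{\text{red}}} \longrightarrow 0 .
\]

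Next, set $r=\dim_k\big((N+\mathfrak{m}^2)/\mathfrak{m}^2\big)$. Combining this exact sequence with $\dim_k(\mathfrak{m}/\mathfrak{m}^2)=\dim_k(\mathfrak{m}_{\text{red}}/\mathfrak{m}_{\text{red}}^2)+r$ and $\dim(B)=\dim(B_{\text{red}})$, one obtains
\[
\text{exc-emb-dim}(B)-\dim_k\ker\lambda_{\mathfrak m}=\text{exc-emb-dim}(B_{\text{red}})-\dim_k\ker\lambda_{\mathfrak{m}_{\text{red}}} .
\]
Because $B$ is non-reduced it is non-regular, so $\text{exc-emb-dim}(B)>0$. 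The displayed identity shows that $B$ is in the extremal case if and only if $B_{\text{red}}$ is, which already settles the non-extremal case: then both rings are non-regular and not extremal, whence $\SSl(B)=1=\SSl(B_{\text{red}})$. It also shows that if $B_{\text{red}}$ is regular then $\ker\lambda_{\mathfrak m}=(N+\mathfrak{m}^2)/\mathfrak{m}^2$ is spanned by nilpotents, so there is a $\lambda_{\mathfrak m}$-sequence consisting of nilpotent elements, giving $\SSl(B)=\infty=\SSl(B_{\text{red}})$.

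The remaining, and main, case is when both $B$ and $B_{\text{red}}$ are in the extremal case, where I compare the two supremum formulas directly, working with the $\lambda$-sequence description of $\SSl$. Put $t=\dim_k\ker\lambda_{\mathfrak m}$ and $t'=\dim_k\ker\lambda_{\mathfrak{m}_{\text{red}}}$, so $t=t'+r$. For $\SSl(B_{\text{red}})\geq \SSl(B)$, given a $\lambda_{\mathfrak m}$-sequence $\gamma_1,\ldots,\gamma_t$ I push the classes forward by $\pi$; they span $\ker\lambda_{\mathfrak{m}_{\text{red}}}$, so after reordering $\bar\gamma_1,\ldots,\bar\gamma_{t'}$ is a $\lambda_{\mathfrak{m}_{\text{red}}}$-sequence, and $\nub_{\mathfrak m}(\gamma_i)=\nub_{\mathfrak{m}_{\text{red}}}(\bar\gamma_i)$ forces its minimum to be $\geq \min_i\nub_{\mathfrak m}(\gamma_i)$. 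For the reverse inequality, given a $\lambda_{\mathfrak{m}_{\text{red}}}$-sequence $\bar\delta_1,\ldots,\bar\delta_{t'}$ I lift it to $\delta_1,\ldots,\delta_{t'}\in\mathfrak{m}$ and adjoin nilpotents $\eta_1,\ldots,\eta_r\in N$ whose classes form a basis of $(N+\mathfrak{m}^2)/\mathfrak{m}^2$; the exact sequence guarantees that $\{\delta_i,\eta_j\}$ is a $\lambda_{\mathfrak m}$-sequence, and since $\nub_{\mathfrak m}(\eta_j)=\infty$ its minimum equals $\min_i\nub_{\mathfrak{m}_{\text{red}}}(\bar\delta_i)$. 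Taking suprema in each direction yields the equality. The one point requiring care---the main obstacle---is the bookkeeping that identifies the extremal cases and certifies that the lifted-plus-nilpotent families really are $\lambda_{\mathfrak m}$-sequences, i.e. bases of $\ker\lambda_{\mathfrak m}$; this is precisely what the short exact sequence of the first step supplies.
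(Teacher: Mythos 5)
Your proof is correct and follows essentially the same route as the paper's: both identify $\ker(\lambda_{\mathfrak m})$ as an extension of $\ker(\lambda_{\mathfrak{m}_{\text{red}}})$ by the nilpotent part $(\text{Nil}(B)+\mathfrak{m}^2)/\mathfrak{m}^2$ (you via a short exact sequence, the paper via explicit direct-sum complements), treat the $B_{\text{red}}$-regular and non-extremal cases via nilpotents having infinite order, and then match $\lambda$-sequences in both directions using $\nub_{\mathfrak m}(b)=\nub_{\mathfrak{m}_{\text{red}}}(\tilde b)$. The only cosmetic caveat is that your ``extremal iff extremal'' assertion should exclude the degenerate case $\text{exc-emb-dim}(B_{\text{red}})=0$, but you handle that ($B_{\text{red}}$ regular) separately and correctly.
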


\begin{proof} 

The natural surjective morphism of local rings, 
$(B,{\mathfrak m},k) \to  (B_{\text{\tiny{red}}},{\mathfrak m}_{\text{\tiny{red}}},k)$ 
induces a surjective linear map of $k$-vector spaces,
$h: {\mathfrak m}/{\mathfrak m}^2 \to {\mathfrak m}_{\text{\tiny{red}}}/{\mathfrak m}_{\text{\tiny{red}}}^2$,
with $\ker(h)=(\text{Nil}(B)+{\mathfrak m}^2)/{\mathfrak m}^2$, and a commutative diagram of $k$-vector spaces, 
$$\xymatrix{
{\mathfrak m}/{\mathfrak m}^2  \ar[r]^{h} \ar[d]_{\lambda_{\mathfrak m}} &
{\mathfrak m}_{\text{\tiny{red}}}/{\mathfrak m}_{\text{\tiny{red}}}^2 \ar[d]^{\lambda_{{\mathfrak m}_{\text{\tiny{red}}}}}\\
{\mathfrak m}^{(\geq 1)}/{\mathfrak m}^{(> 1)}  \ar[r]^{h'}  &
{\mathfrak m}_{\text{\tiny{red}}}^{(\geq 1)}/{\mathfrak m}_{\text{\tiny{red}}}^{(> 1)}.
}$$
Note that $h'$ is an isomorphism.

There are linear subspaces $L\subset{\mathfrak m}/{\mathfrak m}^2$ and 
$L_{\text{\tiny{red}}}\subset{\mathfrak m}_{\text{\tiny{red}}}/{\mathfrak m}_{\text{\tiny{red}}}^2$ such that
$${\mathfrak m}/{\mathfrak m}^2=
L\oplus \ker(\lambda_{{\mathfrak m}}),
\qquad
{\mathfrak m}_{\text{\tiny{red}}}/{\mathfrak m}^2_{\text{\tiny{red}}}=
L_{\text{\tiny{red}}}\oplus \ker(\lambda_{{\mathfrak m}_{\text{\tiny{red}}}}).$$
Since $\ker(h)\subset\ker(\lambda_{{\mathfrak m}})$, then $L\cong L_{\text{\tiny{red}}}$ and
there exist a linear subspace
$H\subset\ker(\lambda_{{\mathfrak m}})$ such that 
$\ker(\lambda_{{\mathfrak m}})=H\oplus\ker(h)$.
Hence $H\cong\ker(\lambda_{{\mathfrak m}_{\text{\tiny{red}}}})$ via $h$.

If $B_{\text{\tiny{red}}}$ is a regular local ring then $t_{\text{\tiny{red}}}=0$
and $\SSl(B_{\text{\tiny{red}}})=\infty$ by definition.
In this case $\ker(\lambda_{\mathfrak m})=\ker(h)$, and hence
for any $\lambda_{\mathfrak{m}}$-sequence $\theta_1,\ldots,\theta_t$,
we have that $\theta_i\in \text{Nil}(B)$ for all $i=1,\ldots,t$.
The result follows since $\nub_{\mathfrak{m}}(\theta_i)=\infty$.

Assume now that $B_{\text{\tiny{red}}}$ is non-regular,
then $t_{\text{\tiny{red}}}>0$ and
by Theorem \ref{main_theorem} $\SSl(B_{\text{\tiny{red}}})<\infty$.
If $\SSl(B_{\text{\tiny{red}}})=1$ then 
$\dim_k(L)=\dim_k(L_{\text{\tiny{red}}})>d=\dim(B)$, and we have that $\SSl(B)=1$.

If $\SSl(B_{\text{\tiny{red}}})>1$ then the result follows from (\ref{nub_reducido})
and because 
every $\lambda_{{\mathfrak{m}}_{\text{\tiny{red}}}}$-sequence
can be extended to a $\lambda_{\mathfrak{m}}$-sequence with elements in $\ker(h)$,
and, reciprocally, every $\lambda_{\mathfrak{m}}$-sequence contains a
$\lambda_{{\mathfrak{m}}_{\text{\tiny{red}}}}$-sequence.
\end{proof}

\begin{corollary}
\label{Pendienteinfinita}
    Let $(B,\m,k)$ be an equicharacteristic equidimensional excellent local ring. Then $\SSl(B)=\infty$ if and only if $B_{\text{\tiny{red}}}$ is regular.
\end{corollary}

\section{The Samuel slope after some faithfully flat extensions}
\label{SeccFFlatExt}

In this section we will show that the Samuel slope of a local
ring remains the same after the faithfully flat extensions considered in
\S\ref{setting_1}.

\begin{proposition} \label{LemaResField}
Let $(B,\mathfrak{m})$ be a Noetherian local ring. Set $B'=B[x]_{\mathfrak{m}[x]}$
and let $\mathfrak{m}'=\mathfrak{m}B'$ be the maximal ideal of $B'$.
Then
$\SSl(B)=\SSl(B')$.
\end{proposition}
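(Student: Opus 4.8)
The plan is to use that the passage from $B$ to $B'=B[x]_{\mathfrak m[x]}$ only enlarges the residue field from $k$ to $k(x)$, while preserving every quantity on which $\SSl$ depends. First I would record the invariances. Since $\operatorname{ht}(\mathfrak m[x])=\operatorname{ht}(\mathfrak m)$ we get $\dim B'=\dim B=:d$; since $\mathfrak m'=\mathfrak m B'$ and $B\to B'$ is flat, $\mathfrak m'^{\,n}=\mathfrak m^{n}B'$ and $\mathfrak m'/\mathfrak m'^{\,2}=(\mathfrak m/\mathfrak m^{2})\otimes_{k}k(x)$, so the embedding dimension and hence the excess $t$ agree for $B$ and $B'$. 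Because $B\to B'$ is faithfully flat, the discussion in \S\ref{setting_1} (and \cite[Proposition 1.6.2]{Hu_Sw}) gives $\nub_{\mathfrak m'}(b)=\nub_{\mathfrak m}(b)$ for all $b\in B$. Note also that $x$ is a unit in $B'$ (its coefficient $1$ lies outside $\mathfrak m$), so its residue $\overline x\in k(x)$ is transcendental over $k$. If $t=0$ both rings are regular and $\SSl=\infty$, so I may assume $t>0$.

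The technical heart is a Gauss-type formula for $\nub_{\mathfrak m'}$. Every element of $\mathfrak m'=\mathfrak m[x]\cdot B'$ is a unit multiple of a polynomial $f=\sum_{j}b_{j}x^{j}$ with $b_{j}\in\mathfrak m$, and I claim
\[
\nub_{\mathfrak m'}\Big(\sum_{j}b_{j}x^{j}\Big)=\min_{j}\nub_{\mathfrak m}(b_{j}).
\]
The inequality ``$\geq$'' is immediate since $\nub_{\mathfrak m'}(x)=0$ forces $\nub_{\mathfrak m'}(b_{j}x^{j})=\nub_{\mathfrak m}(b_{j})$, and $\nub_{\mathfrak m'}$ is an order function. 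For ``$\leq$'' I would use the valuative description (\ref{Samuel_valorativo}): choose a Rees valuation $v$ of $\mathfrak m$ realizing $v(b_{j_{0}})/v(\mathfrak m)=\min_{j}\nub_{\mathfrak m}(b_{j})$, and extend it to the Gauss valuation $v'$ on $B'$ given by $v'(\sum c_{j}x^{j})=\min_{j}v(c_{j})$; this $v'$ is non-negative on $B'$ with $v'(\mathfrak m')=v(\mathfrak m)$. Since every element of $\overline{(\mathfrak m')^{a}}$ satisfies $v'(\,\cdot\,)\geq a\,v'(\mathfrak m')$, (\ref{nub_integral}) yields $\nub_{\mathfrak m'}(f)\leq v'(f)/v'(\mathfrak m')\leq v(b_{j_{0}})/v(\mathfrak m)$, proving the claim (passing to $B/\mathfrak p$ for the relevant minimal prime reduces this to the classical domain case). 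Equivalently, this amounts to saying that integral closure commutes with the extension, $\overline{\mathfrak m^{a}}B'=\overline{(\mathfrak m')^{a}}$. I expect this content formula to be the main obstacle, since it is where the careful use of Rees valuations is needed.

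From the formula I would deduce $\ker(\lambda_{\mathfrak m'})=\ker(\lambda_{\mathfrak m})\otimes_{k}k(x)$ inside $\mathfrak m'/\mathfrak m'^{\,2}=(\mathfrak m/\mathfrak m^{2})\otimes_{k}k(x)$. Writing the class of $f=\sum_{j}b_{j}x^{j}$ as $\sum_{j}\overline{b_{j}}\otimes\overline{x}^{\,j}$, the formula shows $\nub_{\mathfrak m'}(f)>1$ iff every $\nub_{\mathfrak m}(b_{j})>1$, i.e. iff every $\overline{b_{j}}\in\ker(\lambda_{\mathfrak m})$; and since the powers $\overline{x}^{\,j}$ are $k$-linearly independent, $\sum_{j}\overline{b_{j}}\otimes\overline{x}^{\,j}$ lies in $\ker(\lambda_{\mathfrak m})\otimes_{k}k(x)$ exactly when all $\overline{b_{j}}\in\ker(\lambda_{\mathfrak m})$. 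In particular $\dim_{k(x)}\ker(\lambda_{\mathfrak m'})=\dim_{k}\ker(\lambda_{\mathfrak m})$, so $B$ is in the extremal case iff $B'$ is; when it is not, both slopes equal $1$ and we are done.

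Finally, in the extremal case I would compare slopes through $\lambda$-sequences. For ``$\leq$'': a $\lambda_{\mathfrak m}$-sequence $\gamma_{1},\dots,\gamma_{t}\in\mathfrak m$ maps to a $\lambda_{\mathfrak m'}$-sequence with $\nub_{\mathfrak m'}(\gamma_{i})=\nub_{\mathfrak m}(\gamma_{i})$, whence $\SSl(B)\leq\SSl(B')$. For ``$\geq$'': given a $\lambda_{\mathfrak m'}$-sequence, replace each member by a polynomial $\gamma'_{i}=\sum_{j}b_{ij}x^{j}$ with $b_{ij}\in\mathfrak m$; by the content formula $\nub_{\mathfrak m}(b_{ij})\geq\nub_{\mathfrak m'}(\gamma'_{i})\geq\mu:=\min_{i}\nub_{\mathfrak m'}(\gamma'_{i})$ for all $i,j$, and by the transcendence of $\overline x$ the classes $\overline{b_{ij}}$ span $\ker(\lambda_{\mathfrak m})$ over $k$. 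Selecting $t$ of them forming a basis produces a $\lambda_{\mathfrak m}$-sequence all of whose members have $\nub_{\mathfrak m}\geq\mu$, so $\SSl(B)\geq\mu$; taking the supremum over $\lambda_{\mathfrak m'}$-sequences gives $\SSl(B)\geq\SSl(B')$, and equality follows.
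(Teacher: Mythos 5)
Your proof is correct, and its core --- the content formula $\nub_{\mathfrak{m}'}\bigl(\sum_j b_jx^j\bigr)=\min_j\nub_{\mathfrak{m}}(b_j)$, obtained from $\overline{(\mathfrak{m}')^{a}}=\overline{\mathfrak{m}^{a}}B'$, together with the observation that the excess of embedding dimension and extremality are preserved --- is exactly the core of the paper's proof (the paper simply cites \cite[Lemma 8.4.2(9)]{Hu_Sw} for the integral-closure identity rather than re-deriving it through Gauss extensions of Rees valuations; your valuation-theoretic derivation is sound, modulo the reduction to the domain case that you flag). Where you genuinely diverge is in the passage from a $\lambda_{\mathfrak{m}'}$-sequence back to a $\lambda_{\mathfrak{m}}$-sequence. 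The paper argues by induction on the degrees of the representing polynomials: if the constant terms $\theta_{1,0},\ldots,\theta_{t,0}$ are linearly dependent modulo $\mathfrak{m}^2$, it replaces one member of the sequence by $x^{-1}\bigl(\sum_i\mu_i\theta'_i-\sum_i\mu_i\theta_{i,0}\bigr)$, preserving the minimum of the orders while lowering a degree, and iterates until the constant terms themselves form a $\lambda_{\mathfrak{m}}$-sequence. You instead make a single dimension count: all coefficients $\overline{b_{ij}}$ lie in $\ker(\lambda_{\mathfrak{m}})$, and their $k$-span $W$ satisfies $\dim_{k}W=\dim_{k(x)}\bigl(W\otimes_k k(x)\bigr)\geq t$ because the $t$ classes $\sum_j\overline{b_{ij}}\otimes\overline{x}^{\,j}$ are $k(x)$-independent; hence $W=\ker(\lambda_{\mathfrak{m}})$ and a basis with all orders at least $\mu$ can be extracted from the $b_{ij}$. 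This is shorter and avoids the bookkeeping of the iterative replacement (you should only note explicitly that passing from a general element $g/s$ of $B'$ to the polynomial $g$ multiplies by a unit, so it preserves both $\nub_{\mathfrak{m}'}$ and the property of being a $\lambda_{\mathfrak{m}'}$-sequence); both arguments deliver the same conclusion.
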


\begin{proof}
If $B$ is regular, there is nothing to prove.
Otherwise, observe that the excess of embedding dimension of $B$, $t$, is the same
as that of $B'$.
If $B$ is not in the extremal case, then $B'$ is not in the extremal case either,
and then $\SSl(B)=1=\SSl(B')$.
Therefore it remains to prove the statement if $B$ is in the extremal case,
and hence so is $B'$.  
The inequality $\SSl(B)\leq\SSl(B')$ is straightforward.
Let us prove that $\SSl(B)\geq\SSl(B')$.

Every element $\theta'\in B'$ can be expressed, up to a unit, as a polynomial
$$\theta'=\theta_{0}+\theta_{1}x+\cdots+\theta_{r}x^r,$$
for some $r\in\mathbb{N}$ and where $\theta_i\in B$.
Note that 
$$\nub_{\mathfrak{m}'}(\theta')=
\min\{\nub_{\mathfrak{m}}(\theta_{0}),\ldots,\nub_{\mathfrak{m}}(\theta_{r})\}.$$
This follows from the fact that
$$\nub_{\mathfrak{m}}(\theta_i)\geq\frac{a}{b}
\Longleftrightarrow \theta_i^b\in\overline{\mathfrak{m}^a},
\qquad
\nub_{\mathfrak{m}'}(\theta')\geq\frac{a}{b}
\Longleftrightarrow \theta'^b\in\overline{{\mathfrak{m}'}^a},$$
and $\overline{{\mathfrak{m}'}^a}=\overline{\mathfrak{m}^a}B'$,
see \cite[Lemma 8.4.2(9)]{Hu_Sw}. 

Assume that $\theta'_1,\ldots,\theta'_t$ is a $\lambda_{\mathfrak{m}'}$-sequence.
Up to some units in $B'$, every $\theta'_i$ can be expressed as a polynomial
$$\theta'_i=\theta_{i,0}+\theta_{i,1}x+\cdots+\theta_{i,r_i}x^{r_i},
\qquad i=1,\ldots,t,$$
where $\theta_{i,j}\in B$.
We may assume that $\theta_{i,0}\not\in\mathfrak{m}^2$ for all $i=1,\ldots,t$. 

We have that
\begin{equation} \label{EqTerminosCte}
\min\{\nub_{\mathfrak{m}'}(\theta'_1),\ldots,\nub_{\mathfrak{m}'}(\theta'_t)\}
\leq
\min\{\nub_{\mathfrak{m}}(\theta_{1,0}),\ldots,\nub_{\mathfrak{m}}(\theta_{t,0})\}.
\end{equation}
If the classes of $\theta_{1,0},\ldots,\theta_{t,0}$ in $\mathfrak{m}/\mathfrak{m}^2$
are linearly independent then $\{\theta_{1,0},\ldots,\theta_{t,0}\}$ is
a $\lambda_{\mathfrak{m}}$-sequence and we conclude that $\SSl(B)\geq\SSl(B')$.

If the classes of $\theta_{1,0},\ldots,\theta_{t,0}$ are linearly dependent, 
there are $\mu_1,\ldots,\mu_t\in B$, not all zero in $B/\mathfrak{m}$, such that
$$\mu_1\theta_{1,0}+\cdots+\mu_t\theta_{t,0}\in\mathfrak{m}^2.$$
Since $\theta_{i,0}\not\in\mathfrak{m}^2$  for $i=1,\dots,t$, there are at least two indices
$i\neq j$ such that $\mu_i,\mu_j\not\in\mathfrak{m}$.
Let $i_0$ be such that
$$\nub_{\mathfrak{m}'}(\theta'_{i_0})=
\min\{\nub_{\mathfrak{m}'}(\theta'_1),\ldots,\nub_{\mathfrak{m}'}(\theta'_t)\}.$$
Then either $i_0\neq i$ or $i_0\neq j$.
Assume that $i_0\neq i$ and define 
$$\theta''_{\ell}:=\theta'_{\ell}, \quad \ell\neq i
\qquad\text{and} \qquad
\theta''_i:=x^{-1}\left(\mu_1\theta'_1+\cdots+\mu_t\theta'_t-
\left(\mu_1\theta_{1,0}+\cdots+\mu_t\theta_{t,0}\right)\right).$$
We have that $\theta''_1,\ldots,\theta''_t$ is a $\lambda_{\mathfrak{m}'}$-sequence
and
$$\min\{\nub_{\mathfrak{m}'}(\theta'_1),\ldots,\nub_{\mathfrak{m}'}(\theta'_t)\}=
\min\{\nub_{\mathfrak{m}'}(\theta''_1),\ldots,\nub_{\mathfrak{m}'}(\theta''_t)\},$$
so that inequality (\ref{EqTerminosCte}) also holds and
the degree of the polynomial for $\theta''_i$ is smaller.
After finitely many steps we arrive to the case where 
$\theta_{1,0},\ldots,\theta_{t,0}$ form a $\lambda_{\mathfrak{m}}$-sequence.
\end{proof}

\begin{definition} \cite[Definition 4.4]{BVIndiana}
Let $S$ be a regular ring and let $\mathfrak{q}\subset S$ be a  prime such that the quotient $S/\q$ is a regular ring.
Let $f(z)\in S[z]$ be a monic polynomial of degree $m$ in $z$:
$$f(z)=z^m+a_1z^{m-1}+\cdots+a_{m-1}z+a_m, \quad a_i\in S,\ j=1,\ldots,m.$$
Set $r_j=\nu_{\mathfrak q}(a_j)$ for $j=1\ldots m$,
and set
$$q:=\min\left\{\dfrac{r_j}{j}: j=1,\ldots, m\right\}=
\min\left\{\dfrac{\nu_{\mathfrak q}(a_j)}{j}: j=1,\ldots, m\right\}.$$
For every $j=1,\ldots,m$, if $jq=r_j$ then set
$A_j:=\In_{\mathfrak{q}}(a_j)\in \mathfrak{q}^{jq}/\mathfrak{q}^{jq+1}$,
and if $jq<r_j$, set $A_j:=0$.

We define the \emph{weighted initial form} of $f$ at $\mathfrak{q}$
as the polynomial: 
\begin{equation} \label{EqWeightIn}
\wwin_{\mathfrak{q}}(f(z)):=z^m+\sum_{j=1}^{m}A_jz^{m-j}\in\Gr_{\mathfrak{q}}(S)[z],
\end{equation}
where $\Gr_{\mathfrak{q}}(S)=\oplus_{i\geq 0}\mathfrak{q}^i/\mathfrak{q}^{i+1}$.
Note that $\wwin_{\mathfrak{q}}(f(z))$ is a weighted polynomial of degree $mq$, where
the degree of $z$ is $q$ and the degree of elements in $\mathfrak{q}/\mathfrak{q}^2$
is one.
\end{definition}

\begin{remark} \label{MejoraWwin}
Let $(S,\mathfrak{n})\to (B,\mathfrak{m})$ be a finite-transversal projection.
Let $\theta\in B$, and set $q=\nub_{\mathfrak{m}}(\theta)$.
If $f(z)=z^m+a_1z^{m-1}+\cdots+a_{m-1}z+a_m\in S[z]$ is the minimal polynomial of $\theta$ over $S$, we can associate
to $\theta$ the weighted initial form of $f$ at $\mathfrak{n}=\mathfrak{m}\cap S$,
$\wwin_{\mathfrak{n}}(f)$.
Note that $\wwin_{\mathfrak{n}}(f(z))$ is a monic polynomial on $z$ of degree $m$ different from $z^m$ since $B$ is reduced.
In particular, there is some $j$ with $A_j\neq 0$.

In fact, there exists some $s\in S$ such that
$\nub_{\mathfrak{m}}(\theta-s)>\nub_{\mathfrak{m}}(\theta)$ if and only if
$\wwin_{\mathfrak{n}}(f)$ is an $m$-th power.
See \cite[Remark 4.6]{BVIndiana} for a discussion on the context of algebraic varieties.
\end{remark}

\begin{remark} \label{RemWwin}
Let $(S,\mathfrak{n})\to (B,\mathfrak{m})$ be a finite-transversal projection
of equicharacteristic local rings.
Assume that $B=S[\theta]$ for some $\theta\in B$.
By Corollary \ref{TrasladaS} there exists some $s\in S$ such that $\nub_{\mathfrak{m}}(\theta-s)=\SSl(B)$.
Remark \ref{MejoraWwin} gives us an iterative procedure to find $s\in S$:

If $\wwin_{\mathfrak{n}}(f(z))$ is not an $m$-th power then $\nub_{\mathfrak{m}}(\theta)=\SSl(B)$.

If $\wwin_{\mathfrak{n}}(f(z))$ is an $m$-th power then choose $s_1\in S$ such that
$\wwin_{\mathfrak{n}}(f(z))=(z-\In_{\mathfrak{n}}(s_1))^m$.
Note that in this case $\nub_{\mathfrak{m}}(\theta)$ must be an integer.
Set $\theta_1=\theta-s_1$. We know that
$\nub_{\mathfrak{m}}(\theta_1)>\nub_{\mathfrak{m}}(\theta)$,
and then we can repeat the procedure with $\theta_1$.
Observe that $f_1(z)=f(z+s)$ is the minimal polynomial of $\theta_1$.

Now, since $\SSl(B)<\infty$ (Theorem \ref{main_theorem}), it is clear that,
after finitely many steps, the weighted initial 
form is not an $m$-th power.
\end{remark}

\begin{theorem} \label{ThSlopeEtale}
Let $(B,{\mathfrak m},k)$ be an  equicharacteristic equidimensional excellent local ring,
and let  $(B,{\mathfrak m})\to (B',{\mathfrak m}')$ be a local-\'etale extension.
Then $\SSl(B)=\SSl(B')$. 
\end{theorem}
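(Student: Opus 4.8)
The plan is to reduce, through a chain of harmless simplifications, to a very concrete situation where the only genuinely new phenomenon is the enlargement of the residue field $k\to k'$, and then to track how the slope-computing procedure of Remark \ref{RemWwin} behaves under that enlargement. First I would pass to completions: by Proposition \ref{Prop_Slope_etale} we have $\SSl(B)=\SSl(\widehat B)$ and $\SSl(B')=\SSl(\widehat{B'})$, and completion preserves local-\'etaleness, so it suffices to treat the case of complete rings. Fixing a coefficient field $k\subset B$, a local-\'etale extension of complete equicharacteristic local rings inducing a (finite separable) residue field extension $k'/k$ is necessarily of the form $B'\cong B\otimes_k k'$, since over the Henselian ring $B$ an \'etale extension is pinned down by its residue field extension. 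Next, using Theorem \ref{ThSlopeReduced} together with the fact that $(B\otimes_k k')_{\mathrm{red}}=B_{\mathrm{red}}\otimes_k k'$ (separability makes $B\otimes_k k'$ geometrically reduced), I would assume $B$, and hence $B'$, reduced. Since $\mathfrak m B'=\mathfrak m'$ and $B\to B'$ is faithfully flat, Proposition 1.6.2 of \cite{Hu_Sw} gives $\nub_{\mathfrak m'}(b)=\nub_{\mathfrak m}(b)$ for $b\in B$; combined with $\Gr_{\mathfrak m'}(B')=\Gr_{\mathfrak m}(B)\otimes_k k'$ this shows that $\dim B$, the embedding dimension, the excess of embedding dimension $t$, and $\dim_k\ker(\lambda_{\mathfrak m})$ are all preserved (indeed $\ker(\lambda_{\mathfrak m'})=\ker(\lambda_{\mathfrak m})\otimes_k k'$). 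Hence $B$ is regular, non-extremal, or extremal exactly when $B'$ is; the first two cases give $\SSl=\infty$ and $\SSl=1$ respectively for both rings, so it remains to treat the extremal case, where automatically $\SSl>1$.

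In the extremal case $B$ contains a reduction of $\mathfrak m$ generated by $d=\dim B$ elements, so by \S\ref{setting_1} there is a finite-transversal extension $S=k[[y_1,\dots,y_d]]\to B$, and correspondingly $S'=S\otimes_k k'=k'[[y_1,\dots,y_d]]\to B'$ is finite-transversal (multiplicity, hence the generic rank $[L:K]$, is unchanged by the flat base change $k\to k'$). Writing $B=S[\theta_1,\dots,\theta_e]$ we also have $B'=S'[\theta_1,\dots,\theta_e]$. A key preliminary observation is that the minimal polynomial $f_i\in S[z]$ of each $\theta_i$ over $K(S)$ remains the minimal polynomial over $K(S')$: by Proposition \ref{transversal_intermedio}(ii) its degree equals the generic rank of $S\subset S[\theta_i]$, this rank is preserved under $\otimes_k k'$, and any factor over $K(S')$ of the same degree that is monic must coincide with $f_i$. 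By Corollary \ref{TrasladaS} (applied in each intermediate ring $S[\theta_i]$, using Proposition \ref{transversal_intermedio}(iv) to identify $\nub_{\mathfrak m}$ there) the slope is computed generator by generator, $\SSl(B)=\min_i \sup_{s\in S}\nub_{\mathfrak m}(\theta_i-s)$, and likewise for $B'$. The inequality $\SSl(B)\le\SSl(B')$ is then immediate: an optimal $\lambda_{\mathfrak m}$-sequence in $B$ maps to a $\lambda_{\mathfrak m'}$-sequence in $B'$ with the same $\nub$-values.

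The whole problem therefore collapses to the per-generator statement
$$\sup_{s\in S}\nub_{\mathfrak m}(\theta_i-s)=\sup_{s'\in S'}\nub_{\mathfrak m'}(\theta_i-s').$$
Here I would run the iterative procedure of Remark \ref{RemWwin} in parallel over $S$ and over $S'$. By Proposition \ref{MejoraWwin}, a translation strictly increasing $\nub(\theta_i)$ exists if and only if the weighted initial form $\wwin_{\mathfrak n}(f_i)$ is an $m_i$-th power of a linear polynomial $z-\alpha$. Since $\mathfrak n'=\mathfrak n S'$ and $\Gr_{\mathfrak n'}(S')=\Gr_{\mathfrak n}(S)\otimes_k k'=k'[Y_1,\dots,Y_d]$, and since the minimal polynomial is the same, we have $\wwin_{\mathfrak n'}(f_i)=\wwin_{\mathfrak n}(f_i)\otimes_k k'$. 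Thus the two procedures agree step by step provided being such an $m_i$-th power descends along $k'/k$, and whenever it does the root $\alpha$ can be chosen in $k[Y]$, so that the very same $s\in S$ works over $S'$; the process terminates after finitely many steps because $\SSl<\infty$ by Theorem \ref{main_theorem}, and at each stage $\nub_{\mathfrak m}(\cdot)=\nub_{\mathfrak m'}(\cdot)$ on $B$.

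The main obstacle, and the only place where separability is genuinely used, is precisely this descent of $m_i$-th powers. In characteristic $0$ or when $\mathrm{char}(k)=p\nmid m_i$, the element $\alpha$ is read off from the coefficient of $z^{m_i-1}$ and already lies in $k[Y]$, so the condition is field-independent. When $p\mid m_i$, writing $m_i=p^a m_0$ with $p\nmid m_0$ one finds $\wwin=(z^{p^a}-\beta)^{m_0}$ with $\beta=\alpha^{p^a}$ determined over $k[Y]$ by a coefficient of $\wwin_{\mathfrak n}(f_i)$; the question becomes whether the (fixed) $\beta\in k[Y]$ is a $p^a$-th power over $k'[Y]$, which reduces to whether certain coefficients $b\in k$ become $p^a$-th powers in $k'$. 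This is where separability is decisive: if $b=c^{p^a}$ with $c\in k'$, then $c$ is purely inseparable over $k$ yet separable over $k$ (being in $k'$), hence $c\in k$ and $b\in k^{p^a}$; that is, $(k')^{p^a}\cap k=k^{p^a}$. Consequently the $m_i$-th power condition, and the admissible translations, are identical over $S$ and $S'$, the two procedures run in lockstep and terminate at the same value, giving the per-generator equality and therefore $\SSl(B)=\SSl(B')$.
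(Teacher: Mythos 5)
Your proof follows essentially the same route as the paper's: reduce to the complete, reduced, extremal case, set up the finite-transversal projections $S\to B$ and $S'=S\otimes_k k'\to B'$ with the same generators $\theta_i$ and the same minimal polynomials, and then compare weighted initial forms via Proposition \ref{MejoraWwin} and Corollary \ref{TrasladaS}, using $\SSl(B)=\min_i\SSl(S[\theta_i])$. Your explicit verification that the $m$-th power condition descends along the finite separable extension $k'/k$ (reading $\alpha$ off a coefficient when $p\nmid m$, and using that $(k')^{p^a}\cap k=k^{p^a}$ for separable $k'/k$ when $p\mid m$) is correct and fills in the step the paper dispatches with the one-line remark that $\Gr_{\mathfrak{n}'}(S')=\Gr_{\mathfrak{n}}(S)\otimes_k k'$ and $k\to k'$ is \'etale.
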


\begin{proof}
{If $B$ is regular or if  $\SSl(B)=1$ there is nothing to prove.
	Otherwise let $k=B/\mathfrak{m}$ and let $k'=B'/\mathfrak{m}'$.
	If $k=k'$ then the result is \cite[Proposition 3.10]{BeBrEn}.
	After ruling out the previous cases, by Theorem \ref{ThSlopeReduced}, we can assume that both $B$ and $B'$ are reduced, in the extremal case, and that $k\subsetneq k'$ is a separable finite extension. 	Set $d=\dim(B)$. Since $B$ is in the extremal case, by Lemma \ref{Lemadelacomplu}, ${\mathfrak m}$ has a reduction generated by $d$ elements, $\langle y_1,\ldots, y_d\rangle$. Observe that $\langle y_1,\ldots, y_d\rangle$ also expands to a reduction of ${\mathfrak m}'$ in $B'$, and similarly, expands to reductions of $\widehat{\mathfrak m}$ in $\widehat{B}$ and 
 $\widehat{{\mathfrak m}'}$ in $\widehat{B'}$ respectively. 	
	
	Since $B\to B'$ is \'etale, by \cite[Proposition 17.6.3]{EGAIV}, $\widehat{B'}$ is formally \'etale over  $\widehat{B}$ for the ${\mathfrak m}$-adic topologies, furthermore,  $\widehat{B'}$ is faithfully flat over $\widehat{B}$ and finite. To ease notation,  let us denote again by  $k$ some coefficient field of $\widehat{B}$. By the natural map $\widehat{B}\to \widehat{B'}$, the image of $k$ maps into some coeficient field of $\widehat{B'}$ which for simplicity we denote by $k'$. Then we have the following commutative diagram: 
	$$\xymatrix{\widehat{B} \ar[r] & \widehat{B'} \\ 
		S=k[[y_1,\ldots,y_d]] \ar[r]\ar[u] & S'=k'[[y_1,\ldots,y_d]],\ar[u].}$$
	where the lower horizontal map is local \'etale,  the vertical maps are finite-transversal, and we write $y_1,\ldots, y_d$ for the images of these elements in both $\widehat{B}$ and $\widehat{B'}$.

	Write $\widehat{B}=S[\theta_1,\ldots,\theta_e]$ for some $\theta_i\in B$
	(see \ref{setting_1}).
	Then it can be checked that $\widehat{B'}=S'[\theta_1,\ldots,\theta_e]$ 
	(here we use the fact that $\widehat{B'}$ is finite over $\widehat{B}$).
	
	Denote by $\mathfrak{n}$ (resp. $\mathfrak{n}'$) the maximal ideal of $S$
	(resp. of $S'$).
	For $i=1,\ldots, e$, let $\wwin_{\mathfrak{n}}(f_i)$ be the 
	weighted initial form of $f_i(z_i)$, the minimal polynomial of $\theta_i$ over $S$.
	Note that $f_i(z_i)$ is also the minimal polynomial of $\theta_i$ over $S'$.
	To justify this consider the following diagram
	$$\xymatrix@R=1pc{\widehat{B} \ar[r] & \widehat{B'} \\ 
		S[\theta_i] \ar[r]\ar[u] & S'[\theta_i]\ar[u] \\
		S \ar[r]\ar[u] & S'.\ar[u]}$$
	By Proposition \ref{CoeffenS}, $S[\theta_i]\cong S[z_i]/f_i(z_i)$,
	and $S'[\theta_i]=S[\theta_i]\otimes_S S'$.
	
	The image of $\wwin_{\mathfrak{n}}(f_i)\in \Gr_{\mathfrak{n}}(S)$ in 
	$\Gr_{\mathfrak{n}'}(S')$ is $\wwin_{\mathfrak{n}'}(f_i)$.
	Now we conclude, since 
	$\wwin_{\mathfrak{n}}(f_i)$ is an $m$-power in $\Gr_{\mathfrak{n}}(S)$
	if and only if
	$\wwin_{\mathfrak{n}'}(f_i)$ is an $m$-power in $\Gr_{\mathfrak{n}'}(S')$.
	  Here we are using the fact that
	$\Gr_{\mathfrak{n}'}(S')=\Gr_{\mathfrak{n}}(S)\otimes_k k'$ and
	the extension $k\to k'$ is \'etale (see \cite[Proposition 16.2.2]{EGAIV}, in fact flatness is enough to guarantee the isomorphism). Now, 
	the result follows from Remark \ref{MejoraWwin} and Corollary \ref{TrasladaS},  because: $$\SSl(\widehat{B})=\min\{\SSl(S[\theta_i])
	\mid i=1,\ldots,e\}.$$ }
\end{proof}

\section{Comparing slopes at prime ideals}
\label{seccion_comparing slopes at prime ideals}

As indicated in the Introduction, for a Noetherian ring $B$, the function
$$\begin{array}{rrcl}
\SSl: & \Spec(B) & \longrightarrow & \mathbb{Q}\cup\{\infty\} \\
 & \mathfrak{p} & \mapsto & \SSl(B_{\mathfrak{p}})
\end{array}$$
is not upper semicontinuous in general,
even after restricting to the top multiplicity locus of $B$.
This can be checked in the following example:

\begin{example}
\label{Whitney}
Let  $p\in {\mathbb Z}_{>0}$ be a prime number, and let
$B:={\mathbb F}_p[x,y_1,y_2]/\langle f\rangle$  
where $f=x^p-y_1^p y_2$.
Observe that ${\mathfrak p}=\langle \overline{x},\overline{y_1}\rangle$ determines a non-closed point
in $\Spec(B)$ of maximum multiplicity $p$.
It can be checked that $\SSl(B_{\mathfrak{p}})=\nub_{\p B_{\p}}(\overline{x})=1$.
However for every maximal ideal $\mathfrak{m}\supset\mathfrak{p}$ we have that
$\SSl(B_{\mathfrak{m}})=\nub_{\m}(\overline{x})=(p+1)/p$.
\end{example}

Observe that in the example $\SSl(B_{\mathfrak p})\leq\SSl(B_{\mathfrak{m}})$ for all maximal
ideals $\mathfrak{m}\supset\mathfrak{p}$.
In fact this will happen quite generally, as the following result states.

\begin{theorem} \label{ThSlopeSemicont}
Let $B$ be an equidimensional excellent ring containing a field and let
$\mathfrak{p}\in\Spec(B)$.
Then there is a dense open set $U\subset\MaxSpec(B/\mathfrak{p})$ such that
$$\SSl(B_{\mathfrak p})\leq\SSl(B_{\mathfrak{m}})\qquad
\text{for all }\quad \mathfrak{m}/\mathfrak{p}\in U.$$
\end{theorem}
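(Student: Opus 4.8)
The plan is to reduce the comparison to a single finite-transversal projection and then to run it through the pointwise estimate coming from Theorem \ref{casi_semi_continuo_sin_localizar}. First I would dispose of the easy cases. By Theorem \ref{ThSlopeReduced} (applied to $B_{\mathfrak p}$ and to each $B_{\mathfrak m}$, and using that the multiplicity stratification agrees on $B$ and $B_{\mathrm{red}}$) I may assume $B$ reduced; since the slope is always $\geq 1$, the case $\SSl(B_{\mathfrak p})=1$ is immediate, and if $\SSl(B_{\mathfrak p})=\infty$ then $B_{\mathfrak p}$ is regular, so openness of the regular locus of the excellent ring $B$ produces a dense open $U$ along which $B_{\mathfrak m}$ is regular and $\SSl(B_{\mathfrak m})=\infty$ too. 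Thus I may assume $1<\SSl(B_{\mathfrak p})<\infty$, i.e. $B_{\mathfrak p}$ is in the extremal case. Using upper semicontinuity of the multiplicity and generic regularity of the domain $B/\mathfrak p$, I would restrict to a dense open $U\subset\MaxSpec(B/\mathfrak p)$ on which $e_{B_{\mathfrak m}}(\mathfrak m B_{\mathfrak m})=e_{B_{\mathfrak p}}(\mathfrak p B_{\mathfrak p})$ and $B_{\mathfrak m}/\mathfrak p B_{\mathfrak m}$ is regular. For $\mathfrak m\in U$, applying Theorem \ref{casi_semi_continuo_sin_localizar} inside $B_{\mathfrak m}$ to the prime $\mathfrak p B_{\mathfrak m}$, combined with the trivial monotonicity $\nub_{\mathfrak p B_{\mathfrak m}}\leq\nub_{\mathfrak m B_{\mathfrak m}}$ from $\mathfrak p B_{\mathfrak m}\subseteq \mathfrak m B_{\mathfrak m}$, yields the key inequality
\[
\nub_{\mathfrak p B_{\mathfrak p}}(b)\leq \nub_{\mathfrak m}(b)\qquad\text{for all } b\in B_{\mathfrak m}.
\]

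Next I would set up a finite-transversal projection compatible with the whole family. After an \'etale extension and the reductions of \S\ref{setting_1} (which preserve the slope at every point by Proposition \ref{Prop_Slope_etale} and Theorem \ref{ThSlopeEtale}), I want a regular ring $S$ and a finite extension $S\subset B$ with $B=S[\theta_1,\dots,\theta_e]$, which is finite-transversal with respect to $\mathfrak m$ for every $\mathfrak m$ in a dense open of $V(\mathfrak p)$, and such that, writing $\mathfrak q=\mathfrak p\cap S$ and $\mathfrak n=\mathfrak m\cap S$, the quotient $S/\mathfrak q$ is regular. By Propositions \ref{extension_transversalidad} and \ref{transversal_intermedio}, $S_{\mathfrak q}\subset B_{\mathfrak p}$ is then finite-transversal with $B_{\mathfrak p}=S_{\mathfrak q}[\theta_1,\dots,\theta_e]$, and likewise $S_{\mathfrak n}\subset B_{\mathfrak m}$. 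Using Corollary \ref{TrasladaS} exactly as at the end of the proof of Theorem \ref{ThSlopeEtale}, both slopes decompose as
\[
\SSl(B_{\mathfrak p})=\min_{i}\SSl(S_{\mathfrak q}[\theta_i]),\qquad
\SSl(B_{\mathfrak m})=\min_{i}\SSl(S_{\mathfrak n}[\theta_i]),
\]
and for each $i$ there is $s_i^{*}\in S_{\mathfrak q}$ with $\SSl(S_{\mathfrak q}[\theta_i])=\nub_{\mathfrak p B_{\mathfrak p}}(\theta_i-s_i^{*})$, the last equality via Proposition \ref{transversal_intermedio}(iv).

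The comparison then becomes a matter of relocating the translations $s_i^{*}$ into the family. Writing $s_i^{*}=a_i/u_i$ with $a_i\in S$ and $u_i\in S\setminus\mathfrak q$, I would shrink $U$ so as to also avoid the proper closed set $V(\prod_i u_i)\cap V(\mathfrak q)$; then for $\mathfrak m\in U$ each $u_i$ is a unit in $S_{\mathfrak n}$, so $s_i^{*}\in S_{\mathfrak n}\subseteq B_{\mathfrak m}$ and $\theta_i-s_i^{*}\in B_{\mathfrak m}$. Feeding $b=\theta_i-s_i^{*}$ into the key inequality, and using that the slope of $S_{\mathfrak n}[\theta_i]$ is a supremum over such translations (Corollary \ref{TrasladaS} for $S_{\mathfrak n}\subset B_{\mathfrak m}$), gives
\[
\SSl(S_{\mathfrak q}[\theta_i])=\nub_{\mathfrak p B_{\mathfrak p}}(\theta_i-s_i^{*})\leq \nub_{\mathfrak m}(\theta_i-s_i^{*})\leq \SSl(S_{\mathfrak n}[\theta_i]).
\]
Taking the minimum over $i$ yields $\SSl(B_{\mathfrak p})\leq \SSl(B_{\mathfrak m})$ for all $\mathfrak m\in U$, and intersecting the finitely many dense opens used above keeps $U$ dense open in $\MaxSpec(B/\mathfrak p)$.

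The step I expect to be the main obstacle is the construction in the second paragraph. Unlike in the proofs of Theorems \ref{casi_semi_continuo} and \ref{main_theorem}, I cannot simply pass to the $\mathfrak m$-adic (or $\mathfrak p$-adic) completion, since completion collapses the family of closed points of $V(\mathfrak p)$ and thereby destroys the genericity used to clear the denominators $u_i$. Hence the finite-transversal projection must be produced over a base $S$ whose quotient $S/\mathfrak q$ retains a dense set of closed points (for instance, is essentially of finite type over a field, hence Jacobson), which in the general excellent setting demands a relative version of the finite-transversal constructions of \cite{V} and \cite{COA}, performed after a suitable \'etale extension and checked to preserve the slope both at $\mathfrak p$ and along $U$. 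I would also verify that a single system $\theta_1,\dots,\theta_e$ serves simultaneously for $\mathfrak p$ and for generic $\mathfrak m$, so that the two slope decompositions above are genuinely comparable. Finally, Example \ref{Whitney} is a useful consistency check: it shows the resulting inequality is inherently one-sided and cannot be strengthened to semicontinuity, confirming the direction of every estimate made here.
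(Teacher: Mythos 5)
Your overall architecture matches the paper's: reduce to a fixed $\mathfrak{m}$ in a dense open where the multiplicity is constant and $B/\mathfrak{p}$ is regular, decompose both slopes as $\min_i\SSl(S_{\bullet}[\theta_i])$ over a finite-transversal presentation, and compare via the pointwise inequality $\nub_{\mathfrak{p}}(\theta_i - s_i)\leq\nub_{\mathfrak{m}}(\theta_i-s_i)$ applied to optimally translated generators. But the step you flag as the main obstacle is a genuine gap, and your proposed workaround does not close it. You want to realize the optimal translations $s_i^{*}\in S_{\mathfrak{q}}$ inside $B_{\mathfrak{m}}$ by clearing denominators generically over the family $\MaxSpec(B/\mathfrak{p})$, which forces you to build a single finite-transversal projection valid along a dense open of $V(\mathfrak{p})$ over a base $S$ retaining a dense set of closed points in $V(\mathfrak{q})$. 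In the general excellent setting the paper's only construction of finite-transversal projections (\S\ref{setting_1}) goes through the $\mathfrak{m}$-adic completion (or ad hoc \'etale extensions), precisely the operation you correctly observe destroys the family; so the relative construction you postulate is not available, and the argument cannot be completed as written.

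The paper's resolution is Proposition \ref{PrimoRegTras}, which eliminates the denominators at the source rather than clearing them generically: since $B/\mathfrak{p}$ is regular, so is $S/\mathfrak{q}$ (Proposition \ref{extension_transversalidad}(ii)), hence $\Gr_{\mathfrak{q}}(S)$ and its localization $\Gr_{\mathfrak{q}S_{\mathfrak{q}}}(S_{\mathfrak{q}})=\Gr_{\mathfrak{q}}(S)\otimes K(S/\mathfrak{q})$ are both regular, in particular UFDs, and the weighted initial form $\wwin_{\mathfrak{q}}(f_i)$ is an $m$-th power in the first ring if and only if it is in the second. Running the iterative procedure of Remark \ref{RemWwin} therefore produces translations $s_i$ lying in $S$ itself, not merely in $S_{\mathfrak{q}}$, with $\nub_{\mathfrak{p}}(\theta_i-s_i)=\nub_{\mathfrak{p}B_{\mathfrak{p}}}(\theta_i-s_i)=\SSl(S_{\mathfrak{q}}[\theta_i])$. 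Once $s_i\in S$, the element $\theta_i-s_i$ is visible in $B$ at the single fixed $\mathfrak{m}$, the chain $\SSl(S[\theta_i])\geq\nub_{\mathfrak{m}}(\theta_i-s_i)\geq\nub_{\mathfrak{p}}(\theta_i-s_i)=\SSl(S_{\mathfrak{q}}[\theta_i])$ closes the argument, and no compatibility across the family is needed --- which is exactly what licenses the passage to the completion (via $\SSl(B_{\mathfrak{p}})\leq\SSl(\hat{B}_{\mathfrak{p}\hat{B}})$ and $\SSl(\hat{B})=\SSl(B)$) that your version had to forgo. If you add a proof of this descent statement for the translations, the rest of your outline goes through.
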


Before addressing the proof of the theorem we need an auxiliary result.

\begin{proposition} \label{PrimoRegTras}
Let $(S,\mathfrak{n})\to (B,\mathfrak{m})$ be a finite-transversal projection
of equicharacteristic local rings.
Suppose that $B=S[\theta]$ for some $\theta\in B$.
Let $\mathfrak{p}$ be a prime in $B$ such that $B/\mathfrak{p}$ is regular, and
$m=e_{B_{\mathfrak{p}}}(\p B_{\p})=e_B(\m)>1$.
Then there is some $s\in S$ such that:
$$\nub_{\mathfrak{p}}(\theta-s)=\nub_{\mathfrak{p}B_{\mathfrak{p}}}(\theta-s)
=\SSl(B_{\mathfrak{p}}).$$
\end{proposition}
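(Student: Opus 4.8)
The first equality costs nothing: since $\theta - s \in B$ for every $s \in S$, and $\mathfrak p$ lies in the top multiplicity locus of $B$ (because $e_{B_\mathfrak p}(\mathfrak p B_\mathfrak p) = e_B(\mathfrak m)$) with $B/\mathfrak p$ regular, Theorem~\ref{casi_semi_continuo_sin_localizar} gives $\nub_\mathfrak p(\theta - s) = \nub_{\mathfrak p B_\mathfrak p}(\theta - s)$ for free. So the real content is to produce a single $s \in S$ with $\nub_{\mathfrak p B_\mathfrak p}(\theta - s) = \SSl(B_\mathfrak p)$. Set $\mathfrak q = \mathfrak p \cap S$. By Proposition~\ref{extension_transversalidad}(i) the localized extension $S_\mathfrak q \subset B_\mathfrak p$ is finite-transversal with respect to $\mathfrak p$, and $B_\mathfrak p = S_\mathfrak q[\theta]$; moreover $B_\mathfrak p$ is reduced, equicharacteristic, equidimensional, excellent and, as $m>1$, non-regular, so $\SSl(B_\mathfrak p) < \infty$ by Theorem~\ref{main_theorem}. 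The plan is to run the iterative procedure of Remark~\ref{RemWwin} for $S_\mathfrak q \to B_\mathfrak p$ and $\theta$, while checking at each step that the translating element can be chosen in $S$ rather than merely in $S_\mathfrak q$.

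Concretely, let $f(z) = z^m + a_1 z^{m-1} + \cdots + a_m \in S[z]$ be the minimal polynomial of $\theta$ over $S$ (Proposition~\ref{CoeffenS}), and set $q = \nub_{\mathfrak p B_\mathfrak p}(\theta) = \min_j \nu_{\mathfrak q S_\mathfrak q}(a_j)/j$ (Theorem~\ref{Hickel_refinado}). By Proposition~\ref{MejoraWwin} a translation strictly raising $\nub_{\mathfrak p B_\mathfrak p}$ exists if and only if $\wwin_{\mathfrak q S_\mathfrak q}(f) = (z - \alpha)^m$ for some $0 \neq \alpha \in \Gr_{\mathfrak q S_\mathfrak q}(S_\mathfrak q)$. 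The key descent point is this: by Proposition~\ref{extension_transversalidad}(ii) the ring $S/\mathfrak q$ is regular, so the ordinary and symbolic powers of $\mathfrak q$ coincide, $\nu_\mathfrak q = \nu_{\mathfrak q S_\mathfrak q}$ on $S$, and $\Gr_{\mathfrak q S_\mathfrak q}(S_\mathfrak q)$ is a localization of $\Gr_\mathfrak q(S)$ sharing its fraction field, where $\Gr_\mathfrak q(S)$ is a polynomial ring over the regular local domain $S/\mathfrak q$, hence a normal domain. Because $a_j \in S$, the coefficients $A_j = \In_{\mathfrak q S_\mathfrak q}(a_j)$ of $\wwin_{\mathfrak q S_\mathfrak q}(f)$ already lie in $\Gr_\mathfrak q(S)$. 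Expanding $(z-\alpha)^m$: if $k$ has characteristic $0$, or characteristic $p$ with $p \nmid m$, then $A_1 = -m\alpha$ and $\alpha = -A_1/m \in \Gr_\mathfrak q(S)$; if $k$ has characteristic $p$ and $m = p^e m_0$ with $p \nmid m_0$, then $A_{p^e} = -m_0\,\alpha^{p^e}$, so $\alpha^{p^e} \in \Gr_\mathfrak q(S)$, and since $\alpha$ lies in the fraction field of the normal domain $\Gr_\mathfrak q(S)$ and is integral over it (a root of $X^{p^e} - \alpha^{p^e}$), again $\alpha \in \Gr_\mathfrak q(S)$. Thus $\alpha$ is the $\mathfrak q$-initial form of some $s_1 \in \mathfrak q^q \subseteq S$, and the converse half of Proposition~\ref{MejoraWwin} yields $\nub_{\mathfrak p B_\mathfrak p}(\theta - s_1) > q$.

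Now iterate: replacing $\theta$ by $\theta - s_1$, whose minimal polynomial $f(z + s_1)$ again lies in $S[z]$ since $s_1 \in S$, and repeating, we obtain $s_1, s_2, \ldots \in S$ along which $\nub_{\mathfrak p B_\mathfrak p}\bigl(\theta - (s_1 + \cdots + s_i)\bigr)$ strictly increases. As $\SSl(B_\mathfrak p) < \infty$, Remark~\ref{RemWwin} guarantees that after finitely many steps $N$ the weighted initial form ceases to be an $m$-th power, at which point $\nub_{\mathfrak p B_\mathfrak p}(\theta - s) = \SSl(B_\mathfrak p)$ with $s := s_1 + \cdots + s_N \in S$; together with the first paragraph this proves the statement. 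The \textbf{main obstacle} is exactly the descent carried out in the second paragraph: an element of $S_\mathfrak q$ realizing the slope (such as the one furnished by Corollary~\ref{TrasladaS}) need not be approximable by elements of $S$, since localizing at $\mathfrak q$ enlarges the residue field from $S/\mathfrak q$ to $k(\mathfrak q)$. What rescues the argument is that the particular initial forms $\alpha$ produced by an $m$-th-power $\wwin$ satisfy an integral equation over the \emph{normal} ring $\Gr_\mathfrak q(S)$, and hence are automatically defined over $S/\mathfrak q$.
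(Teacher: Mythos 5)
Your proof is correct and follows essentially the same route as the paper: both reduce the problem to descending the $m$-th root $\alpha$ of the weighted initial form from $\Gr_{\mathfrak{q}S_{\mathfrak{q}}}(S_{\mathfrak{q}})$ to $\Gr_{\mathfrak{q}}(S)$, using that $S/\mathfrak{q}$ is regular so that ordinary and symbolic powers of $\mathfrak{q}$ coincide and $\Gr_{\mathfrak{q}}(S)$ is a regular (hence normal, UFD) domain of which $\Gr_{\mathfrak{q}S_{\mathfrak{q}}}(S_{\mathfrak{q}})$ is a localization. Your explicit integrality argument (via $A_1$ or $A_{p^e}$ and normality) is just a spelled-out version of the paper's appeal to the UFD/localization structure, and your use of Theorem \ref{casi_semi_continuo_sin_localizar} for the first equality is a clean way to handle a point the paper leaves implicit.
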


\begin{proof}
Set $\mathfrak{q}=\mathfrak{p}\cap S$, and
let $f(z)\in S[z]$ be the minimal polynomial of $\theta$ over $S$,
$$f(z)=z^m+a_1z^{m-1}+\cdots+a_{m-1}z+a_m, \qquad a_i\in S.$$
By Proposition \ref{CoeffenS}(1), $f(z)$ is also the minimal polynomial of $\theta$
over $S_{\mathfrak{q}}$.
By Proposition \ref{CoeffenS}(2), $B=S[\theta]\cong S[z]/\langle f(z)\rangle$,
therefore the generic rank of the finite-transversal extension $S\subset B$
is $m=e_B(\m)$.
Since $e_{B_{\mathfrak{p}}}(\p B_{\p})=m$, and $S_{q}\subset B_{\mathfrak{p}}$ is 
finite-transversal (Proposition \ref{extension_transversalidad}(i))
the generic rank is also $m$ and also 
$B_{\mathfrak{p}}=S_{\mathfrak{q}}[\theta]\cong S_{\mathfrak{q}}[z]/\langle f(z)\rangle$.

By Corollary \ref{TrasladaS}, there is some $\tilde{s}\in S_{\mathfrak{q}}$ with
$\nub_{\mathfrak{p}B_{\mathfrak{p}}}(\theta-\tilde{s})
=\SSl(B_{\mathfrak{p}})$.
Remark \ref{RemWwin} indicates that $\tilde{s}$ can be obtained looking at the weighted initial
form $\wwin_{\mathfrak{q}S_{\mathfrak{q}}}(f(z))$.

Since $\mathfrak{p}$ is a regular prime in $B$, $\mathfrak{q}$ is a regular prime
in $S$ (see Proposition \ref{extension_transversalidad}(ii)). 
Consider the natural map
\begin{equation} \label{EqGrprimo}
\Gr_{\mathfrak{q}}(S)=\bigoplus_{i\geq 0}\mathfrak{q}^i/\mathfrak{q}^{i+1}
\to \Gr_{\mathfrak{q}S_{\mathfrak{q}}}(S_{\mathfrak{q}})=
\Gr_{\mathfrak{q}}(S)\otimes_{S/{\mathfrak{q}}} K(S/\mathfrak{q}).
\end{equation}
Note that $\wwin_{\mathfrak{q}S_{\mathfrak{q}}}(f(z))$ is the image of
$\wwin_{\mathfrak{q}}(f(z))$ by the map in (\ref{EqGrprimo}).
Then both rings in (\ref{EqGrprimo}) are regular, in particular they are
UFDs and the second is a localization of the first. Now it follows that 
$\wwin_{\mathfrak{q}}(f(z))$ is a $m$-th power if and only if
$\wwin_{\mathfrak{q}S_{\mathfrak{q}}}(f(z))$ is a $m$-th power.
Hence there is some ${s}$ in $S$ such that
$$\nub_{\mathfrak{p}}(\theta-s)=\nub_{\mathfrak{p}B_{\mathfrak{p}}}(\theta-s)
=\SSl(B_{\mathfrak{p}}).$$
\end{proof}

\begin{proof}[Proof of Theorem \ref{ThSlopeSemicont}]
Since $B$ is excellent, by \cite[Theorem 2.33]{CJS2020}, there exists a dense open set $U$ in
$\Spec(B/\mathfrak{p})$ such that $e_{B_{\mathfrak{p}}}(\p B_{\p})=e_{B_{\m}}(\m B_{\m})$
for every $\mathfrak{m}/\mathfrak{p}\in U$.
After shrinking $U$ if needed, we can also assume that $B/\mathfrak{p}$ is regular at all
maximal ideals in $U$.
Hence we can assume to be in the case where $B$ is the localization at some maximal ideal
$\mathfrak{m}/\mathfrak{p}\in U$.

By Proposition \ref{LemaResField} we may assume that the residue field of $B$ is infinite,
and by Theorem \ref{ThSlopeReduced} we may assume that $B$ is reduced.
By Proposition \ref{Prop_Slope_etale} and the arguments in \S\ref{setting_1}
we may assume that $B$ is complete: here we use the fact that 
$\SSl(B_{\p})\leq \SSl(\hat{B}_{\p\hat{B}})$ and $\SSl(\hat{B})=\SSl(B)$,
hence it suffices to prove that $\SSl(\hat{B}_{\p\hat{B}})\leq \SSl(\hat{B})$.

Again, by the arguments if \S\ref{setting_1} we have a finite-transversal
extension $S\subset B$.
Set ${\mathfrak q}:=\mathfrak{p}\cap S$. 
By Proposition \ref{extension_transversalidad} we have that
$S_{\mathfrak{q}}\to{B}_{\mathfrak{p}}$ is finite-transversal and
$S/\mathfrak{q}=B/\mathfrak{p}$.

By Proposition \ref{presentaciones_locales},
$\mathfrak{q}B$ is a reduction of $\mathfrak{p}\subset B$ and there are 
$\theta_1,\ldots,\theta_e\in\mathfrak{p}$ such that
$B=S[\theta_1,\ldots,\theta_e]$, 
$${\mathfrak m}={\mathfrak n}B+\langle \theta_1,\ldots,\theta_e\rangle,
\qquad\text{and}\qquad 
{\mathfrak p}={\mathfrak q}+\langle \theta_1,\ldots,\theta_e\rangle.$$

Now consider the commutative diagram:
$$\xymatrix@R=1pc{B=S[\theta_1,\ldots, \theta_e] \ar[r] &  S_{\mathfrak q}[\theta_1,\ldots,\theta_e]=B_{\mathfrak{p}} \\
 	S[\theta_i] \ar[u] \ar[r] & S_{\mathfrak q}[\theta_i] \ar[u]\\
 	S=k[|y_1,\ldots,y_d|] \ar[u] \ar[r] & S_{\mathfrak q}.\ar[u]    }$$
Set $\mathfrak{m}_i=\mathfrak{m}\cap S[\theta_i]$ and 
$\mathfrak{p}_i=\mathfrak{p}\cap S[\theta_i]$, for $i=1,\ldots,e$.

By Proposition \ref{PrimoRegTras}, there are some $s_i\in\mathfrak{p}$ such that 
for $i=1\ldots,e$,
$$\nub_{\mathfrak{p}}(\theta_i-s_i)=\nub_{\mathfrak{p}_i}(\theta_i-s_i)
=\SSl(S_{\mathfrak{q}}[\theta_i]).$$
Therefore, after translation by elements of $S$, we can assume that $\nub_{\p}(\theta_i)=\SSl(S_{\q}[\theta_i])$. On the other hand, note that
$$\SSl(B_{\mathfrak{p}})=\min\{\SSl(S_{\mathfrak{q}}[\theta_i])
\mid i=1,\ldots,e\}.$$
Now the result follows since for $i=1,\ldots,e$ we have:
$$\SSl(S[\theta_i])\geq\nub_{\mathfrak{m}}(\theta_i)\geq
\nub_{\mathfrak{p}}(\theta_i)
=\SSl(S_{\mathfrak{q}}[\theta_i])$$
and 
$$\SSl(B)=\min\{\SSl(S[\theta_i])
\mid i=1,\ldots,e\}.$$
\end{proof}

\noindent
\textbf{Competing interest.}
The authors have no competing interests to declare that are relevant to the content of this article.

\end{document}